\newtheorem{theorem}{Theorem}
\newtheorem{assumption}[theorem]{Assumption}
\newtheorem{corollary}[theorem]{Corollary}
\newtheorem{definition}[theorem]{Definition}
\newtheorem{lemma}[theorem]{Lemma}
\newtheorem{proposition}[theorem]{Proposition}
\theoremstyle{remark}
\newtheorem{remark}[theorem]{Remark}
\newtheorem{example}[theorem]{Example}
\numberwithin{equation}{section}
\numberwithin{theorem}{section}
\newcommand{\MBtext}[1]{{{\small\color{purple}{}}}}
\newcommand{\LRlong}[1]{{{\small\color{cyan}{}}}}
\newcommand{\LRtext}[1]{{{\small\color{cyan}{}}}}
\newcommand{\STtext}[1]{{{\small\color{green}{}}}}
\newcommand{\cL}{\mathcal{L}}
\newcommand{\cP}{\mathcal{P}}
\newcommand{\Pc}{\cP}
\renewcommand{\P}{\cP}
\newcommand{\law}{\text{law}}
\newcommand{\EE}{\mathbb E}
\newcommand{\F}{\mathcal F}
\newcommand{\cW}{\mathcal{W}}
\newcommand{\Leb}{\text{Leb}}
\newcommand{\R}{\mathbb{R}}
\newcommand{\N}{\mathbb{N}}
\newcommand{\cpl}{\mathrm{Cpl}}
\newcommand{\cplm}{\cpl_\mathrm{M}}
\newcommand{\dom}{\mathrm{dom}}
\newcommand{\OT}[3]{\mathrm{T}_#1(#2,#3)}
\newcommand{\WOT}[3]{\mathrm{WT}_#1(#2,#3)}
\newcommand{\EOT}[4]{\mathrm{ET}_{#1,#2}(#3,#4)}
\newcommand{\DW}[3]{\mathrm{D}_#1(#2,#3)}
\newcommand{\Dadm}[3]{\Phi_#1(#2,#3)}
\newcommand{\eps}{\varepsilon}
\renewcommand{\epsilon}{\varepsilon}
\newcommand{\mean}[1]{\mathrm{mean}(#1)}
\newcommand{\spt}{\mathrm{spt}}
\newcommand{\W}{\mathcal{W}}
\newcommand{\MCov}{\mathrm{MCov}}
\newcommand{\id}{\textup{id}}
\newcommand{\pphi}{f}
\newcommand{\ppsi}{g}
\title{The fundamental theorem of weak optimal transport}
\author{M.\ Beiglböck, G.\ Pammer, L.\ Riess, S.\ Schrott}
\date{\today}
\begin{document}

\begin{abstract}
The fundamental theorem of classical optimal transport establishes strong duality and characterizes optimizers through a complementary slackness condition. Milestones such as Brenier's theorem and the Kantorovich-Rubinstein formula are direct consequences. 

In this paper, we generalize this result to non-linear cost functions, thereby establishing a fundamental theorem for the weak optimal transport problem introduced by Gozlan, Roberto, Samson, and Tetali. As applications we provide concise derivations of the Brenier--Strassen theorem, the convex Kantorovich--Rubinstein formula and the structure theorem of entropic optimal transport. 
We also extend Strassen's theorem in the direction of Gangbo--McCann's transport problem for convex costs. 
Moreover, we determine the optimizers for a new family of transport problems which contains the Brenier--Strassen, the martingale Benamou--Brenier and the entropic martingale transport problem as extreme cases.
\medskip

\emph{keywords:} weak optimal transport, Gangbo-McCann theorem, dual attainment, martingale optimal transport, entropic optimal transport, adapted weak topology
\end{abstract}
\maketitle

\section{Introduction}\label{sec:intro}

Optimal transport (OT) provides a powerful theory with applications in numerous fields. However, there is a range of problems that cannot be addressed within this framework due to non-linearities in the cost function. This motivated Gozlan, Roberto, Samson and Tentali \cite{GoRoSaTe14} to introduce the more encompassing framework of \emph{weak optimal transport} (WOT), see also the 
closely related works of Aliberti, Bouchitte and Champion \cite{AlBoCh18} as well as Bowles and Ghoussoub \cite{BoGh18}.
Our goal is to extend the basic structure theorem of transport, the \emph{fundamental theorem of optimal transport} (see e.g.\ \cite[Theorem 1.13]{AmGi13}), to the WOT framework. To set the stage for this result and its applications, we briefly recount the classical setup.

Given a Polish metric space $Z$, we denote by $\cP(Z)$ the set of probabilities on $Z$, equipped with weak convergence and by $\cP_p(Z)$, $p\in [1,\infty)$ probabilities with finite $p$-moment with $p$-weak convergence. 
Throughout we consider Polish probability spaces $(X, \mu)$, $(Y,\nu)$. We write $\cpl(\mu, \nu)$ for the set of all \emph{couplings} or \emph{transport plans}, that is, probabilities on \(X \times Y\) which have \(X\)-marginal \(\mu\) and \(Y\)-marginal \(\nu\). As customary, we assume that the `\emph{cost function}' $c:X\times Y\to [0,\infty]$ is lsc, implying attainment for the (primal) \emph{Monge-Kantorovich transport problem}
\begin{align}\label{eq:PrimalOT}
   {\rm T}_c(\mu, \nu):= \inf_{\pi \in \cpl(\mu, \nu)}\int c(x,y)\, \pi(dx, dy). 
\end{align}
In key applications we are often in the situation that
$X=Y$ and $c$ relates to the metric $d_X$ of $X$: 
E.g.\ \emph{Kantorovich-Rubinstein} \cite{KaRu58} asserts that for $\mu, \nu \in \cP_1(X)$ and $c=d_X$ we have ${\rm T}_{d_X}(\mu, \nu)= \max_{\text{$\psi$ $1$-Lip}} \nu(\psi)-\mu(\psi).$

Likewise \emph{Brenier's theorem} \cite{Br91} asserts that for $X=Y=\R^d$, the squared distance cost $c(x,y)= |x-y|^2$ and $\mu, \nu\in \cP_2(\mu, \nu)$, $\mu\ll \Leb$ the transport problem admits a unique solution. Moreover, the optimizer is characterized as the only admissible transport plan supported by the graph of a convex function. 

Both results are consequences of the basic structure theorem of optimal transport: 

\begin{theorem}[Fundamental Theorem of OT]\label{thm:FTOT}
For $c:X\times Y\to [0,\infty]$ lsc, the primal problem ${\rm T}_c(\mu, \nu)$ is attained and equals the dual problem
\begin{align}
    {\rm D}_c(\mu, \nu):= \sup\{\mu(\pphi)+\nu(\ppsi): \pphi(x) + \ppsi(y) \leq c(x,y), \, \pphi\in \mathcal L^1(\mu), \, \ppsi \in \mathcal L^1(\nu)\}. 
\end{align}
The dual problem is attained if $c$ is upper bounded in the sense that 
\begin{align}\label{eq:OTbound} \tag{{\sf A}}
    c(x,y)\leq a(x) + b(y) \quad \text{ for some } a\in \mathcal L^1(\mu), \, b \in \mathcal L^1(\nu).
\end{align}
Candidates $\pi, (\pphi, \ppsi)$ are optimal if and only if they satisfy the complementary slackness condition 
\begin{align}
    c(x,y)= \pphi(x)+\ppsi(y), \quad \pi\text{-a.s.}
\end{align}    
\end{theorem}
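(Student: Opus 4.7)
The plan is to follow the classical four-step strategy: primal attainment, weak duality, strong duality (from which the complementary slackness characterization follows by a sandwich argument), and finally dual attainment under hypothesis \eqref{eq:OTbound}. \emph{Primal attainment and weak duality:} since its marginals are fixed, $\cpl(\mu,\nu)$ is tight and hence sequentially compact for weak convergence by Prokhorov; as $c\ge 0$ is lsc, Portmanteau yields that $\pi\mapsto\int c\, d\pi$ is lsc on $\cP(X\times Y)$, so a minimizing sequence has a weakly convergent subsequence whose limit attains ${\rm T}_c(\mu,\nu)$. Weak duality ${\rm T}_c\ge {\rm D}_c$ then follows by integrating the admissibility inequality $\pphi(x)+\ppsi(y)\le c(x,y)$ against any transport plan.

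\emph{Strong duality.} For bounded continuous $c$ I would apply Fenchel--Rockafellar to the sublinear functional
\[
\Theta(u):=\inf\{\mu(\pphi)+\nu(\ppsi)\,:\,\pphi\in C_b(X),\,\ppsi\in C_b(Y),\,\pphi(x)+\ppsi(y)\ge u(x,y)\}
\]
on $C_b(X\times Y)$: its convex conjugate equals $0$ on $\cpl(\mu,\nu)$ and $+\infty$ elsewhere, so dualizing at $u=c$ gives the desired equality. For general lsc $c:X\times Y\to[0,\infty]$, I would approximate from below by bounded continuous $c_n\uparrow c$ (for instance via Moreau--Yosida Lipschitz regularization followed by truncation), apply the bounded case to each $c_n$, and pass to the limit: monotone convergence handles the primal side, while monotonicity of the dual values in $n$ handles the dual.

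\emph{Dual attainment under \eqref{eq:OTbound} and complementary slackness.} Here I would take the $c$-cyclical monotonicity route. A primal optimizer $\pi$ must be supported on a $c$-cyclically monotone set, since otherwise a mass swap along a bad finite cycle would yield a strictly cheaper competitor. Rockafellar's construction on a countable dense subset of this support then produces a $c$-concave potential $\pphi$ satisfying $\pphi(x)+\pphi^c(y)=c(x,y)$ on $\spt\pi$; the envelope \eqref{eq:OTbound} provides, after additive normalization, $\pphi\le a\in \mathcal L^1(\mu)$ and $\pphi^c\le b\in \mathcal L^1(\nu)$, while admissibility gives compatible lower bounds ensuring integrability. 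The sandwich
\[
{\rm T}_c(\mu,\nu)=\int c\, d\pi=\int\bigl(\pphi(x)+\pphi^c(y)\bigr)\, d\pi(x,y)=\mu(\pphi)+\nu(\pphi^c)\le {\rm D}_c(\mu,\nu)
\]
then collapses to equality and yields dual attainment. The same sandwich, applied now to arbitrary admissible $(\pphi,\ppsi)$ and $\pi$, gives the complementary slackness characterization: equality throughout forces $\pi$-a.s.\ equality in $\pphi(x)+\ppsi(y)\le c(x,y)$, and conversely this $\pi$-a.s.\ equality combined with admissibility certifies optimality.

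The main obstacle is the passage from bounded continuous to lsc $[0,\infty]$-valued cost in the strong-duality step, where the potentials obtained from the truncations $c_n$ need not remain controlled as $n\to\infty$. Under \eqref{eq:OTbound} the envelope $a\oplus b$ provides precisely the $\mathcal L^1$-bound required to extract a limiting dual maximizer; without it only equality of values survives, and this is exactly what singles out \eqref{eq:OTbound} as the sharp hypothesis for dual attainment in the unbounded regime.
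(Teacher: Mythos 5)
The paper does not supply its own proof of \Cref{thm:FTOT}; it is stated as classical background, with a pointer to the literature, and is then \emph{generalized} in \Cref{thm:FTWOTintro}. Your outline is the standard classical route (Prokhorov for primal attainment, Fenchel--Rockafellar plus monotone approximation for strong duality, $c$-cyclical monotonicity and Rockafellar's potential construction plus the sandwich argument for dual attainment and complementary slackness), and at the level of a sketch it is correct. A couple of points deserve flagging in the lsc, $[0,\infty]$-valued regime: the naive ``mass swap'' argument yields concentration of an optimal $\pi$ on \emph{some} $c$-cyclically monotone Borel set rather than $c$-cyclical monotonicity of $\spt\pi$ itself (for merely lsc $c$ the closure of a cyclically monotone set need not be cyclically monotone, since the cyclical inequality involves $c$ on both sides); and Rockafellar's construction from a countable subset needs \eqref{eq:OTbound} not only for integrability of the output but already to prevent the potential from degenerating to $-\infty$. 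These are precisely the gaps that the references you would cite (Ambrosio--Gigli, Villani) handle.

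It is worth contrasting this with the proof the paper actually carries out for the generalization \Cref{thm:FTWOT_mainbody}, which specializes to \Cref{thm:FTOT} by taking $C(x,\rho)=\int c(x,y)\,\rho(dy)$. The paper does \emph{not} argue via Fenchel--Rockafellar or via $c$-cyclical monotonicity. For primal attainment and strong duality it appeals to lower semicontinuity of the weak-transport functional under stable convergence on an enriched space (\Cref{thm:noGap}, built on \cite{BaBePa18}); for dual attainment it takes a maximizing sequence $(\pphi_n,\ppsi_n)$, derives an $L^1(\mu)$ bound on $(\pphi_n)$ directly from the upper envelope, establishes uniform integrability of $(\ppsi_n^+)$ via a de la Vallée Poussin estimate (\Cref{lemma:uniform_integrability_positive_part_max_seq}), and then extracts a limit with Koml\'os' lemma; complementary slackness follows from the sandwich argument as in your sketch. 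So the two routes genuinely diverge at the dual-attainment step: you go through the geometry of the optimizer's support (cyclical monotonicity plus an explicit Rockafellar potential), while the paper works purely with compactness of the maximizing sequence of potentials. The compactness route has the advantage of transferring to the nonlinear weak-transport setting, where there is no useful analogue of $c$-cyclical monotonicity from which a Rockafellar construction could be launched; the cyclical-monotonicity route has the advantage of producing a $c$-concave maximizer with additional structure, which is useful for applications like Brenier's theorem but is not pursued in the paper for the classical case.
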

Given a measurable \emph{cost function} $C:X\times \P_p(Y)\to [0,\infty]$ which is lsc and convex in the second argument, and denoting the $\mu$-disintegration of $\pi$ by $(\pi_x)_{x\in X}$,
the weak transport problem is 
\begin{align}\label{eq:primalWOT}
  {\rm WT}_C(\mu, \nu)=\inf_{\pi \in \cpl(\mu, \nu)}\int C(x,\pi_x)\, \mu(dx).   
\end{align}
The classical transport problem corresponds to the case where $\rho\mapsto C(x,\rho)$ is even linear, i.e.
\begin{align} \label{eq:linearC}
    C(x, \rho)= \int c(x,y) \, \rho(dy).
\end{align}
To establish the desired counterpart of Theorem \ref{thm:FTOT} we need the \emph{boundedness condition} 
\begin{align}\label{eq:WOTbound} \tag{{\sf{B}}}
    C(x,\rho) \le a(x) + \rho (b)  + \int h\Big( \frac{ d\rho}{d\nu}\Big)d\nu,  
\end{align} for some $a \in \mathcal L^1(\mu), b \in \mathcal L^1(\nu)$ and a convex increasing function $h : [0,\infty)\to [0,\infty)$, 
    where the right-hand side of \eqref{eq:WOTbound} is understood as $+\infty$ if $\rho$ is not absolutely continuous w.r.t.\ $\nu$. Apparently \eqref{eq:OTbound} implies \eqref{eq:WOTbound} if $C$ is of transport type as in \eqref{eq:linearC}. 
    Some boundedness assumption is necessary to obtain dual attainment in optimal transport, see e.g.\ \cite[Examples 4.4, 4.5]{BeSc11}, and clearly these examples also pertain to weak transport.

We also need that for increasing sequences $(Y_k)_k$ of Borel sets with $\bigcup_k Y_k=Y$ 
\begin{align}\label{eq:WOTcont} \tag{{\sf{C}}}
    C(x, \rho) \ge \limsup_k C(x, (\rho|_{Y_k}) / \rho(Y_k)).
\end{align}
This is as a mild \emph{continuity condition}, trivially satisfied in the classical setting \eqref{eq:linearC} and our intended applications. 
However, see Example \ref{exmpl:asmp_WOTcont} for the necessity of some continuity condition. 

\medskip

With these preparations, we can now state our main theorem: 
\begin{theorem}[Fundamental Theorem of WOT]\label{thm:FTWOTintro}
If $C:X\times \P_p(Y)\to [0,\infty]$ is measurable with $\rho\mapsto C(x,\rho)$ being convex and lsc for the $p$-weak convergence on $\P_p(Y)$, then $\WOT{C}{\mu}{\nu}$ is attained and equals the dual problem\footnote{Specifically we require the inequality in \eqref{eq:WOTdual_intro} for all $(x,\rho) \in X \times \P_p(Y)$ with $\ppsi \in \mathcal L^1(\rho)$.}
\begin{align}\label{eq:WOTdual_intro}
    {\rm D}_C(\mu, \nu):= \sup\{\mu(\pphi)+\nu(\ppsi): \pphi(x) + \rho(\ppsi) \leq C(x,\rho), \, \pphi\in \mathcal L^1(\mu), \, \ppsi \in \mathcal L^1(\nu)\}. 
\end{align}
Moreover, if $C$ satisfies conditions \eqref{eq:WOTbound} and \eqref{eq:WOTcont},
then the dual problem is attained. A pair of candidates $(\pi, (\pphi, \ppsi))$ is optimal if and only if it satisfies the complementary slackness condition 
\begin{align}\label{eq:WOTCS1}
    C(x,\pi_x)= \pphi(x)+\pi_x(\ppsi), \quad \mu\text{-a.s.}
\end{align}    
\end{theorem}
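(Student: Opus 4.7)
My plan is to decompose the statement into four parts — primal attainment, reformulation of the dual and weak duality, absence of a duality gap, and dual attainment — with complementary slackness falling out as a direct consequence of the last.

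For primal attainment, I would use that $\cpl(\mu, \nu)$ is weakly compact in $\cP(X \times Y)$ together with weak lower semi-continuity of $\pi \mapsto \int C(x, \pi_x)\,\mu(dx)$. The latter follows from convexity and $p$-weak lsc of $\rho \mapsto C(x,\rho)$: convexity permits reducing, via averaging, to the case of $\mu$-a.e.\ convergent disintegrations along a suitable subsequence, after which $p$-weak lsc gives the pointwise bound that feeds into Fatou's lemma.

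Next, I would introduce the weak $C$-transform
\[\pphi^\ppsi(x) := \inf_{\rho \in \cP_p(Y)}\bigl(C(x,\rho) - \rho(\ppsi)\bigr),\]
so that ${\rm D}_C(\mu, \nu) = \sup_\ppsi\{\mu(\pphi^\ppsi) + \nu(\ppsi)\}$; weak duality ${\rm D}_C \le \WOT{C}{\mu}{\nu}$ is immediate by integrating the pointwise dual constraint against any $\pi \in \cpl(\mu, \nu)$. The absence of a duality gap is the first main step; I would obtain it via Fenchel--Rockafellar by writing the primal as $\inf_\pi[F(\pi) + \chi_{\cpl(\mu, \nu)}(\pi)]$ with $F(\pi):=\int C(x,\pi_x)\,\mu(dx)$ convex lsc, checking a qualification condition (satisfied whenever the primal is finite), and noting that the conjugate of $\chi_{\cpl(\mu, \nu)}$ on separable test functions $\pphi \oplus \ppsi$ returns $\mu(\pphi) + \nu(\ppsi)$, which yields the identity ${\rm D}_C = \WOT{C}{\mu}{\nu}$. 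A more hands-on alternative is to first establish strong duality for bounded continuous $C$ via Sion's minimax on the compact set $\cpl(\mu, \nu)$, and then approximate a general $C$ by convex lsc bounded truncations before passing to the limit.

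The second main step, dual attainment under \eqref{eq:WOTbound} and \eqref{eq:WOTcont}, is where I expect the main obstacle. Starting from a maximizing sequence $(\ppsi_n)$, the translation symmetry $(\pphi, \ppsi) \mapsto (\pphi + c, \ppsi - c)$ permits normalization, and \eqref{eq:WOTbound} — via its Young-function term $h$ — provides two-sided $\cL^1(\nu)$-type control on $(\ppsi_n)$. Applying Komlós's theorem yields Cesàro averages $\bar\ppsi_n$ converging $\nu$-a.s.\ to some $\ppsi^\ast$; convexity of $C(x,\cdot)$ preserves dual-admissibility along these averages, and \eqref{eq:WOTcont} is precisely the ingredient needed to pass to the limit in the $C$-transforms $\pphi^{\bar\ppsi_n}$: it localizes the infimum defining $\pphi^\ppsi$ to measures $\rho$ concentrated on an exhausting sequence of sets $Y_k$ on which $\ppsi^\ast$ is bounded, where the limit behaves well. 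Once $(\pphi^{\ppsi^\ast}, \ppsi^\ast)$ are identified as optimizers, complementary slackness is automatic: from $\int C(x, \pi_x)\,\mu(dx) = \mu(\pphi) + \nu(\ppsi) \le \int (\pphi(x) + \pi_x(\ppsi))\,\mu(dx)$ and the pointwise inequality $\pphi(x)+\pi_x(\ppsi)\le C(x,\pi_x)$, one gets $\mu$-a.s.\ equality; the converse direction follows from weak duality.
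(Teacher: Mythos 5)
Your overall decomposition --- primal attainment, reformulation via the $C$-transform, no duality gap, dual attainment, complementary slackness as corollary --- mirrors the paper's structure (Theorems 2.5, 2.7, Proposition 2.6), and your dual-attainment sketch is essentially the paper's: Komlós on a normalized maximizing sequence, uniform integrability of $(\ppsi_n^+)$ via a Young/Orlicz estimate derived from \eqref{eq:WOTbound}, Egorov to get uniform convergence on an exhausting sequence of compacts, and \eqref{eq:WOTcont} to pass the admissibility constraint to the limit for measures $\rho$ concentrated on $\bigcup K_N$ (with $\ppsi$ redefined to $-\infty$ off this set so that the remaining $\rho$'s are excluded by the domain condition $\ppsi\in\cL^1(\rho)$). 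One small misattribution: admissibility is preserved under Cesàro averaging because the constraint $\pphi(x)+\rho(\ppsi)\le C(x,\rho)$ is \emph{affine in $(\pphi,\ppsi)$}, not because of convexity of $C(x,\cdot)$. Your primal-attainment step is also pointed in the right direction but is hand-wavy at its core: weak compactness of $\cpl(\mu,\nu)$ is standard, but weak lower semicontinuity of $\pi\mapsto\int C(x,\pi_x)\,\mu(dx)$ is the nontrivial point --- $\pi\mapsto\pi_x$ is discontinuous for plain weak convergence --- and ``convexity permits reducing via averaging to a.e.\ convergent disintegrations'' is the statement of a theorem rather than a proof. The paper handles this by passing to the adapted weak / stable topology (on the slice of $\cP_p(X\times Y)$ with fixed $X$-marginal $\mu$, stable convergence coincides with weak convergence) and invoking \cite[Lemma~A.10, Proposition~A.12(d)]{BeJoMaPa21b}, where convexity of $C(x,\cdot)$ is exactly what makes $I_C$ lsc.

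The genuine gap is in the no-duality-gap step. Your primary route, Fenchel--Rockafellar with $F(\pi)=\int C(x,\pi_x)\,\mu(dx)$ and $G=\chi_{\cpl(\mu,\nu)}$, rests on the claim that the constraint qualification is ``satisfied whenever the primal is finite.'' That is not a valid qualification: Fenchel--Rockafellar in infinite dimensions requires something like continuity of one summand at a point of the common domain, or an interiority/closed-range condition on $\dom F - \dom G$, and in the present setting $F$ is only lsc (indeed, the paper does not even assume joint lsc of $C$) and neither domain has interior in any natural function-space topology, so finiteness of the primal buys nothing. Your fallback --- Sion's minimax for bounded continuous $C$ on the compact set $\cpl(\mu,\nu)$ followed by truncation/approximation --- is a legitimate strategy (it is essentially how \cite{GoRoSaTe14} proceed), but the approximation step for $[0,\infty]$-valued $C$ that is merely Borel jointly and lsc only in $\rho$ is a substantial argument, not a remark. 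The paper instead proves no-gap \emph{and} primal attainment simultaneously by showing that the value function $\nu\mapsto\WOT{\tilde C}{\mu}{\nu}$ is $\hat\W_p$-lsc (for an auxiliary metric on $Y$ rendering the lower bound $b_\ell$ continuous) and then applying the Fenchel--Moreau biconjugate identity to this convex lsc function, following \cite[Theorem~3.1]{BaBePa18}. That route sidesteps any constraint qualification entirely; if you want to keep a minimax/Fenchel flavor you would have to replace the ``primal finite'' qualification with a real one, or carry out the truncation argument in detail.
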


Note that analogous to the classical transport problem we can rewrite \eqref{eq:WOTdual_intro} as
\begin{align*} {\rm D}_C(\mu, \nu)= & \sup\{\mu(g^C) +\nu(g): g^C \in \mathcal L^1(\mu), \, g\in \mathcal L^1(\nu)\}
\\ 
\text{where}\      g^C (x) := &\inf \{ C(x,\rho) - \rho(g) : \rho \in \cP_p(Y) \text{ s.t.\ } g \in \cL^1(\rho) \}.
\end{align*}
In fact, the core of utilizing \Cref{thm:FTWOTintro} is the computation of the $C$-transform $g^C$ for chosen instances of $C$.

\subsection{Applications of the Fundamental Theorem of WOT}
Next we give an overview of our applications of Theorem~\ref{thm:FTWOTintro} which consist in shortened, unified derivations of old results as well as of new results following the spirit of Kantorovich-Rubinstein / Brenier's theorem.

\subsubsection{Convex Kantorovich--Rubinstein}
Given $\mu, \nu\in \P_1(\R^d)$, \cite{GoRoSaTe15, AlBoCh18} derive the following \emph{convex Kantorovich--Rubinstein formula} 
\begin{align}\label{eq:coKaRu}
  \inf_{\pi \in \cpl(\mu, \nu)}\int |x-\mean{\pi_x}|\, \mu(dx) = \max_{\text{$\psi$ convex, $1$-Lip}} \mu(\psi)-\nu(\psi). 
\end{align}
A transport plan $\pi$ is called a martingale coupling if it is the law of a two-step martingale, that is, if $\mu$-a.s.\ $\mean{\pi_x}=x$. A classical result of Strassen \cite{St65} asserts that the set $\cplm(\mu, \nu)$ of martingale couplings is non-empty if and only if $\mu, \nu$ are in convex order $\leq_c$, i.e.\ $\mu(\psi)\leq \nu(\psi)$ for all convex $\psi$. 
The convex Kantorovich-Rubinstein formula can be seen as quantitative extension of Strassen's theorem: the left hand side of \eqref{eq:coKaRu} measures how close we are to finding a martingale coupling of $\mu$ and $\nu$, while the right hand side measures by how much $\mu, \nu$ violate the convex order condition.

\subsubsection{The Brenier--Strassen theorem}
Given $\mu, \nu\in \Pc_2(\R^d)$,
Gozlan--Julliet \cite{GoJu18} establish a Brenier--Strassen theorem for the weak transport problem 
\begin{align}\label{eq:coBrSt}
  \inf_{\pi \in \cpl(\mu, \nu)}\int |x-\mean{\pi_x}|^2\, \mu(dx). 
\end{align}
It asserts that there exists a $\mu$-a.s.\ unique map $T:\R^d\to \R^d$ such that for every optimizer $\pi \in \cpl(\mu, \nu)$, $T(x)= \mean{\pi_x}$. Moreover $T$ is $1$-Lipschitz, the gradient of a convex function and optimizers of \eqref{eq:coBrSt} are characterized by
\begin{align}\label{eq:BrStoptimizers}
\pi\in \cpl(\mu, \nu) \textrm{ is optimal } \Longleftrightarrow \   \textrm{ $\pi_x= \kappa_{T(x)}$, $\mu$-a.s.\ for some $\kappa\in \cplm(T_\#\mu, \nu).$}
\end{align}
We emphasize that in contrast to Brenier's theorem, no regularity of the measure $\mu$ is required beyond the existence of finite second moment. 
In \cite{GoJu18}, the Brenier--Strassen theorem is 
established based on duality for compactly supported probabilities and using approximation arguments to pass to the general case, see also \cite{BaBePa19}.

\subsubsection{A Gangbo--McCann--Strassen theorem}
In Theorem \ref{thm:StGaMc} below, we establish a weak transport version of Gangbo--McCann's result \cite{GaMc96} which recovers the convex Kantorovich--Rubinstein formula as well as the Brenier--Strassen theorem as special cases. Specifically we consider for convex $\vartheta : \R^d\to [0,\infty)$ the problem
\begin{align}
    \label{eq:coGaMcSt}
  \inf_{\pi \in \cpl(\mu, \nu)}\int \vartheta (x-\mean{\pi_x})\, \mu(dx). 
\end{align}
We show that if $\vartheta $ is strictly convex, there exists a unique $T$ such that for $\pi$ optimal $T(x)= \mean{\pi_x}$, $\mu$-a.s. Moreover the set of optimizers is characterized as in \eqref{eq:BrStoptimizers}. 

On the other hand, if $\vartheta $ is positively homogeneous, (i.e.\ a support function), we obtain an extended convex Kantorovich--Rubinstein formula. Beyond \eqref{eq:coKaRu} it implies for instance an increasing convex Kantorovich--Rubinstein formula, giving a quantitative version of Strassen's result for sub-martingales / the increasing convex order (see Corollary \ref{cor:hSpt}).  

We note that barycentric costs can be seen as a relaxation of the martingale condition which appears naturally from mathematical finance considerations.  Here the martingale condition for transport plans corresponds to a strict No-Arbitrage paradigm, this is a robust version of the `fundamental theorem of asset pricing', e.g.\ \cite{HoNe12, AcBePeSc13, WiZh22}. Based on \eqref{eq:coKaRu} and \eqref{eq:coGaMcSt} we obtain quantitative extensions: The level of arbitrage that can be locked in under a given trading restriction is reflected by a corresponding relaxation of the martingale condition, see Section \ref{ssec:Finance_interpretation}.

\subsubsection{Entropic optimal transport}

Given probabilities $\mu, \nu$ on Polish spaces $X,Y$, a measurable cost function $c:X\times Y\to [0,\infty)$ which is bounded by the sum of two integrable functions, and $\eps >0$ the entropic transport problem reads
\begin{align}\label{eq:EOTWOT_intro}
  \inf_{\pi \in \cpl(\mu,\nu)} \int c\, d\pi + \eps H ( \pi | \mu \otimes \nu),  
\end{align}
where $H$ denotes relative entropy, i.e.\
$H(Q | R ) = \int \log\big( \frac{dQ}{dR} \big) \,d Q$ if $Q, R$ are probability measures with $Q\ll R$ and $H(Q | R ) =\infty $ otherwise. See \cite{Le13, Nu22} for surveys on entropic optimal transport (EOT) and the closely related Schrödinger problem.

Observing that $H ( \pi | \mu \otimes \nu) = \int H(\pi_x|\nu)\,  \mu(dx)$, we notice that \eqref{eq:EOTWOT_intro} is a weak transport problem.   
Based on Theorem \ref{thm:FTWOTintro} we recover the main structure theorem of EOT: \eqref{eq:EOTWOT_intro} admits a unique solution $\pi$ and among all couplings of $\mu$ and $\nu$ the optimizer is characterized by   
\begin{align}\label{eq:proddensity_intro}
    \frac{d\pi}{d\mu \otimes \nu}(x,y) = 
    \exp\left(- \frac{c(x,y) + \pphi(x) + \ppsi(y) }{\eps}\right).
\end{align}
 In this case $(\pphi,\ppsi)$ is optimal in the dual problem \eqref{eq:EOTWOTdual}.

Along the same lines we also prove the structure theorem for transport costs regularized by general convex functions, thus recovering a result of \cite{DiGe20}, see Section \ref{sec:RWGCF}.

\subsubsection{Relaxation of martingale transport}
Weak martingale optimal transport (WMOT) differs from classical transport in that one minimizes over martingale couplings $\cplm(\mu, \nu)$ for $\mu,\nu\in\cP_p(\R^d)$. This restriction can be incorporated by considering weak transport costs of the form 
\begin{align*}%\label{eq:WMOTcostintro}
    C (x,\rho) =\begin{cases}
        \widehat C(\rho) & \mean{\rho} = x\\
        \infty & \text{else.}
    \end{cases}
\end{align*}
Here $C $ is lsc and convex in the second argument provided that $\widehat C:\P_p(\R^d)\to [0,\infty]$ is lsc and convex on all fibers $\{\rho\in \P_p(\R^d): \mean{\rho}=x\}, x\in \R^d$. However, $C $ 
does not satisfy the boundedness assumption \eqref{eq:WOTbound}. In fact dual attainment for martingale transport fails even in very regular settings (e.g.\ \cite[Section 4.3]{BeHePe12}). Positive results are only available for $d=1$ and under strong assumptions (\cite{BeLiOb19, BeNuTo16}). In consequence, important questions such as the entropic martingale transport and the martingale Benamou--Brenier problem are only partially understood. 

In \Cref{sec:RWMT} we show that these challenges can be overcome by applying \Cref{thm:FTWOTintro} to a suitable relaxation. In particular, if $\widehat C$ satisfies appropriate counterparts to \eqref{eq:WOTbound} and \eqref{eq:WOTcont}, we establish dual attainment for WMOT, 
which is relaxed using a term $\beta \W_2^2$, where $\beta >0$: 
\begin{align}\label{eq:RWMOTintro}
   \min_{\eta\in \P_2(\R^d)} \Big( \beta \W^2_2(\mu, \eta)  +  \min_{\kappa\in \cplm(\eta, \nu)} \int C (x,\pi_x)\, \mu(dx) \Big) = \max_{g\in \mathcal L^1(\nu)} \mu\big(%\beta^{-1} 
  \tfrac1\beta |.|^2 \Box g^{C }\big) + \nu\big(g\big).
\end{align}

We note that while the dual uses the classical infimal convolution / Yosida regularization $(\tfrac1\beta |.|^2 \Box f)(x)=\inf_{y\in\R^d} \tfrac1\beta |x-y|^2 + f(y)$, the primal side is an infimal convolution with $\beta \W^2_2$.

We continue with discussing applications of \eqref{eq:RWMOTintro} to martingale transport:

\subsubsection{From Brenier--Strassen to martingale Benamou--Brenier}
Finding `natural' martingale transport plans between given marginals is an important problem in mathematical finance, e.g.\ to derive robust bounds or to obtain calibrated models. 
Recently, particular attention (see \cite{Lo18, BaBeHuKa20, GuLoWa19, CoHe21, AcMaPa23, BaLoOb24, BePaRi24} among others) has gone to \emph{Bass martingales}, i.e.\ martingales of the form 
\begin{align}\label{eq:Bass_definition}
    M_t=\EE[\nabla v (B_1)|B_t],
\end{align}
for some convex $v:\R^d\to \R$ and a Brownian motion $(B_t)_{t\in [0,1]}$ with possibly random $B_0$.
For sufficiently regular $\mu, \nu\in \P_2(\R^d), \mu\leq_c\nu$, there is a (unique) Bass martingale with $M_0\sim \mu, M_1\sim \nu$ which is further characterized as the solution to the martingale Benamou--Brenier problem, see \cite{BaBeHuKa20, BaBeScTs23}.\footnote{Alternatively, the Bass martingale is the martingale which is closest to Brownian motion in adapted Wasserstein distance subject to the given marginal constraints.}  
The key to this result is to recast the optimization problem as the WMOT problem 
\begin{equation*} \label{eq:Bass_primal} 
\sup_{\pi \in \cplm(\mu,\nu)} \int \MCov(\pi_{x},\gamma) \, \mu(dx),  \text{ where }\MCov(\rho,\gamma) := \sup_{q \in \cpl(\rho,\gamma)} \int  y\cdot z  \, q(dy,dz) 
\end{equation*}
and $\gamma$ is the standard Gaussian.

Given $\alpha, \beta >0$, we consider its `barycentric relaxation'  
\begin{equation}\label{eq:BS_SBM_WOT_intro}
     \inf_{\pi \in \cpl(\mu,\nu)}
    \int \beta |x - \mean{\pi_x}|^2 - \alpha \MCov(\pi_x,\gamma) \, \mu(dx),
\end{equation}
which has as extreme cases Brenier--Strassen (for $\alpha\to 0$) and the martingale Benamou--Brenier problem (for $\beta\to \infty$).

 One can rewrite \eqref{eq:BS_SBM_WOT_intro} as a relaxed WMOT problem so that dual attainment \eqref{eq:RWMOTintro} is applicable. In particular we obtain that there exist unique $T:\R^d\to \R^d$ and $\kappa\in \cplm(T_\#\mu, \nu)$ such that $\pi_x:= \kappa_{T(x)}$ is the unique optimizer of \eqref{eq:BS_SBM_WOT_intro}. Moreover, $T$ is $\tfrac1\beta$-Lipschitz, the gradient of a convex function and $\kappa=\law(M_0, M_1)$ for a Bass martingale $M$. 

We underline that this behavior is more regular than either of the extremes: the optimizer in Brenier--Strassen is not unique, while Bass martingales exist only under additional assumptions on $\mu\leq_c \nu$ in the classical MOT case. In Section \ref{sec:BS_SBM} we give a precise description of the optimizer to \eqref{eq:BS_SBM_WOT_intro} and further extensions.

\subsubsection{Relaxing martingale EOT}

We close this section with another example that becomes more regular by considering a relaxation of the martingale transport as in \eqref{eq:RWMOTintro}. Specifically, we consider the problem 
\begin{align}\label{eq:REMOT_intro}
   \min_{\eta\in \P_2(\R^d)}\Big( \beta \W_2^2(\mu, \eta)  +  \min_{\kappa\in \cplm(\eta, \nu)} \int c\, d\kappa + \eps H(\kappa| \eta\otimes \nu)   \Big)
\end{align}
where $c:\R^d\times \R^d\to [0,\infty)$ is lsc and $\beta, \eps>0$.

Here the usual martingale transport would correspond to the case $\beta\to\infty$, where \eqref{eq:REMOT_intro} is the natural entropic regularization of the martingale transport problem. However, due to the intricacies of MOT duality, this problem appears challenging and has so far been solved only for $d=1$, $c\equiv 0$ and regularity conditions for $\mu, \nu$, see \cite{NuWi24}. In contrast \eqref{eq:REMOT_intro} is more tractable due to strong duality in \eqref{eq:RWMOTintro}: If $c$ is Lipschitz and $\mu$ is absolutely continuous, we show there exist unique solutions $\eta, \kappa$; moreover 
 for suitable functions $\pphi, \ppsi, \Delta$, the density of $\kappa$ is of Gibbs type
\begin{align}
    \frac{d\kappa}{\eta\otimes \nu}  (\bar x,y) = \exp\left(- \frac{c(\bar x,y) + \pphi(\bar x) + \ppsi(y)+ \Delta(\bar x)(y-\bar x)}{\eps} \right).
\end{align}

\subsection{Duality and dual attainment without convexity assumptions}
\label{ssec:noconvexity}
Many known problems which fall in the weak transport framework exhibit a cost function such that $\rho\mapsto C(x, \rho)$ is convex and lsc. 
Moreover, in sufficiently regular settings, it is possible to investigate the weak transport problem for non-convex costs by considering the respective convex hulls. Specifically, assume that $\mu$ is continuous,  
$C$ is jointly continuous with bounded $p$-growth and define $\bar C$ such that $\bar C(x, \cdot)$ is the convex lsc hull of $C(x, \cdot)$ for every $x\in X$. Then \cite[Theorem 3.9]{AcBePa20} asserts that 
\begin{equation}\label{eq:reduceToConvex}
    \inf\limits_{\pi\in\cpl(\mu,\nu)}\int  C(x,\pi_x)\, \mu(dx) = \inf\limits_{\pi\in\cpl(\mu,\nu)}\int\bar C(x,\pi_x)\, \mu(dx).
\end{equation}
In particular, we recover the duality relation as well as dual attainment in this case. However while the right-hand side of \eqref{eq:reduceToConvex} is of course attained, the original primal problem on the left-hand side is not in general. 

In some settings it is important to have primal attainment, as well as dual attainment without imposing convexity or continuity of $\mu$ and $C$. For instance this is the case for the relaxed entropic MOT problem, in mathematical finance in connection to robust hedging of American options and for the model independent pricing problem in a fixed income market, see \cite{AcBePa20}. Here the solution is to consider a suitably enriched space of transport plans, we discuss this in Section \ref{sec:Relaxed_Primal_Problem}.
Using this relaxed formulation of the primal problem, we obtain a general fundamental theorem of WOT without imposing convexity of $\rho \mapsto C(x,\rho)$, see \Cref{thm:FTWOT_mainbody_L}.

\subsection{Related literature}

Weak transportation costs appeared in the works of Marton \cite{Ma96concentration, Ma96contracting} and Talagrand \cite{Ta95, Ta96}, where the authors studied the concentration of measure phenomenon via transport entropy inequalities. Motivated by questions of concentration and curvature properties of discrete measures, Gozlan, Roberto, Samson, and Tetali \cite{GoRoSaTe14} provided the general definition of weak transport costs and studied their basic properties. In particular the Kantorovich duality is obtained for costs which are convex in the second argument and satisfy some additional mild regularity conditions. Subsequently, Alibert, Bouchitte and Champion \cite{AlBoCh18} as well as Bowles and Ghoussoub \cite{BoGh18} introduced closely related setups, specifically \cite{AlBoCh18} also establishes primal existence and duality on compact state spaces. 
Chon\'e, Gozlan and Kramarz \cite{ChGoKr23} relax weak transport further to include also transports by unnormalized kernels, see also \cite{ScTo23}.

Based on using the adapted weak topology on the set of couplings, \cite{BaBePa19} extends existence and duality to the level of generality familiar from classical transport, that is lsc, $[0,\infty]$-valued costs on abstract Polish spaces. We remark that the formulation we give in Theorem \ref{thm:FTWOTintro} is slightly stronger in order to include also EOT in its usual generality. Existence and duality results beyond the convex costs are given in \cite{AcBePa20}, see also Sections \ref{ssec:noconvexity} and \ref{sec:Relaxed_Primal_Problem} below. In analogy to classical {$c$-cyclical monotonicity} \cite{BaBeHuKa20, GoJu18, BaBePa19} develop the notion of \emph{$C$-monotonicity} as a necessary criterion for optimality in weak transport. The article \cite{BaPa19} establishes sufficiency of this criterion as well as stability of WOT w.r.t.\ its marginals.

Besides the original motivation of geometric inequalities, WOT contains a variety of further problems that fall outside the class of classical optimal transport. Particular attention has gone to transport costs for barycentric cost functions
\cite{GoRoSaSh18, Sh16, Sa17, Sh18, FaSh18, GoJu18,  BaPa19, BaBePa19, AlCoJo17}.
As noted in \cite{Co19}, WOT covers entropic optimal transport EOT (see e.g.\ \cite{Le13, Nu22}). It includes martingale optimal transport MOT (see e.g.\ \cite{HeTo13, BeJu16}), semi-martingale optimal transport SMOT (see e.g.\ \cite{TaTo13, BeChHoLoVi24}) and the optimal Skorokhod embedding problem with non-trivial starting law (\cite{BeCoHu14}). WOT has been used to define so-called shadow (sub-) martingales (\cite{BeJu21, BaNo24}). It appears as important tool in the investigation of Bass martingales (e.g.\ \cite{BaBeHuKa20, BePaRi24}) and the recent probabilistic proof to the Caffarelli contraction theorem \cite{FaGoPr20}. Beyond its applications to martingale transport problems, WOT includes a number of further problems in economics and finance, we mention optimal mechanism design \cite{DaDeTz17}, optimal transport planning \cite{AlBoCh18}, stability of pricing and hedging \cite{BaBaBeEd19a}, model-independence in fixed income markets \cite{AcBePa20}, risk measures \cite{KuNeSg24}, and robust pricing of VIX futures \cite{GuMeNu17, CoVi19, BeJoMaPa21b}.
Furthermore, the WOT framework has been applied to machine learning tasks such as unpaired domain translation problems, generative modeling, and the learning of Wasserstein barycenters, see \cite{KoSeBu23,AsKoEgMoBu24,BrChHo22, KoMoUdGaPaBuKo24}.

\section{Fundamental theorem of weak optimal transport}\label{sec:dual}
The aim of this section is to prove the fundamental theorem of weak optimal transport. As already in the case of classical optimal transport (on non-compact spaces), we cannot expect dual attainment when maximizing only over continuous functions. For this reason, we need to introduce the dual problem in the correct generality. 
We denote by $\cL^1(\rho)$ the space of Borel functions that are $\rho-$integrable and possibly take the value $-\infty$.
We call $(\pphi,\ppsi)$ admissible if $\pphi \in \cL^1(\mu)$ and $\ppsi \in \cL^1(\nu)$ are such that
\begin{equation}
    \label{eq:adm}
    \pphi(x) + \rho(\ppsi) \le C(x,\rho)
\end{equation} 
for all $(x,\rho) \in X \times \P_p(Y)$ with $\ppsi \in \cL^1(\rho)$. The set of all admissible pairs is denoted by $\Dadm{C}{\mu}{\nu}$.
Note that since \eqref{eq:adm} has to hold pointwise, we do not identify $\pphi$ and $\ppsi$ with their respective $L^1$-equivalence classes.
The dual problem is then given by
\begin{equation}\tag{DP}
    \DW{C}{\mu}{\nu}:=\sup_{(\pphi,\ppsi) \in \Dadm{C}{\mu}{\nu} } \mu(\pphi) + \nu(\ppsi).
\end{equation}
We recall that the $C$-transform for weak transport costs is given by
\[
    \ppsi^C (x) := \inf \{ C(x,\rho) - \rho(\ppsi) : \rho \in \cP_p(Y) \text{ s.t.\ } \ppsi \in \cL^1(\rho) \},
\]
with the convention that the infimum over the empty set is $+\infty$. Note that $\ppsi^C$ is universally measurable, provided that $C$ and $\ppsi$ are measurable, see e.g.\ \cite[Proposition~7.47]{BeSh78}. 
 
To set the stage for the main result of this section, we introduce the following assumption under which we will work in most parts of this section. 
\begin{assumption}\label{asn}
Let $p \in [1,\infty)$, $\mu \in \cP(X)$ and $\nu \in \cP_p(Y)$. Assume that the cost function $C:X\times \P_p(Y)\to \R \cup \{+\infty\}$ is measurable, $\rho\mapsto C(x,\rho)$ is convex and lsc for the $p$-weak convergence on $\P_p(Y)$, and lower bounded in the sense that $C(x,\rho) \ge a_\ell(x) + \rho(b_\ell)$ for some $a_\ell \in \mathcal L^1(\mu), b_\ell \in \mathcal L^1(\nu)$. Moreover, we assume that $\WOT{C}{\mu}{\nu} < \infty$.

\end{assumption}

\begin{theorem}\label{thm:FTWOT_mainbody}
    Under \Cref{asn}, we have the following:
    
    \begin{enumerate}[\upshape(i)]
    \item\label{it:FTWOT_main1} (primal attainment) $\WOT{C}{\mu}{\nu}$ is attained.
    \item\label{it:FTWOT_main2} (duality) It holds $\WOT{C}{\mu}{\nu} = \DW{C}{\mu}{\nu}$ and
        \begin{align}
            \DW{C}{\mu}{\nu}= \label{eq:WOTdual2}
            \sup\{\mu(\ppsi^C)+\nu(\ppsi): \ppsi \in \mathcal L^1(\nu)  \}.
        \end{align}
    \item\label{it:FTWOT_main3} (dual attainment) If $C$ satisfies conditions \eqref{eq:WOTbound} and \eqref{eq:WOTcont},
    then ${\rm D}_C(\mu,\nu)$ is attained.
    \item\label{it:FTWOT_main4} (complementary slackness) A pair of candidates $(\pi, (\pphi, \ppsi))$ is optimal if and only if
    \begin{align}\label{eq:WOTCS2}
        C(x,\pi_x)= \pphi(x)+\pi_x(\ppsi), \quad \mu\text{-a.s.}
    \end{align}
    In this case, $\pphi(x) = \ppsi^C(x)$ $\mu$-a.s.
\end{enumerate}
\end{theorem}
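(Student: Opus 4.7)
The plan is to prove the four parts in order, with primal attainment and complementary slackness being essentially routine and the two duality statements forming the technical core. I will work throughout with the adapted weak topology on $\cpl(\mu,\nu)$, under which this set is compact and the functional $\pi\mapsto\int C(x,\pi_x)\,\mu(dx)$ inherits lower semicontinuity from the $p$-weak lsc of $\rho\mapsto C(x,\rho)$ assumed in \Cref{asn}. For part (i), the primal functional is bounded below by the integrable function $x\mapsto a_\ell(x)+\mu\otimes\nu$-contribution $\nu(b_\ell)$, is lsc on the compact set $\cpl(\mu,\nu)$, so the direct method produces a minimizer.

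For part (ii), weak duality is immediate: for any admissible $(\pphi,\ppsi)$ and $\pi\in\cpl(\mu,\nu)$,
\[
\mu(\pphi)+\nu(\ppsi)=\int\bigl[\pphi(x)+\pi_x(\ppsi)\bigr]\,\mu(dx)\le\int C(x,\pi_x)\,\mu(dx).
\]
To close the gap, I would invoke the Fenchel--Moreau representation $C(x,\rho)=\sup\{\alpha+\rho(\ppsi):\alpha+\rho'(\ppsi)\le C(x,\rho')\text{ for all }\rho'\}$ coming from convexity and lsc in the second argument, and combine it with a minimax argument exploiting convexity and lsc of the primal functional together with compactness of $\cpl(\mu,\nu)$ in the adapted weak topology. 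The alternative dual formulation \eqref{eq:WOTdual2} then follows from the elementary observation that for every admissible $(\pphi,\ppsi)$ the pair $(\ppsi^C,\ppsi)$ is again admissible and pointwise dominates $\pphi$, while remaining in $\cL^1(\mu)$ whenever the original pair does.

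Part (iii) is the main obstacle. Taking a maximizing sequence $(\pphi_n,\ppsi_n)\in\Dadm{C}{\mu}{\nu}$, I would first replace $\pphi_n$ by $\ppsi_n^C$ using part (ii) to reduce to the one-variable dual. The growth bound \eqref{eq:WOTbound} and the continuity condition \eqref{eq:WOTcont} together prevent escape phenomena for $\ppsi_n$: the $h$-entropy term in \eqref{eq:WOTbound} suppresses pathological concentrated densities among the test measures $\rho$ used in the $C$-transform, while \eqref{eq:WOTcont} permits truncation of $\ppsi_n$ to increasing exhausting sets $Y_k\uparrow Y$ without destroying admissibility in the limit. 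Via a Koml\'os-type passage to convex combinations of $\ppsi_n$, together with stability of $\ppsi\mapsto\ppsi^C$ under the resulting limits, one extracts an admissible limit pair attaining $\DW{C}{\mu}{\nu}$. The delicate point is that admissibility is a pointwise constraint quantified over all $\rho\in\cP_p(Y)$, so it is precisely in preserving this constraint in the limit that \eqref{eq:WOTbound} and \eqref{eq:WOTcont} will be essential.

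Part (iv) is a direct consequence of duality. If $\pi$ and $(\pphi,\ppsi)$ are both optimal, then
\[
\int C(x,\pi_x)\,\mu(dx)=\mu(\pphi)+\nu(\ppsi)=\int\bigl[\pphi(x)+\pi_x(\ppsi)\bigr]\,\mu(dx),
\]
and combined with the pointwise admissibility inequality this forces $\mu$-a.s.\ equality; the converse implication is obtained by reversing the chain. Finally, the identification $\pphi=\ppsi^C$ $\mu$-a.s.\ follows from $\ppsi^C(x)\le C(x,\pi_x)-\pi_x(\ppsi)=\pphi(x)$ $\mu$-a.s.\ (taking $\rho=\pi_x$ in the definition of the $C$-transform) combined with the reverse inequality $\pphi\le\ppsi^C$, which holds pointwise for every admissible pair.
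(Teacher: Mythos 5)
Your overall architecture (primal attainment by direct method, weak duality by integration, strong duality by minimax, dual attainment by Koml\'os plus uniform integrability, complementary slackness from the duality identity) matches the paper's. But the opening sentence on which everything is built contains a factual error: $\cpl(\mu,\nu)$ is \emph{not} compact in the adapted weak topology. This is precisely why the paper introduces the compactification $\Lambda(\mu,\nu)$ of lifted transport plans in Section~2.5 and states explicitly (Remark after \Cref{thm:FTWOT_mainbody_L}) that the adapted weak topology is ``stronger than the usual weak topology but not compact.'' A sequence of Monge-type couplings with oscillating transport maps converges weakly to a non-Monge plan but has no adapted-weak subsequential limit inside $\cpl(\mu,\nu)$. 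So your parts (i) and (ii) as written do not go through.

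The paper circumvents this in a different way: one keeps the ordinary ($p$-)weak topology, under which $\cpl(\mu,\nu)$ \emph{is} compact, and instead works to establish lower semicontinuity of the primal functional there. The crucial observation (used in the proof of \Cref{thm:noGap}) is that on the slice $\Pi=\{\pi: \pi(\,\cdot\times Y)=\mu\}$ of measures with fixed first marginal, weak convergence and stable convergence coincide, and under \Cref{asn} (convexity and $p$-weak lsc of $\rho\mapsto C(x,\rho)$, plus the lower bound $a_\ell \oplus b_\ell$) the map $\pi\mapsto\int C(x,\pi_x)\,\mu(dx)$ is $p$-weakly lsc on $\Pi$. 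After a Borel retopologization of $Y$ making $b_\ell$ continuous, this yields both primal attainment and $\hat{\W}_p$-lower semicontinuity of $\nu\mapsto\WOT{C}{\mu}{\nu}$, from which duality follows by following \cite[Theorem 3.1]{BaBePa18} — not by the minimax argument you sketch. Your minimax idea is not unreasonable in spirit (and is closer to how duality is obtained in the relaxed setting over $\Lambda(\mu,\nu)$), but it cannot be anchored in compactness of $\cpl(\mu,\nu)$ for the adapted weak topology; you would either need to pass to $\Lambda(\mu,\nu)$ and then argue back to $\cpl(\mu,\nu)$ via convexity, or carry out the weak-topology lsc argument above. This is the genuine gap.

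For part (iii), your outline is in the right direction but omits the technical heart: the specific growth estimate (\Cref{lemma:uniform_integrability_positive_part_max_seq}) giving uniform integrability of $(\ppsi_n^+)$ via $h^\ast$ and de la Vall\'ee Poussin, and the Egorov/truncation argument that uses condition \eqref{eq:WOTcont} to verify that the limiting pair is genuinely admissible pointwise (requiring the redefinition of $\pphi$ and $\ppsi$ to $-\infty$ on exceptional null sets, which is why $\cL^1$ must admit the value $-\infty$). These are exactly the places where \eqref{eq:WOTbound} and \eqref{eq:WOTcont} enter and would need to be made precise. Part (iv) is correct and essentially identical to \Cref{prop:slack} together with \Cref{lem:C-trafo}\eqref{lem:C-trafo_item3}.
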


Note that this is a slightly more general version of \Cref{thm:FTWOTintro} stated in the introduction. 

\begin{proof}
The primal attainment result and duality is proved in \Cref{thm:noGap} below. Dual attainment is shown in \Cref{thm:Attainment} below. Finally, the complementary slackness result follows from \Cref{prop:slack} below.
\end{proof}

\subsection{Primal attainment and duality}

The $C$-transform is a key tool for deriving structural properties of (dual) optimizers to the weak transport problem. Before proving duality, we derive some basic properties of the $C$-transform. 

\begin{lemma}\label{lem:C-trafo}
Let $C: X \times \cP_p(Y) \to [-\infty,+\infty]$ be Borel and suppose that $\WOT{C}{\mu}{\nu} < \infty$.
The $C$-transform has the following properties:
\begin{enumerate}[(i)]
    \item\label{lem:C-trafo_item1} For every $(\pphi,\ppsi)\in  \Dadm{C}{\mu}{\nu}$, we have $(\ppsi^C,\ppsi) \in \Dadm{C}{\mu}{\nu}$ and $\ppsi^C \ge \pphi$.
    \item\label{lem:C-trafo_item2} We can restrict the dual problem to admissible pairs of the form $(\ppsi^C,\ppsi)$, i.e.,
    \[
    \DW{C}{\mu}{\nu} = \sup \{ \mu(\ppsi^C) + \nu(\ppsi) : \ppsi \in \mathcal L^1(\nu)  \}
    \]
    \item\label{lem:C-trafo_item3} If $(\pphi,\ppsi)$ is a dual optimizer, so is $(\ppsi^C,\ppsi)$. In this case, $\pphi = \ppsi^C$ $\mu$-a.s.
\end{enumerate}
\end{lemma}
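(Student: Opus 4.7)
The plan is to reduce everything to the pointwise comparison $\pphi \le \ppsi^C$, which makes (i) the main step and leaves (ii) and (iii) as short consequences.

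For (i), I fix $(\pphi, \ppsi) \in \Dadm{C}{\mu}{\nu}$ and rearrange the admissibility inequality: for every $\rho \in \cP_p(Y)$ with $\ppsi \in \cL^1(\rho)$ we have $\pphi(x) \le C(x, \rho) - \rho(\ppsi)$, so taking the infimum over such $\rho$ gives $\pphi(x) \le \ppsi^C(x)$ pointwise. This is already the comparison asserted in (i), and it forces the negative part of $\ppsi^C$ to be $\mu$-integrable since $\pphi \in \cL^1(\mu)$. To bound the positive part I invoke $\WOT{C}{\mu}{\nu} < \infty$ and choose a primal feasible $\pi \in \cpl(\mu,\nu)$; since $\nu \in \cP_p(Y)$ and $\ppsi \in \cL^1(\nu)$, the disintegration satisfies $\pi_x \in \cP_p(Y)$ and $\ppsi \in \cL^1(\pi_x)$ for $\mu$-a.e.\ $x$. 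Therefore $\ppsi^C(x) \le C(x, \pi_x) - \pi_x(\ppsi)$ for such $x$, and integrating in $\mu$ yields
\[
    \mu(\ppsi^C) \le \int C(x, \pi_x)\, \mu(dx) - \nu(\ppsi) < \infty.
\]
Thus $\ppsi^C \in \cL^1(\mu)$, and the admissibility inequality $\ppsi^C(x) + \rho(\ppsi) \le C(x,\rho)$ for $(\ppsi^C, \ppsi)$ is immediate from $\ppsi^C$ being defined as a pointwise infimum.

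For (ii), the comparison from (i) gives $\mu(\pphi) + \nu(\ppsi) \le \mu(\ppsi^C) + \nu(\ppsi)$ for every admissible pair, so the dual supremum can be restricted to pairs $(\ppsi^C, \ppsi)$ with $\ppsi \in \cL^1(\nu)$. For (iii), if $(\pphi, \ppsi)$ is dual-optimal then by (i) the pair $(\ppsi^C, \ppsi)$ is admissible and satisfies $\mu(\ppsi^C) + \nu(\ppsi) \ge \mu(\pphi) + \nu(\ppsi)$, so optimality forces equality of these values; combined with the pointwise bound $\pphi \le \ppsi^C$, equality of the $\mu$-integrals yields $\pphi = \ppsi^C$ $\mu$-a.s.

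The only genuine subtlety I anticipate is the measurability bookkeeping: the $C$-transform is universally but not necessarily Borel measurable, whereas $\cL^1(\mu)$ consists of Borel functions. This is handled by passing to a Borel version of $\ppsi^C$ that agrees with it off a Borel $\mu$-null set on which the version is declared to equal $-\infty$; pointwise admissibility of $(\ppsi^C, \ppsi)$ is preserved, and all integrals are unchanged. The substantive content beyond this routine adjustment lies in the upper bound on $\mu(\ppsi^C)$, where the hypothesis $\WOT{C}{\mu}{\nu} < \infty$ is used essentially.
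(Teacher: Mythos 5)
Your proof is correct and takes essentially the same route as the paper: establish $\pphi(x) \le \ppsi^C(x) \le C(x,\rho) - \rho(\ppsi)$ pointwise, then use $\WOT{C}{\mu}{\nu} < \infty$ to produce a coupling $\pi$ that sandwiches $\ppsi^C$ between the $\mu$-integrable functions $\pphi$ and $x \mapsto C(x,\pi_x) - \pi_x(\ppsi)$, yielding $\ppsi^C \in \cL^1(\mu)$; parts (ii) and (iii) then fall out exactly as you say. The one small difference is that the paper dispenses with the measurability caveat by noting once and for all that $\ppsi^C$ is universally measurable (citing Bertsekas--Shreve), rather than passing to a Borel version, but this is cosmetic.
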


\begin{proof}
    To prove \eqref{lem:C-trafo_item1}, let $(\pphi,\ppsi) \in \Dadm{C}{\mu}{\nu}$.
    For $(x,\rho) \in X \times \P_p(Y)$ with $\ppsi \in \cL^1(\rho)$, we have
    \begin{equation}
        \label{eq:lem.C-trafo.1}
        \pphi(x) \le \ppsi^C(x) \le C(x,\rho) - \rho(\ppsi). 
    \end{equation}
    Since $\WOT{C}{\mu}{\nu} < \infty$, there exists a coupling $\pi \in \cpl(\mu,\nu)$ with $\int C(x,\pi_x) \, \mu(dx) < \infty$.
    Using that $\ppsi \in \cL^1(\nu)$ and the second inequality in \eqref{eq:lem.C-trafo.1}, we find that $g^C$ is $\mu$-a.s.\ dominated from above by the $\mu$-integrable function
    \begin{equation}
        \label{eq:lem.C-trafo.2}
        x \mapsto C(x,\pi_x) - \pi_x(\ppsi).
    \end{equation} 
    Together with $\pphi \in \cL^1(\mu)$ and the first inequality in \eqref{eq:lem.C-trafo.1}, this yields that $\ppsi^C \in  \cL^1(\mu)$.
    Hence, $(\ppsi^C,\ppsi) \in \Dadm{C}{\mu}{\nu}$.

    The assertion \eqref{lem:C-trafo_item2} follows directly from \eqref{lem:C-trafo_item1}.  
    
    To show \eqref{lem:C-trafo_item3}, let $(f,g)$ be a dual optimizer.
    Note that by $\eqref{lem:C-trafo_item1}$ the pair $(\ppsi^C,\ppsi)$ is admissible and $\mu(\pphi) + \nu(\ppsi) \le \mu(\ppsi^C) + \nu(\ppsi)$.
    Since $(\pphi,\ppsi)$ is optimal, so is $(\ppsi^C,\ppsi)$ and we have $\mu(\pphi) = \mu(\ppsi^C)$.
    Hence, as $\ppsi^C\geq\pphi$ and $\mu(\pphi) = \mu(\ppsi^C)$ we conclude that $\pphi = \ppsi^C$ $\mu$-a.s.
\end{proof}

\begin{lemma} \label{lem:weakDuality}
Let $C: X \times \cP_p(Y) \to [-\infty,+\infty]$ be Borel. We have $\DW{C}{\mu}{\nu} \le \WOT{C}{\mu}{\nu}$.
\end{lemma}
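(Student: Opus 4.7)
The plan is to prove the standard weak duality inequality by testing an arbitrary admissible dual pair against an arbitrary primal coupling and using Fubini. The argument is essentially pointwise plus an integration, with the only mildly subtle point being that the admissibility condition in \eqref{eq:adm} requires $\ppsi \in \cL^1(\rho)$, so one has to check that $\pi_x$ is a legitimate test measure for $\mu$-almost every $x$.

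Concretely, I would fix $(\pphi,\ppsi) \in \Dadm{C}{\mu}{\nu}$ and $\pi \in \cpl(\mu,\nu)$, and aim to show
\[
    \mu(\pphi) + \nu(\ppsi) \le \int C(x,\pi_x)\, \mu(dx).
\]
If the right-hand side is infinite there is nothing to prove, so assume it is finite. Since $\ppsi \in \cL^1(\nu)$ and the $X$-marginal of $\pi$ is $\mu$, Fubini gives
\[
    \int \int |\ppsi(y)|\, \pi_x(dy)\, \mu(dx) = \int |\ppsi|\, d\nu < \infty,
\]
so $\ppsi \in \cL^1(\pi_x)$ for $\mu$-almost every $x$. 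For every such $x$ we may apply the admissibility inequality \eqref{eq:adm} with $\rho = \pi_x$ to obtain
\[
    \pphi(x) + \pi_x(\ppsi) \le C(x,\pi_x).
\]
Integrating against $\mu$ and using Fubini once more to rewrite $\int \pi_x(\ppsi)\, \mu(dx) = \nu(\ppsi)$ yields the claim; then take the supremum over admissible $(\pphi,\ppsi)$ on the left and the infimum over $\pi$ on the right.

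There is no real obstacle here: the only things to be careful about are (i) that $\mu(\pphi)$ and $\nu(\ppsi)$ are well defined as finite numbers (which is built into the definition of $\Dadm{C}{\mu}{\nu}$ via $\pphi \in \cL^1(\mu)$ and $\ppsi \in \cL^1(\nu)$, allowing the value $-\infty$ on a null set but with finite integral), and (ii) the $\mu$-a.s.\ integrability of $\ppsi$ against $\pi_x$ noted above, which legitimizes using $\rho = \pi_x$ in \eqref{eq:adm}. Measurability of $x \mapsto \pi_x(\ppsi)$ is standard for a disintegration, so Fubini applies without issue.
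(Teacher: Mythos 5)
Your proof is correct and follows essentially the same route as the paper: both test an admissible pair against a coupling with finite cost, reduce to the pointwise admissibility inequality for $\rho=\pi_x$, and integrate; the paper delegates the $\mu$-a.s.\ integrability of $\ppsi$ against $\pi_x$ to the proof of Lemma~\ref{lem:C-trafo}, whereas you write that Fubini argument out explicitly.
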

\begin{proof}
    Wlog assume that $\WOT{C}{\mu}{\nu}<+\infty$.
    Let $(\pphi,\ppsi)\in\Dadm{C}{\mu}{\nu}$ and $\pi\in\cpl(\mu,\nu)$ with finite cost.
    As in the proof of Lemma \ref{lem:C-trafo} \eqref{lem:C-trafo_item1} we find that $\mu$-a.s.
    \[
        \pphi(x) \le  C(x,\pi_x) - \pi_x(\ppsi).
    \]
    Integrating both sides w.r.t.\ $\mu$ and rearranging terms yield
    \[
        \mu(\pphi) + \nu(\ppsi) \le \int C(x,\pi_x) \, \mu(dx).
    \]
    Taking the infimum over $\pi \in \cpl(\mu,\nu)$ with finite cost and the supremum over $(\pphi,\ppsi) \in \Dadm{C}{\mu}{\nu}$ yields the claim.
\end{proof}

The first step of the proof of \Cref{thm:FTWOT_mainbody} is to derive the no duality gap result in \Cref{thm:noGap}. Note that this extends \cite[Theorem 3.1]{BaBePa19} by requiring only that $C$ is convex lsc in the second component but not jointly lsc. This generalization is relevant e.g.\ as it allows us to derive the structure theorem of entropic optimal transport in its usual generality in Section \ref{sec:reg_OT}.

\begin{theorem}\label{thm:noGap}
Under \Cref{asn}, we have primal attainment of $\WOT{C}{\mu}{\nu}$ and 
    \begin{align*}
        \WOT{C}{\mu}{\nu} = \DW{C}{\mu}{\nu}
        =
        \sup\{\mu(\ppsi^C)+\nu(\ppsi): \ppsi \in \mathcal L^1(\nu) \}.
    \end{align*}
\end{theorem}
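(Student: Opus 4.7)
The last equality $\DW{C}{\mu}{\nu}=\sup\{\mu(\ppsi^C)+\nu(\ppsi):\ppsi\in\mathcal L^1(\nu)\}$ is contained in \Cref{lem:C-trafo}\eqref{lem:C-trafo_item2}, and weak duality $\DW{C}{\mu}{\nu}\le\WOT{C}{\mu}{\nu}$ is \Cref{lem:weakDuality}. What remains is primal attainment together with the converse inequality $\WOT{C}{\mu}{\nu}\le\DW{C}{\mu}{\nu}$. My first move would be to reduce to the case $C\ge 0$: subtracting the affine lower bound $a_\ell(x)+\rho(b_\ell)$ from $C$ preserves convexity and lsc in $\rho$, shifts both $\WOT{C}{\mu}{\nu}$ and $\DW{C}{\mu}{\nu}$ by the common constant $\mu(a_\ell)+\nu(b_\ell)$, and leaves the primal value finite.

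\textbf{Primal attainment.} I would take a minimizing sequence $(\pi^n)_n\subset\cpl(\mu,\nu)$. The marginal constraint makes $\cpl(\mu,\nu)$ tight, and in fact compact in the adapted weak topology on the set of couplings (in the sense used in \cite{BaBePa19}); passing to a subsequence yields a limit $\pi^\ast\in\cpl(\mu,\nu)$. The functional $\pi\mapsto\int C(x,\pi_x)\,\mu(dx)$ is lower semicontinuous in this topology under our assumptions: using the fiberwise Fenchel-Moreau representation $C(x,\rho)=\sup_n[\pphi_n(x)+\rho(\ppsi_n)]$ for a suitable countable family $(\pphi_n,\ppsi_n)\in\Dadm{C}{\mu}{\nu}$ (constructed in the next paragraph), the functional is exhibited as a monotone supremum of adapted-weakly continuous maps $\pi\mapsto\mu(\pphi_n)+\nu(\ppsi_n)$-type expressions; monotone convergence / Fatou then gives the lsc, and $\pi^\ast$ attains $\WOT{C}{\mu}{\nu}$.

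\textbf{No duality gap.} To establish $\WOT{C}{\mu}{\nu}\le\DW{C}{\mu}{\nu}$, I would approximate $C$ from below by finite maxima of affine-in-$\rho$ functions. Applying Fenchel-Moreau to $C(x,\cdot)$ for each $x$, and extracting a countable subfamily working simultaneously across $x$ using separability of $\cP_p(Y)$, I would produce $(\pphi_n,\ppsi_n)_n\in\Dadm{C}{\mu}{\nu}$ with $C(x,\rho)=\sup_n[\pphi_n(x)+\rho(\ppsi_n)]$. Set $C_N(x,\rho):=\max_{n\le N}[\pphi_n(x)+\rho(\ppsi_n)]$; each $C_N$ is jointly measurable, convex lsc in $\rho$, and structurally simpler, so that $\WOT{C_N}{\mu}{\nu}=\DW{C_N}{\mu}{\nu}$ follows from the duality theory for jointly lsc convex weak transport costs of \cite{BaBePa19} (after a mild regularization of the $\pphi_n$ if needed). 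Since any pair admissible for $C_N$ is admissible for $C$, $\DW{C_N}{\mu}{\nu}\le\DW{C}{\mu}{\nu}$, while monotone convergence on the primal side gives $\WOT{C_N}{\mu}{\nu}\uparrow\WOT{C}{\mu}{\nu}$. Passing to the limit closes the gap.

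\textbf{Main obstacle.} The technical heart of the argument is the construction of the approximating family $(\pphi_n,\ppsi_n)$: pointwise Hahn-Banach produces an abundance of affine minorants of $C(x,\cdot)$, but the delicate step is to assemble them into jointly measurable $\pphi_n$ with $\ppsi_n$ independent of $x$, so that each pair truly lies in $\Dadm{C}{\mu}{\nu}$. This is where the joint measurability of $C$ and the separability of $\cP_p(Y)$ enter decisively, and it is precisely what goes beyond the joint-lsc hypothesis used in \cite{BaBePa19}. An alternative that avoids the explicit construction is a direct Fenchel-Rockafellar argument on the dual pair $\mathcal L^1(\mu)\oplus C_p(Y)$ against $\cpl(\mu,\nu)$, using the primal lsc established above as the key ingredient; I expect this to be the cleanest route, though either path hinges on the same fundamental point that convexity and lsc in $\rho$ alone suffice to reduce to the affine case.
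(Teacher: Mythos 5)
Your high-level structure (reduce to $C\ge 0$, establish primal lsc/attainment, then close the gap) is the right one, and you correctly delegate the last equality and weak duality to Lemmas \ref{lem:C-trafo} and \ref{lem:weakDuality}. However, there is a genuine gap at precisely the step you flag as the ``technical heart,'' and the gap is not merely one of detail: the existence of a countable family $(\pphi_n,\ppsi_n)\in\Dadm{C}{\mu}{\nu}$ with $\ppsi_n$ independent of $x$ and $C(x,\rho)=\sup_n\bigl[\pphi_n(x)+\rho(\ppsi_n)\bigr]$ does not follow from joint measurability of $C$ plus separability of $\cP_p(Y)$. Pointwise Fenchel--Moreau gives $C(x,\rho)=\sup_{\ppsi}\bigl[\ppsi^C(x)+\rho(\ppsi)\bigr]$, but the map $\ppsi\mapsto\ppsi^C(x)+\rho(\ppsi)$ is only upper semicontinuous in $\ppsi$, so restricting the supremum to a countable dense set of test functions can lose value. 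In the linear special case $C(x,\rho)=\int c\,d\rho$, such a countable affine representation of an lsc $c$ bounded by $a\oplus b$ is essentially Kellerer's representability theorem and is equivalent in strength to the duality you are trying to prove; so the argument is circular unless you supply a new proof of this representation. Your alternative Fenchel--Rockafellar route inherits the same gap, since it feeds on the primal lsc that you derive from the same countable-family claim.

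The paper avoids this obstacle by a different device. After subtracting $a_\ell(x)+\rho(b_\ell)$, the shifted cost $\tilde C$ fails to be lsc in $\rho$ unless $b_\ell$ is continuous; rather than approximating, the paper refines the Polish topology on $Y$ (via \cite[Theorem 13.11]{Ke95}) so that $b_\ell$ becomes continuous and non-positive without altering the Borel structure. With the new compatible metric $\hat d_Y$ and its Wasserstein distance $\hat\W_p$, one has $\W_p\le\hat\W_p$, so $\W_p$-lsc of $C(x,\cdot)$ upgrades to $\hat\W_p$-lsc and $\rho\mapsto -\rho(b_\ell)$ becomes $\hat\W_p$-lsc as well, making $\tilde C\ge 0$ and $\hat\W_p$-lsc in $\rho$. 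The paper then applies \cite[Proposition A.12 (d)]{BeJoMaPa21b} to get lower semicontinuity of $\pi\mapsto\int\tilde C(x,\pi_x)\,\mu(dx)$ on the fiber $\{\pi:{\rm pr}_{X\#}\pi=\mu\}$ for $p$-weak convergence --- the crucial point being that this lemma only needs measurability plus lsc in the second argument, not joint lsc. Primal attainment and lsc of $\nu\mapsto\WOT{\tilde C}{\mu}{\nu}$ then fall out by compactness of $\cpl(\mu,\nu)$, and the duality follows the argument of \cite[Theorem 3.1]{BaBePa18} for the refined metric. So the paper trades the measurable-selection problem (which your approach faces head-on) for a topological refinement; the former is not obviously tractable under Assumption \ref{asn}, while the latter is.
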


\begin{proof}
    We start with some preparations before proving the assertion of the theorem.
    First, note that we may wlog assume $b_\ell \le 0$.
    Next, write $d_Y$ for the (complete, separable) metric on $Y$ which is compatible with the Polish topology on $Y$, denoted by $\tau_Y$.
    By \cite[Theorem 13.11]{Ke95} there exists a Polish topology $\tilde \tau_Y\supseteq \tau_Y$ such that $b_\ell$ is $\tilde \tau_Y$-continuous and $\tau_Y,\tilde\tau_Y$ induce the same Borel $\sigma$-algebra.
    Let $\tilde d_Y$ be a bounded, complete, separable metric on $Y$ that is compatible with $\tilde \tau_Y$.
    Define the metric $\hat d_Y(y_1,y_2) := \max( d_Y(y_1,y_2), \tilde d_Y(y_1,y_2))$, which is complete, separable and compatible with $\tilde \tau_Y$.
    Denote the associated $p$-Wasserstein distance by $\hat \W_p$.
    Since $\W_p \le \hat \W_p$ and as $b_\ell$ is continuous and non-positive, we find that
    \[
        \tilde C(x,\rho) := C(x,\rho) - a_\ell(x) - \rho(b_\ell),
    \]
    is measurable, non-negative and $\rho \mapsto \tilde C(x,\rho)$ is convex and $\hat \W_p$-lsc.
    We proceed to show the assertion for the cost $\tilde C$ and remark that then the claim also follows for the cost $C$.
    
    We endow $X \times Y$ with the metric $d((x,y),(x',y')) = ((d_X(x,x') \wedge 1)^p + \hat d_Y(y,y')^p)^\frac1p$.
    Consider
    \[
        \Pi := \{ \pi \in \P_p(X \times Y) : \pi(dx \times Y) = \mu(dx) \},
    \]
    and observe that weak convergence and stable convergence coincide on $\Pi$, by \cite[Lemma A.10]{BeJoMaPa21b}.
    Hence, \cite[Proposition A.12 (d)]{BeJoMaPa21b} yields that
    \[
        \Pi \ni \pi \mapsto I_{\tilde C}[\pi] := \int \tilde C(x,\pi_x) \, \mu(dx),
    \]
    is lsc for the $p$-weak convergence on $\P_p(X \times Y)$.
    In particular, if $(\nu_n)_{n \in \N}$ converges to $\nu$ in $\hat \W_p$ and $(\pi_n)_{n \in \N}$ is a sequence of couplings with $\pi_n \in \cpl(\mu,\nu_n)$, the latter sequence is even relatively compact in $\P_p(X \times Y)$.
    Consequently, by potentially passing to a subsequence, we may assume that $\pi_n \to \pi$ in $p$-convergence on $\P_p(X \times Y)$ where $\pi \in \cpl(\mu,\nu)$.
    Assuming that $\pi_n$ was $\frac1n$-optimal for $\WOT{{\tilde C}}{\mu}{\nu_n}$ we obtain that
    \[
        \liminf_{n \to \infty} \WOT{{\tilde C}}{\mu}{\nu_n} =
        \liminf_{n \to \infty} I_{\tilde C}[\pi_n] \ge
        I_{\tilde C}[\pi] \ge \WOT{{\tilde C}}{\mu}{\nu}.
    \] 
    
    Hence, $\nu \mapsto \WOT{{\tilde C}}{\mu}{\nu}$ is $\hat \W_p$-lsc and it also follows (by letting $(\pi_n)_{n \in \N}$ be a minimizing sequence in $\cpl(\mu,\nu$) that $\WOT{{\tilde C}}{\mu}{\nu}$ is attained.
    Using lower semicontinuity of $\nu \mapsto \WOT{{\tilde C}}{\mu}{\nu}$, we can follow line by line \cite[Proof of Theorem 3.1]{BaBePa18} and obtain
    \[
        \WOT{{\tilde C}}{\mu}{\nu} = 
        \sup \mu(\ppsi^{\tilde C}) + \nu(\ppsi),
    \]
    where the supremum is taken over all $\ppsi \in C((Y,\hat d_Y))$ with at most $p$-growth that are bounded from above.
    In particular, $\ppsi \in \cL^1(\nu)$ and $\mu(\ppsi^{\tilde C})$ is well-defined with value in $[-\infty,+\infty)$.
    Now, the assertion follows by the first part of the proof.
\end{proof}

\subsection{Complementary slackness} \label{ssec:CS}

\begin{proposition}\label{prop:slack}
Suppose that \Cref{asn} is satisfied. Then, $\pi\in\cpl(\mu,\nu)$ and $(\pphi,\ppsi) \in \Dadm{C}{\mu}{\nu}$ are both optimal if and only if they satisfy the complementary slackness condition 
\begin{align}\label{eq:WOTCS3}
    C(x,\pi_x)= \pphi(x)+\pi_x(\ppsi), \quad \mu\text{-a.s.}
\end{align}      
\end{proposition}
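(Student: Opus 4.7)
The approach is standard once strong duality is in hand: integrate the pointwise admissibility inequality against $\mu$ and compare with $\WOT{C}{\mu}{\nu} = \DW{C}{\mu}{\nu}$ from \Cref{thm:noGap}. Both implications are a single chain of (in)equalities.

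\emph{Preliminary step.} First I would check that for $\mu$-a.e.\ $x$ the admissibility inequality \eqref{eq:adm} can actually be applied with $\rho = \pi_x$. Since $\ppsi \in \mathcal L^1(\nu)$ and $\nu(\cdot) = \int \pi_x(\cdot)\,\mu(dx)$, Fubini gives
\[
\int \pi_x(|\ppsi|)\,\mu(dx) \;=\; \nu(|\ppsi|) \;<\; \infty,
\]
so $\ppsi \in \mathcal L^1(\pi_x)$ for $\mu$-a.e.\ $x$, and hence $\pphi(x)+\pi_x(\ppsi) \le C(x,\pi_x)$ holds $\mu$-a.s.

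\emph{Forward direction.} Assume $\pi$ and $(\pphi,\ppsi)$ are both optimal. Integrating the pointwise inequality against $\mu$ yields
\[
\mu(\pphi) + \nu(\ppsi) \;\le\; \int C(x,\pi_x)\,\mu(dx).
\]
The left-hand side equals $\DW{C}{\mu}{\nu}$ and the right-hand side equals $\WOT{C}{\mu}{\nu}$ by optimality, and both coincide by \Cref{thm:noGap}. Hence equality holds in the integral, and since $\pphi(x)+\pi_x(\ppsi) \le C(x,\pi_x)$ $\mu$-a.s., equality must in fact hold $\mu$-a.s., giving \eqref{eq:WOTCS3}.

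\emph{Backward direction.} Conversely, if \eqref{eq:WOTCS3} holds, integrating against $\mu$ and using $\int \pi_x(\ppsi)\,\mu(dx) = \nu(\ppsi)$ gives $\mu(\pphi)+\nu(\ppsi) = \int C(x,\pi_x)\,\mu(dx)$. Combining with the chain $\mu(\pphi)+\nu(\ppsi) \le \DW{C}{\mu}{\nu} \le \WOT{C}{\mu}{\nu} \le \int C(x,\pi_x)\,\mu(dx)$ (weak duality from \Cref{lem:weakDuality} together with admissibility of $\pi$), all inequalities must be equalities, so $\pi$ is primal optimal and $(\pphi,\ppsi)$ is dual optimal.

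\emph{Main obstacle.} There is essentially no obstacle beyond what \Cref{thm:noGap} already provides; the only point requiring a moment of care is the Fubini step ensuring that \eqref{eq:adm} is applicable at $\rho = \pi_x$ for $\mu$-a.e.\ $x$, so that the pointwise admissibility inequality can be integrated.
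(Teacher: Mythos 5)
Your proof is correct and follows essentially the same route as the paper's: the Fubini step to justify applying admissibility at $\rho=\pi_x$, integration of the pointwise inequality, and then strong duality (from \Cref{thm:noGap}) for the forward direction and weak duality (\Cref{lem:weakDuality}) for the converse. The only cosmetic difference is that the paper phrases the forward implication as the $\mu$-integral of the nonnegative function $C(x,\pi_x)-\pphi(x)-\pi_x(\ppsi)$ vanishing, whereas you phrase it as a chain of (in)equalities collapsing; the content is identical.
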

\begin{proof}
    Let $\pi\in\cpl(\mu,\nu)$ and $(\pphi, \ppsi)\in\Dadm{C}{\mu}{\nu}$ both be optimal. As $g\in\cL^1(\nu)$, we have
    \begin{equation*}
        \infty>\int |\ppsi(y)|\, \nu(dy) =\int_X\int_Y|\ppsi(y)|\,\pi_x(dy)\,\mu(dx),
    \end{equation*}
    which implies $\ppsi \in \cL^1(\pi_x)$ $\mu$-a.s. 
    By admissibility of $(\pphi,\ppsi)$ we have $\pphi(x)+\pi_x(\ppsi)\leq C(x,\pi_x)$ $\mu$-a.s. Since $\pi$ and $(\pphi,\ppsi)$ are both optimal, it follows
    \begin{equation*}
        \int C(x,\pi_x)-\pphi(x)-\pi_x(\ppsi) \,\mu(dx) = 0,
    \end{equation*}
    implying $\pphi(x)+\pi_x(\ppsi)=C(x,\pi_x)$ $\mu$-a.s.

    Suppose now that $\pi\in\cpl(\mu,\nu)$ and $(\pphi,\ppsi)\in\Dadm{C}{\mu}{\nu}$ satisfy \eqref{eq:WOTCS3}.
    By \Cref{lem:weakDuality} we have
    \[
        \mu(\pphi) + \nu(\ppsi) \le {\rm WT}_C(\mu,\nu) \text{ and } {\rm D}_C(\mu,\nu) \le \int C(x,\pi_x) \, \mu(dx).
    \]
    Hence, the assertion follows as $\int C(x,\pi_x) \, \mu(dx) = \mu(\pphi) + \nu(\ppsi)$.
    
\end{proof}

\subsection{Dual attainment}

The last major step in the proof of the fundamental theorem of weak optimal transport is the existence of dual optimizers.

\begin{theorem}\label{thm:Attainment}
Suppose that $C: X \times \cP_p(Y) \to \R$ satisfies conditions \eqref{eq:WOTbound} and \eqref{eq:WOTcont}. Then there is a dual optimizer $(\pphi,\ppsi) \in\Dadm{C}{\mu}{\nu}$. 
\end{theorem}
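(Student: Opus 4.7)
The plan is to extract a convergent subsequence from a dual maximizing sequence via a Komlós-type argument, and then pass to the limit using the convexity and lower semicontinuity of $\rho \mapsto C(x,\rho)$, together with the boundedness assumption \eqref{eq:WOTbound} and the continuity condition \eqref{eq:WOTcont}.

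First, by \Cref{thm:noGap}, $\DW{C}{\mu}{\nu} = \WOT{C}{\mu}{\nu}$ is finite. I pick a maximizing sequence $(\pphi_n,\ppsi_n) \in \Dadm{C}{\mu}{\nu}$ and, using \Cref{lem:C-trafo}\eqref{lem:C-trafo_item1}, replace each $\pphi_n$ by $\ppsi_n^C$. Since the simultaneous shift $\ppsi_n \leadsto \ppsi_n + t_n$, $\ppsi_n^C \leadsto \ppsi_n^C - t_n$ leaves admissibility and the dual value invariant, I normalize so that $\nu(\ppsi_n) = 0$, whence $\mu(\ppsi_n^C) \to \DW{C}{\mu}{\nu}$.

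Next I exploit \eqref{eq:WOTbound} for $L^1$-compactness. Testing $\ppsi_n^C(x) + \rho(\ppsi_n) \le C(x,\rho)$ against $\rho = \nu$ (which has unit density w.r.t.\ $\nu$) yields $\ppsi_n^C(x) \le a(x) + \nu(b) + h(1)$ pointwise, so $(\ppsi_n^C)^+$ is dominated by an $\cL^1(\mu)$-function; combined with $\mu(\ppsi_n^C) \to \DW{C}{\mu}{\nu}$, the sequence $(\ppsi_n^C)$ is uniformly bounded in $L^1(\mu)$. The analogous control on $(\ppsi_n)$ is more delicate: testing admissibility against normalized restrictions $\rho = \nu|_A/\nu(A)$, integrating over $\mu$, and invoking \eqref{eq:WOTbound} leads to an estimate of the form
\[
    \int_A (\ppsi_n - b)\,d\nu \le \nu(A)\bigl(\mu(a) - \mu(\ppsi_n^C)\bigr) + \nu(A)^2\, h\bigl(1/\nu(A)\bigr) + h(0)\nu(A),
\]
which, thanks to convexity and growth properties of $h$, encodes uniform integrability of $\{\ppsi_n\}$ (via de la Vallée Poussin) and hence $L^1(\nu)$-boundedness. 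Komlós's theorem then produces, along a common subsequence, Cesàro averages $\bar\pphi_n := n^{-1}\sum_{k \le n} \ppsi_k^C$ and $\bar\ppsi_n := n^{-1}\sum_{k \le n} \ppsi_k$ converging $\mu$-a.s.\ to some $\pphi^* \in \cL^1(\mu)$ and $\nu$-a.s.\ to some $\ppsi^* \in \cL^1(\nu)$.

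Finally, admissibility is preserved under Cesàro averaging, so $(\bar\pphi_n, \bar\ppsi_n) \in \Dadm{C}{\mu}{\nu}$ for every $n$. For any $\rho \in \cP_p(Y)$ with $d\rho/d\nu \in L^\infty(\nu)$, uniform integrability plus $\nu$-a.s.\ convergence gives $\rho(\bar\ppsi_n) \to \rho(\ppsi^*)$ by Vitali, so passing to the limit in $\bar\pphi_n(x) + \rho(\bar\ppsi_n) \le C(x,\rho)$ yields $\pphi^*(x) + \rho(\ppsi^*) \le C(x,\rho)$ for $\mu$-a.e.\ $x$. Condition \eqref{eq:WOTcont} then extends this inequality from bounded-density measures to arbitrary $\rho$ by approximating $\rho$ via $\rho|_{Y_k}/\rho(Y_k)$ along an exhausting sequence $Y_k \uparrow Y$ on which $d\rho/d\nu$ is bounded. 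Hence $\pphi^*(x) \le (\ppsi^*)^C(x)$ $\mu$-a.e. By \Cref{lem:C-trafo}\eqref{lem:C-trafo_item1}, $((\ppsi^*)^C, \ppsi^*)$ is itself admissible, and
\[
    \mu((\ppsi^*)^C) + \nu(\ppsi^*) \ge \mu(\pphi^*) + 0 = \lim_n \mu(\bar\pphi_n) = \DW{C}{\mu}{\nu},
\]
so it is a dual optimizer.

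The principal obstacle is extracting $L^1(\nu)$-compactness of $(\ppsi_n)$: unlike classical OT, where the $c$-transform immediately produces pointwise $\pphi$-dependent bounds on $\ppsi$, the weak $C$-transform is an infimum over measures and does not invert cleanly, so the required bounds must be teased out of the full family of admissibility constraints tested against restricted measures, where the convex function $h$ in \eqref{eq:WOTbound} plays the essential role. The second delicate point is the passage from admissibility against measures with bounded density to arbitrary $\rho \in \cP_p(Y)$, for which \eqref{eq:WOTcont} is indispensable.
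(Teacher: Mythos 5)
Your high-level plan (normalize, Komlós compactness from condition~\eqref{eq:WOTbound}, pass to the limit via condition~\eqref{eq:WOTcont}) matches the paper's strategy, but two of the hard technical steps contain genuine gaps.

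First, the uniform integrability argument. You test the admissibility constraint only against normalized indicator densities $\rho = \nu|_A/\nu(A)$ and assert that the resulting estimate ``encodes'' uniform integrability via de la Vall\'ee Poussin. It does not: the right-hand side of your inequality contains the term $\nu(A)^2\, h(1/\nu(A))$, which for $h(t)=t^2$ is constantly $1$, and for $h(t)=e^t$ diverges as $\nu(A)\to 0$. So the estimate does not exhibit uniform absolute continuity of $\int_A (\ppsi_n - b)^+\,d\nu$. This is exactly the point of the paper's \Cref{lemma:uniform_integrability_positive_part_max_seq}, whose proof tests against a much richer family of test functions $\xi$ built from a Borel selector of $\partial h^\ast$, and concludes $\int h^\ast((\ppsi_n - b)^+ - K_1)\,d\nu \le K_2$; combined with supercoercivity of $h^\ast$ (after the replacement $h \leadsto h + |\cdot|^2$), this is what makes de la Vall\'ee Poussin applicable. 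Indicator tests alone cannot reproduce this.

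Second, the passage to the limit. You invoke Vitali to get $\rho(\bar\ppsi_n)\to\rho(\ppsi^*)$ from $\nu$-a.s.\ convergence and ``uniform integrability.'' But only the positive parts $\ppsi_n^+$ are uniformly integrable; the negative parts need not be, and $\ppsi^*$ may legitimately equal $-\infty$ on a $\nu$-nullset. For the positive parts Vitali is fine, but Fatou on the negative parts gives $\liminf_n \rho(\bar\ppsi_n^-) \ge \rho((\ppsi^*)^-)$, i.e.\ $\limsup_n \rho(\bar\ppsi_n) \le \rho(\ppsi^*)$ --- the \emph{wrong} direction for passing to the limit in $\bar\pphi_n(x) + \rho(\bar\ppsi_n) \le C(x,\rho)$. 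The paper instead applies Egorov to obtain compacta $K_N$ of full $\nu$-measure on which $\ppsi_n \to \ppsi$ uniformly; uniform convergence gives $\rho(\ppsi_n)\to\rho(\ppsi)$ for every $\rho$ concentrated on some $K_N$, regardless of integrability of the negative parts. Relatedly, your extension to general $\rho$ via exhausting sequences $Y_k$ with bounded density $d\rho/d\nu$ implicitly assumes $\rho\ll\nu$; admissibility must hold for all $\rho$ with $\ppsi^*\in\cL^1(\rho)$, including those with a singular part. The paper handles this by redefining $\ppsi := -\infty$ on the $\nu$-nullset $(\bigcup_N K_N)^c$ (which makes the constraint vacuous when $\rho$ charges that set) and by applying \eqref{eq:WOTcont} with the compacta $K_N$ rather than density-bounded sets. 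Your proposal does not perform this modification of $\ppsi^*$ and so does not cover the singular case.
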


In order to prove \Cref{thm:Attainment}, we need the following growth estimate on admissible potentials, which will yield uniform integrability of the maximizing sequence.
    
\begin{lemma}\label{lemma:uniform_integrability_positive_part_max_seq}
Let $h: [0,\infty) \to [0,\infty)$ be convex, increasing and super-coercive, $b \in \mathcal{L}^1(\nu)$ and $\alpha \in \R$. Then, there exist constants $K_1,K_2$ (depending on $\alpha$, $b$ and $h$) such that the following holds: If $\ppsi \in \mathcal{L}^1(\nu)$ satisfies $\int \ppsi \,d\nu = 0$ and 
\begin{align}\label{eq:lemUIasn}
    \int (\ppsi-b) \xi \,d\nu \le \alpha + \int h(\xi) \,d\nu
\end{align}
for every $\xi \in \mathcal{L}^\infty(\nu)$ with $\xi \ge 0$ and $\int \xi \,d\nu =1$, then we have 
\begin{align}\label{eq:lemUIconclusion}
    \int h^\ast ((\ppsi - b)^+-K_1) \,d\nu\le K_2.
\end{align}
\end{lemma}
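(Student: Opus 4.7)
I will use convex duality. Set $u := \ppsi - b$ and let $h^*(s) := \sup_{t \ge 0}(st - h(t))$ be the convex conjugate of $h$ (extended to $+\infty$ on $(-\infty,0)$); super-coercivity of $h$ makes $h^*$ finite on $\mathbb{R}$, and since $h$ is increasing one has $h^*(s) = -h(0)$ for every $s \le 0$. The hypothesis can be rewritten as
\[
\sup_{\xi \ge 0,\ \int \xi\, d\nu = 1}\Bigl[\int u\xi\, d\nu - \int h(\xi)\, d\nu\Bigr] \le \alpha.
\]
By Lagrangian duality applied to the linear constraint $\int \xi\, d\nu = 1$ (strong duality follows from Rockafellar's theory of integral functionals, with $\xi \equiv 1$ as a Slater point), this supremum equals $\inf_{\lambda \in \mathbb{R}} F(\lambda)$ where $F(\lambda) := \int h^*(u - \lambda)\, d\nu + \lambda$. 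Hence there exists $\lambda^* = \lambda^*(\ppsi)$ with $F(\lambda^*) \le \alpha$.

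Next I would establish two-sided universal bounds on $\lambda^*$. An upper bound is immediate: from $h^* \ge -h(0)$ and $F(\lambda^*) \le \alpha$ we get $\lambda^* \le \alpha + h(0)$. For a lower bound, I would first test the hypothesis with $\xi = \mathbf{1}_{\{u > k\}}/\nu(\{u > k\})$ to obtain the tail estimate
\[
\nu(\{u > k\})\, h\bigl(1/\nu(\{u > k\})\bigr) \ge k - \alpha - h(0)
\]
whenever $\nu(\{u > k\}) > 0$; super-coercivity of $h$ turns this into quantitative decay of $\nu(\{u > k\})$ as $k \to \infty$. Feeding this into the first-order optimality condition $\int (h^*)'(u - \lambda^*)\, d\nu = 1$, and using that $(h^*)'(s) \to \infty$ as $s \to \infty$, one derives a universal lower bound $\lambda^* \ge -C_0$ with $C_0 = C_0(\alpha, b, h)$.

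Finally I would set $K_1 := \alpha + h(0)$, so $K_1 \ge \lambda^*$. Since $F$ is convex with $F'(\lambda) = 1 - \int (h^*)'(u - \lambda)\, d\nu \in [0,1]$ for $\lambda \ge \lambda^*$, we have $F(K_1) - F(\lambda^*) \le K_1 - \lambda^*$, which rearranges to
\[
\int h^*(u - K_1)\, d\nu \le \alpha - \lambda^* \le \alpha + C_0 =: K_2.
\]
A direct check shows that $h^*((u)^+ - K_1)$ and $h^*(u - K_1)$ coincide pointwise: they agree on $\{u > K_1\}$ because $(u)^+ = u$ there, and on $\{u \le K_1\}$ because both arguments lie in $(-\infty, 0]$ where $h^*$ takes the constant value $-h(0)$. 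This yields the desired inequality \eqref{eq:lemUIconclusion}.

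The main obstacle is the universal lower bound on $\lambda^*$: without it, $K_2$ would still depend on $\ppsi$. This step is precisely where super-coercivity of $h$ is essential, as it converts the tail estimate on $u$ into sufficient growth of $(h^*)'$ to pin $\lambda^*$ from below; by contrast, merely coercive $h$ would not suffice.
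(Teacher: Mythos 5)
Your proposal takes a genuinely different route from the paper (Lagrangian duality with a scalar multiplier $\lambda$ for the normalization constraint, rather than the paper's direct testing/splitting argument), and the skeleton is appealing, but there is a real gap at exactly the place you flag as the main obstacle: the universal lower bound $\lambda^* \ge -C_0$ is asserted, not proved, and the sketched ingredients do not obviously close the argument. The first-order condition $\int (h^*)'(u-\lambda^*)\,d\nu = 1$ together with super-coercivity of $h$ only gives a tail bound of the form $\nu(u \ge S+\lambda^*) \le 1/(h^*)'(S)$; combined with $\int u\, d\nu = -\int b\, d\nu$, this bounds $\lambda^*$ from below \emph{only if} one already controls $\int u^+\, d\nu$ uniformly in $\ppsi$. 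That uniform bound is not free: testing the hypothesis with $\xi = \mathbf{1}_{\{u>0\}}/\nu(u>0)$ produces a term $\nu(u>0)^2\, h\bigl(1/\nu(u>0)\bigr)$ which is \emph{not} bounded as $\nu(u>0)\to 0$ (super-coercivity makes $\beta\,h(1/\beta)\to\infty$, and squaring does not help in general, e.g.\ $h(t)=e^t$). The paper's proof handles exactly this by a two-case argument on $\beta = \nu(\tilde\ppsi \ge 0)$, using the zero-mean condition to transfer mass to a set of measure $1/3$ in the small-$\beta$ case. Without something of that nature, your $\lambda^*$ could a priori drift to $-\infty$ along a sequence of admissible $\ppsi$, and then $K_2 = \alpha + C_0$ is not uniform.

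Two secondary issues worth noting. First, the strong duality step $\sup_{\xi}[\cdots] = \inf_\lambda F(\lambda)$ needs more than a citation: the primal lives in $L^\infty(\nu)$ whose dual is $ba(\nu)$, not $L^1(\nu)$, the optimal $\xi_\lambda \in \partial h^*(u-\lambda)$ need not be bounded, and $F$ may be $+\infty$ on part of its domain since $h^*$ can grow arbitrarily fast. The paper sidesteps this by never invoking strong duality; instead it works one-sidedly with a Borel selector $g$ of $\partial h^*$, truncations $\xi_n = g((\tilde\ppsi^+ - h(0))\wedge n)$, and monotone convergence — a pattern you would likely need to reproduce to make your duality rigorous. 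Second, the claimed pointwise identity $h^*((\ppsi-b)^+ - K_1) = h^*((\ppsi-b)-K_1)$ fails when $K_1 < 0$ (on $\{K_1 < u \le 0\}$ the left argument is $-K_1 > 0$ and the right argument is $u - K_1 > 0$, which generally give different values of $h^*$); this is easily fixed by taking $K_1 = \max(\alpha + h(0), 0)$, but as written the step is incorrect when $\alpha < -h(0)$.
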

\begin{proof}
We assume that $\nu$ has no atoms. This is without loss of generality and will simplify the notation in the proof. Further, we can assume wlog that $\alpha=0$ and $\int b \, d\nu =0$. Otherwise replace $h$ with $h + \alpha_+ + (\int b \,d\nu)_+$. As $(h + \alpha_+ + (\int b \,d\nu)_+)^\ast = h^\ast - \alpha_+ - (\int b \,d\nu)_+$, this replacement just causes a change of the constants $K_1$ and $K_2$ in \eqref{eq:lemUIconclusion}. Moreover, we introduce the shorthand notation $\tilde \ppsi := \ppsi -b$. 

The first step is to derive a bound on $\int  \tilde \ppsi^+ \,d\nu$. 
To that end, we distinguish two cases depending on the value of $\beta := \nu( \tilde \ppsi \ge 0 )$: In the case that $\beta \ge \frac{1}{3}$, we choose the test function $\xi = \beta^{-1} 1_{ \{ \tilde \ppsi \ge 0\} }$ in \eqref{eq:lemUIasn}. Using that $h$ is increasing we find
\[
\int \tilde{\ppsi}^+\, d\nu =  \beta \int \tilde{\ppsi} \xi \,d\nu  \le \beta \int h(\xi) \,d\nu = \beta(1-\beta) h(0) + \beta^2 h(1/\beta) \le  h(3). 
\]
To tackle the case that $\beta <\frac{1}{3}$, fix $t <0$ and $A \subseteq Y$ such that $\nu(A)=\frac{1}{3}$ and $\{ \tilde{\ppsi} \in (t,0) \} = A$.
Further, write $B := \{\tilde \ppsi < 0\} \setminus A$ and note that $\nu(B) \ge 1- \beta - \frac{1}{3} \ge \frac{1}{3} = \nu(A)$. As $\tilde \ppsi \le t$ on $B$, $\tilde \ppsi \ge t$ on $A$ and $\int \tilde\ppsi \, d\nu =0$, we find
\[
\int \tilde \ppsi^+ \, d\nu = - \int_{A \cup B} \tilde \ppsi \, d\nu \ge - 2 \int_A \tilde \ppsi \,d\nu.
\]
By using the test function $\xi = (\beta +\frac{1}{3})^{-1} 1_{B^c}$ in \eqref{eq:lemUIasn} we derive
\begin{align*}
    \int \tilde{\ppsi}^+\, d\nu  &\le 2 \left( \int \tilde{\ppsi}^+\, d\nu + \int_A \tilde{\ppsi}\, d\nu  \right)  = 2\left(\beta + \frac{1}{3}\right) \int \tilde \ppsi \xi \, d\nu \\
    &\le2\left(\beta + \frac{1}{3}\right) \int h(\xi) \, d\nu \le 2\left(\beta + \frac{1}{3}\right) h(3) \le 2h(3).
\end{align*}
Combining both cases and using again that $\int \tilde \ppsi \,d\nu =0$, we find
\begin{align}\label{eq:bound_psiminus}
    \| \ppsi^- \|_{L^1(\nu)} = \| \ppsi^+ \|_{L^1(\nu)} \le 2h(3).
\end{align}
In the next step, we show that for every non-negative, bounded Borel function $\xi$ 
\begin{align}\label{eq:lemUIbound2}
\int \tilde \ppsi^+ \xi \,d\nu \le \int h(\xi) + h(0)\xi \, d\nu + 3h(3).   
\end{align}
To this end, we distinguish two cases:

Case 1: If $\| \xi 1_{ \{ \tilde \ppsi \ge 0 \} } \|_{L^1(\nu)} \ge 1$, we set $\tilde \xi = \| \xi 1_{ \{ \tilde \ppsi \ge 0 \} } \|_{L^1(\nu)}^{-1} \xi 1_{ \{ \tilde \ppsi \ge 0 \} }$. By first applying \eqref{eq:lemUIasn} with the test function $\tilde \xi$ and then using that $h$ is increasing, convex, and non-negative, we find
\begin{align*}
    \int \tilde \ppsi^+ \xi \, d\nu  &=  \| \xi 1_{ \{ \tilde \ppsi \ge 0 \} } \|_{L^1(\nu)} \int \tilde \ppsi \tilde \xi \, d\nu \le \| \xi 1_{ \{ \tilde \ppsi \ge 0 \} } \|_{L^1(\nu)} \int h(\tilde \xi)  \, d\nu\\
    &\le \| \xi 1_{ \{ \tilde \ppsi \ge 0 \} } \|_{L^1(\nu)} h(0) + \int h(\xi) \, d\nu
    \le \int  h(\xi) +h(0)\xi \, d\nu.
\end{align*}

Case 2: If $\| \xi 1_{ \{ \tilde \ppsi \ge 0 \} } \|_{L^1(\nu)} < 1$, there is $r \in (0,1]$ such that $\tilde \xi := (\xi 1_{\{\tilde \ppsi \ge 0\}}) \vee r $ satisfies $\| \tilde \xi \|_{L^1(\nu)} = 1$. Applying \eqref{eq:lemUIasn} with the test function $\tilde \xi$, equation \eqref{eq:bound_psiminus}, and the fact that $h$ is increasing convex, yields the estimate
\begin{align*}
\int \tilde \ppsi^+ \xi \,d\nu \le  \int \tilde \ppsi \tilde \xi \,d\nu + r \| \tilde \ppsi^- \|_{L^1(\nu)} \le \int h(\tilde \xi) \,d\nu + 2r h(3) \le \int h(\xi) \, d\nu + 3h(3).
\end{align*}
This completes the proof of \eqref{eq:lemUIbound2}.

The next step is to use \eqref{eq:lemUIbound2} to derive a bound on $\int h^\ast(\tilde\ppsi^+ -h(0)) \, d\nu$.
Formally by exchanging integration with supremum, we have
\begin{align*}
    \int h^\ast(\tilde \ppsi^+-h(0)) \, d\nu &= \int \sup_{z \in [0,\infty)} \tilde\ppsi^+ (y) z - h(0)z -h(z) \, \nu(dy) \\
    &= \sup_{\xi \ge 0 \text{ Borel}} \int \tilde\ppsi^+(y) \xi(y) -  h(0)\xi(y) - h(\xi(y)) \, \nu(dy) \le 3h(3).
\end{align*}
In order to rigorously justify this calculation, let $g$ be a selector of $\partial h^\ast$, i.e., $g(t) \in \partial h^\ast (t)$ for every $t \in [0,\infty)$. This selector is well-defined (as $h$ is super-coercive, $\dom(h^\ast)=[0,\infty)$ and thus $\partial h^\ast(t) \neq \emptyset$ for every $t$), monotone and in particular Borel. Moreover, we have $tg(t)= h(g(t)) + h^\ast(t)$. For every $n \in \N$, let $\xi_n = g((\tilde \ppsi^+ -h(0)) \wedge n) $ and note that $0 \le \xi_n \le g(n)$, so we can apply \eqref{eq:lemUIbound2} and get 
\begin{align*}
    \int h^\ast ((\tilde \ppsi^+ -h(0)) \wedge n) \,d\nu = \int ((\tilde \ppsi^+ -h(0)) \wedge n) \xi_n - h(\xi_n) \, d\nu \le 3h(3).
\end{align*}
The integral $\int [h^\ast (\tilde \ppsi^+-h(0))]^- \,d\nu$ is well-defined, because $\tilde\ppsi^+ \in \cL^1(\nu)$ and $h^\ast$ is convex and thus bounded from below by an affine function. 
As $h^\ast$ is increasing (cf. \Cref{sec:app1}), we can therefore use monotone convergence to obtain that $\int h^\ast (\tilde \ppsi^+ -h(0)) \,d\nu  \le 3h(3)$. 
\end{proof}

\begin{proof}[Proof of \Cref{thm:Attainment}]
    Let $(\pphi_n,\ppsi_n)_n$ be a maximizing sequence for the dual problem, that is, $(\pphi_n,\ppsi_n) \in \Dadm{C}{\mu}{\nu}$ and $\mu(\pphi_n) + \nu(\ppsi_n) \nearrow \DW{C}{\mu}{\nu}.$ 
    Replacing $(\pphi_n,\ppsi_n)$ by $(\pphi_n+c,\ppsi_n-c)$ where $c \in \R$, leaves value and admissibility unchanged.
    Therefore, we can assume that $\nu(\ppsi_n)= 0$. 
    By applying the admissibility condition \eqref{eq:adm} with $\rho = \nu$, we find the upper bound
    \begin{equation*}
        \pphi_n(x)\leq C(x,\nu) \leq a(x)+\int b \, d\nu + h(1).
    \end{equation*}
    Hence, integrating w.r.t.\ $\mu$ leads to
    \begin{equation*}
        \int \pphi_n \, d\mu \leq \int a \, d\mu + \int b \, d\nu + h(1),
    \end{equation*}
    which shows that $(\pphi_n)_n$ is bounded in $L^1(\mu)$. 
    By Koml\'os' lemma \cite{Ko67}, there is a sequence of convex combinations $(\hat \pphi_n)_n$ of $(\pphi_n)_n$ such that $\hat{\pphi}_n \to \pphi$ $\mu$-a.s.\ and $\pphi \in \cL^1(\mu)$.
    By potentially renaming the sequence, we assume wlog that $\pphi_n \to \pphi$ $\mu$-a.s.
    
    Next, fix some $x_0\in X$ for which $\pphi_n(x_0)\to \pphi(x_0)$. 
    As $(\pphi_n,\ppsi_n)$ is admissible, we get
    \begin{equation*}
        \int \ppsi_n \, d\rho \leq C(x_0,\rho) - \pphi_n(x_0)\leq a(x_0)-\pphi_n(x_0)+\int b \, d\rho + \int h\Big(\frac{d\rho}{d\nu}\Big)\, d\nu,
    \end{equation*}
    for every $\rho \in \P_p(Y)$ with $\|\frac{d\rho}{d\nu}\|_{L^\infty(\nu)} < \infty$.
    As $(\pphi_n(x_0))_n$ is convergent, there is $\alpha \in \R$ such that
    \begin{equation}\label{eq:inequality_psi_n-b}
        \int \ppsi_n - b\, d\rho \leq \alpha + \int h\Big(\frac{d\rho}{d\nu}\Big)\, d\nu.
    \end{equation}
    Hence, for every bounded $\xi\geq 0$ with $\int \xi \, d\nu=1$, we have
    \begin{equation}
        \int (\ppsi_n-b)\xi \, d\nu \leq \alpha + \int h(\xi)\, d\nu.
    \end{equation}
    
    In case that $h$ is not supercoercive, we may replace $h$ by $h + |\cdot|^2$.
    By Lemma \ref{lemma:uniform_integrability_positive_part_max_seq}, there exist constants $K_1, K_2$ such that $\int h^\ast((\ppsi_n -b)^+-K_1) \, d \nu \le K_2 $ for every $n \in \N$. 
    By \Cref{lem:superlinear}, $h^\ast$ is super-coercive, thus the de La Vallée Poussin criterion for uniform integrability yields that the sequences $((\ppsi_n -b)^+-K_1)_n$ and, in particular, $(\ppsi_n ^+)_n$ are uniformly integrable.

    As $\int \ppsi_n\, d\nu=0$, the sequence $(\ppsi_n)_n$ is bounded in $L^1(\nu)$. Therefore, Koml\'os' lemma guarantees the existence of $g \in L^1(\nu)$ and a sequence $(\hat g_n)_n$ consisting of forward convex combinations of $(g_n)_n$ such that $\hat g_n \to g$ $\nu$-a.s. To simplify notation, we rename the sequence $(\hat g_n)_n$ to $(g_n)_n$. Using Fatou's Lemma we obtain
    \begin{equation}
        \int \pphi \, d\mu + \int \ppsi \, d\nu \geq \limsup\limits_{n}\int \pphi_n \, d\mu + \limsup\limits_{n} \int \ppsi_n \, d\nu = \DW{C}{\mu}{\nu}.
    \end{equation}
    
    It remains to show that the pair $(\pphi,\ppsi)$ is admissible. By Egorov's theorem, there is an increasing sequence $(K_N)_N$ of compact subsets of $Y$ such that $\nu(\bigcup_N K_N)=1$ and $\ppsi_n\to\ppsi$ uniformly on $K_N$ for every $N \in \N$.
    We redefine $\pphi$ to be $-\infty$ on the $\mu$-null set where $(\pphi_n)_n$ does not converge to $\pphi$.
    Similarly, we change $\ppsi$ to $-\infty$ on the $\nu$-null set $(\bigcup_N K_N)^C$. 

    To conclude that $(f,g)$ is an optimal dual pair, it suffices to show that $(f,g)$ is admissible. Specifically, we need to show that for $(x,\rho) \in X \times \P_p(Y)$ satisfying $f(x) \in \R$ and $\ppsi \in L^1(\rho)$ we have
    \[
        \pphi(x)+ \rho(\ppsi) \leq C(x,\rho).
    \]
   
   First suppose that there is an $N \in \N$ such that $\rho(K_N)=1$. As $g \in \cL^1(\nu)$ and $g_n \to g$ uniformly on a set of full measure, we have $g_n \in \cL^1(\nu)$ and 
    \begin{equation*}
    \pphi(x)+ \rho(\ppsi) = \lim\limits_{n}\pphi_n(x)+\rho(\ppsi_n) \le C(x,\rho).
    \end{equation*}

    Suppose next that $\rho(\bigcup_N K_N)=1$. Define $\rho_N := \frac{1}{\rho(K_N)}\rho_{\mid K_N}$ and note that $\ppsi \in \cL^1(\rho_N)$. 
    By the dominated convergence theorem, we have $\rho_N(\ppsi)\to \rho(\ppsi)$. 
    Invoking the continuity property \eqref{eq:WOTcont} yields
    \[
        \pphi(x) + \rho(\ppsi) = \lim_N \pphi(x) + \rho_N(\ppsi)
        \le \limsup_N C(x,\rho_N) \le C(x,\rho).
    \]

 In the case that $\rho(\bigcup_N K_N)<1$, we have $g \notin \cL^1(\rho)$, so there is nothing to show. 
\end{proof}

\begin{remark}
If the cost function $C$ satisfies the stronger growth condition $
C(x,\rho) \le a(x) + \rho(b),
$ for some $a \in L^1(\mu)$, $b \in L^1(\nu)$, then the argument for the uniform integrability of the maximizing sequence $(\pphi_n,\ppsi_n)_n$ in the proof of Theorem~\ref{thm:Attainment} simplifies, namely there exists a constant $\alpha$ such that $\pphi_n \le \alpha + a$ and $\ppsi_n \le \alpha +b$. In particular, there are dual maximizers $(\pphi,\ppsi)$ which satisfy $\pphi\le \alpha + a$ and $\ppsi \le \alpha +b$. \hfill$\diamond$
\end{remark}

\begin{example}[Necessity of assumption \eqref{eq:WOTcont}]\label{exmpl:asmp_WOTcont}
    Even in the case of a one element space $X$ and compact $Y$ we find a bounded, convex and lsc cost function that does not satisfy assumption \eqref{eq:WOTcont} and 
    for which we have no dual attainment. Let $X=\{0\},Y=[0,1]$ and take $\sigma\in\cP([0,1])$ with $\sigma\ll\lambda$ and $\lVert\frac{d\sigma}{d\lambda}\rVert_\infty = \infty$, where $\lambda$ denotes the Lebesgue measure. Consider the cost function
    \begin{align}
        C(0,\rho):=C(\rho):=1-\frac{1}{\lVert\frac{d\sigma}{d\rho}\rVert_\infty},
    \end{align}
    where we convene that $\lVert\frac{d\sigma}{d\rho}\rVert_\infty = \infty$ if $\sigma$ is not absolutely continuous w.r.t.\ $\rho$.
    Next, consider Borel sets $Y_n\subset[0,1]$ with $\sigma(Y_n)<1$ and $\bigcup_n Y_n = [0,1]$. As $\sigma$ is not absolutely continuous w.r.t. $\sigma_n$, we have $C(\tfrac{1}{\sigma(Y_n)}\sigma_{|Y_n}) = 1 > 0 = C(\sigma),$
showing that $C$ does not satisfy Assumption \eqref{eq:WOTcont}.

    Assume for the sake of contradiction that we have dual attainment, i.e., there is $\ppsi:[0,1]\to[-\infty,\infty)$ satisfying $\int\ppsi\, d\lambda = 1=\WOT{C}{\delta_0}{\lambda}$ and $\int\ppsi\,d\rho\leq C(\rho)$ for all $\rho\in \cP([0,1])$ with $\ppsi \in L^1(\rho)$. 
    Choosing $\rho=\delta_y$ implies $\ppsi(y)\leq C(\delta_y)=1$ for every $y\in[0,1]$ with $\ppsi(y) > -\infty$ and, thus, $\ppsi \le 1$. 
    Next, choosing $\rho = \sigma$ shows that $\int\ppsi\, d\sigma\leq C(\sigma) = 0$. Consider the set $A:=\{-\infty < \ppsi<1\}$. 
    On the one hand, $\int\ppsi\, d\lambda = 1$ as well as $\ppsi\leq 1$ imply $\lambda(A)=0$. 
    On the other hand, $\int\ppsi\, d\sigma \leq 0$ implies $\sigma(A)>0$. This is a contradiction to $\sigma \ll \lambda$. Therefore, there are no dual optimizers which shows that assumption \eqref{eq:WOTcont} cannot be omitted in \Cref{thm:FTWOTintro}. \hfill$\diamond$
\end{example}

We conclude this part with a result that guarantees the existence of particularly regular dual maximizers, provided that $C$ is Lipschitz in its second argument.

\begin{corollary}
Suppose that \Cref{asn} is satisfied with $p=1$ and that $C(x,\cdot)$ is $L$-Lipschitz w.r.t.\ $\mathcal W_1$-distance. Then we have
    \[
        \WOT{C}{\mu}{\nu} =
        \max_{\substack{(\pphi,\ppsi) \in \Dadm{C}{\mu}{\nu} \\ \ppsi \textrm{ $L$-Lipschitz}}} \mu(f) + \nu(g). 
    \]
\end{corollary}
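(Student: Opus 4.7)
The plan is to start from a dual optimizer $(\pphi, \ppsi) \in \Dadm{C}{\mu}{\nu}$ produced by \Cref{thm:FTWOT_mainbody} (whose hypotheses are readily verified: $L$-Lipschitz continuity in $\W_1$ combined with $\nu \in \cP_1(Y)$ yields both \eqref{eq:WOTbound} and \eqref{eq:WOTcont}), assume without loss of generality $\pphi = \ppsi^C$ via \Cref{lem:C-trafo}\eqref{lem:C-trafo_item3}, and then replace $\ppsi$ by its $L$-Lipschitz upper envelope
\[
\tilde\ppsi(y) := \sup_{y^* \in Y}\bigl[\ppsi(y^*) - L\,d_Y(y, y^*)\bigr].
\]
I would first verify that $\tilde\ppsi$ is $L$-Lipschitz by the standard triangle-inequality argument and that it is finite-valued: admissibility at Dirac masses yields the pointwise bound $\ppsi(y) \le C(x_0, \delta_y) - \pphi(x_0)$ for any $x_0$ with $\pphi(x_0) \in \mathbb R$, and since $y \mapsto C(x_0, \delta_y)$ is $L$-Lipschitz by hypothesis, $\ppsi$ (and hence $\tilde\ppsi$) is dominated by an $L$-Lipschitz function.

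The key step is to establish the identity $\tilde\ppsi^C = \ppsi^C$. The inequality $\tilde\ppsi^C \le \ppsi^C$ is immediate from $\tilde\ppsi \ge \ppsi$. For the reverse, fix $\rho \in \cP_1(Y)$ and $\epsilon > 0$ and apply the Jankov--von Neumann measurable selection theorem to the Borel set $\{(y, y^*) : \ppsi(y^*) - L\,d_Y(y, y^*) \ge \tilde\ppsi(y) - \epsilon\}$, which has non-empty sections by definition of $\tilde\ppsi$, to obtain a universally measurable $y^*_\epsilon : Y \to Y$ realising the supremum up to $\epsilon$. Setting $\rho' := (y^*_\epsilon)_\# \rho$, the diagonal coupling $(\id, y^*_\epsilon)_\# \rho \in \cpl(\rho, \rho')$ gives $\W_1(\rho, \rho') \le \int d_Y(y, y^*_\epsilon(y))\,\rho(dy)$, so integrating the defining inequality of $y^*_\epsilon$ yields
\[
\rho(\tilde\ppsi) \le \rho'(\ppsi) - L\,\W_1(\rho, \rho') + \epsilon.
\]
Combining this with the $L$-Lipschitz estimate $C(x, \rho) \ge C(x, \rho') - L\,\W_1(\rho, \rho')$ produces
\[
C(x, \rho) - \rho(\tilde\ppsi) \ge C(x, \rho') - \rho'(\ppsi) - \epsilon \ge \ppsi^C(x) - \epsilon,
\]
and taking the infimum over $\rho$ and then letting $\epsilon \downarrow 0$ gives $\tilde\ppsi^C \ge \ppsi^C$.

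Once $\tilde\ppsi^C = \ppsi^C$ is established, \Cref{lem:C-trafo}\eqref{lem:C-trafo_item1} delivers admissibility of $(\ppsi^C, \tilde\ppsi) = (\tilde\ppsi^C, \tilde\ppsi)$; since $\tilde\ppsi \ge \ppsi$ pointwise, $\nu(\tilde\ppsi) \ge \nu(\ppsi)$, whence $\mu(\ppsi^C) + \nu(\tilde\ppsi) \ge \mu(\ppsi^C) + \nu(\ppsi) = \DW{C}{\mu}{\nu}$; by weak duality (\Cref{lem:weakDuality}) this inequality is tight, so the maximum in the corollary is attained by the $L$-Lipschitz pair $(\ppsi^C, \tilde\ppsi)$, finishing the argument.

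The main obstacle is the combined measurability and integrability in the selection step, where one needs $\rho'$ to lie in $\cP_1(Y)$ and $\rho'(\ppsi)$ to be a well-defined real number so that the chain of inequalities genuinely takes place in $\R$. I would handle this by first running the argument on the truncated envelopes $\tilde\ppsi_R(y) := \sup_{y^* : d_Y(y, y^*) \le R}[\ppsi(y^*) - L\,d_Y(y, y^*)]$, for which any $\epsilon$-selection automatically satisfies $d_Y(y, y^*_\epsilon(y)) \le R$ so that $\rho' \in \cP_1(Y)$ and $\rho'(\ppsi) \le \rho'(K_0) < \infty$ (with $K_0$ an $L$-Lipschitz majorant of $\ppsi$), and then passing to the limit $R \to \infty$ via monotone convergence, using the common upper bound $K_0 \in \cL^1(\rho)$ to ensure $\rho(\tilde\ppsi_R) \to \rho(\tilde\ppsi)$ in the final estimate.
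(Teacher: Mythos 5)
Your argument is correct and essentially mirrors the paper's: both construct the $L$-Lipschitz envelope $\tilde\ppsi(y) = \sup_{z}\bigl[\ppsi(z) - L\,d_Y(y,z)\bigr]$, pick an $\epsilon$-approximate measurable selector to push measures forward, and exploit the $\W_1$-Lipschitz bound on $C$ to absorb the transport cost incurred by the push-forward. Your packaging through the identity $\tilde\ppsi^C = \ppsi^C$ is a cosmetic variant of the paper's direct verification that $(\pphi,\tilde\ppsi)$ remains admissible, and the extra care you take about finiteness of $\tilde\ppsi$ (via admissibility at Dirac masses) and integrability in the selection step correctly fills in details the paper leaves implicit.
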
 
\begin{proof}
    Let $(\pphi,\ppsi) \in \Dadm{C}{\mu}{\nu}$ and define the $L$-Lipschitz function $\tilde \ppsi$ by
    \[
        \tilde \ppsi(y) = \sup_{z \in  Y} \ppsi(z) - L d_ Y(y,z).
    \]
    Fix $\epsilon > 0$ and pick a measurable selector off $T$ with $\tilde \ppsi(y) \le \ppsi(T(y)) - L d_Y(y,T(y)) + \epsilon$. 
    Note that $\ppsi \le \tilde \ppsi$.
    Therefore, the claim follows immediately from \Cref{thm:Attainment} if we can show that $(\pphi,\tilde\ppsi)$ is also admissible.\footnote{Indeed in the present Lipschitz case we could avoid Theorem \ref{thm:Attainment} by invoking an Arzel\`a--Ascoli argument.}
    To this end, fix $(x,\rho) \in  X \times \mathcal P_1( Y)$ and write $\tilde \rho = T_\# \rho$.
    Using admissibility of $(\pphi,\ppsi)$ we get
    \begin{align*}
        \pphi(x) + \rho(\tilde \ppsi) \le
        \pphi(x) + \tilde \rho(\ppsi) - L\mathcal W_1(\rho,\tilde \rho) + \epsilon 
        \le C(x,\tilde \rho) - L\mathcal W_1(\rho,\tilde \rho) + \epsilon \le C(x,\rho) + \epsilon.
    \end{align*}
    As $\epsilon > 0$ was arbitrary, $(f,\tilde g)$ is admissible.
\end{proof}

\subsection{$C$-monotonicity of optimal couplings}
The dual attainment result allows us to easily recover, under the current set of assumptions, the result from \cite[Theorem~5.3]{BaBePa18} that $C$-monotonicity is a necessary optimality criterion.

\begin{definition}
A set $\Gamma \subset X \times \cP(Y)$ is called $C$-monotone if for every $(x_1,\rho_1),\dots, (x_n,\rho_n) \in \Gamma$ and all $\tilde{\rho}_1, \dots, \tilde{\rho}_n\in \cP(Y)$ with $\sum_{i=1}^n \rho_i = \sum_{i=1}^n \tilde{\rho}_i$ it holds 
\[
\sum_{i=1}^n C(x_i,\rho_i) \le \sum_{i=1}^n C(x_i, \tilde{\rho}_i).
\]
A coupling $\pi \in \cpl(\mu,\nu)$ is called $C$-monotone if $\mu(\{x \in X : (x, \pi_x) \in \Gamma \})=1$.
\end{definition}

\begin{corollary} \label{cor:Cmonotonicity}
Suppose that \Cref{asn}, \eqref{eq:WOTbound} and \eqref{eq:WOTcont} are satisfied. Then, every optimal $\pi \in \cpl(\mu,\nu)$ is $C$-monotone. 
\end{corollary}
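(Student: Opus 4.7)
The plan is to apply the dual attainment result \Cref{thm:Attainment} to produce an optimal pair $(\pphi, \ppsi) \in \Dadm{C}{\mu}{\nu}$ and use it to exhibit an explicit $C$-monotone contact set. Specifically, I would define
\[
\Gamma := \{(x, \rho) \in X \times \cP_p(Y) : \ppsi \in \cL^1(\rho) \text{ and } C(x, \rho) = \pphi(x) + \rho(\ppsi)\},
\]
and argue in two steps: first, that every primal optimizer is concentrated on $\Gamma$ in the sense of the definition; second, that $\Gamma$ is itself $C$-monotone.

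For the first step, let $\pi \in \cpl(\mu, \nu)$ be primal optimal. The complementary slackness criterion \Cref{prop:slack} applied to $(\pi, (\pphi, \ppsi))$ gives $C(x, \pi_x) = \pphi(x) + \pi_x(\ppsi)$ for $\mu$-a.e.\ $x$, and inspecting its proof one sees that $\ppsi \in \cL^1(\pi_x)$ also holds $\mu$-a.s. Consequently $(x, \pi_x) \in \Gamma$ on a set of full $\mu$-measure, which is exactly what the definition of $C$-monotonicity for $\pi$ requires, once $\Gamma$ is known to be $C$-monotone.

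For the second step, fix $(x_1, \rho_1), \ldots, (x_n, \rho_n) \in \Gamma$ and $\tilde\rho_1, \ldots, \tilde\rho_n \in \cP(Y)$ with $\sum_i \rho_i = \sum_i \tilde\rho_i$. The key technical observation is an integrability transfer: $\ppsi \in \cL^1(\rho_i)$ for every $i$ implies $\ppsi \in \cL^1(\tilde\rho_i)$ for every $i$, because $|\ppsi|$ is integrable against the positive measure $\sum_i \rho_i = \sum_i \tilde\rho_i$ and each $\tilde\rho_i$ is dominated by this sum. This allows me to invoke admissibility of $(\pphi, \ppsi)$ at each pair $(x_i, \tilde\rho_i)$, and summing yields
\[
\sum_{i=1}^n C(x_i, \tilde\rho_i) \;\ge\; \sum_{i=1}^n \pphi(x_i) + \sum_{i=1}^n \tilde\rho_i(\ppsi) \;=\; \sum_{i=1}^n \pphi(x_i) + \sum_{i=1}^n \rho_i(\ppsi) \;=\; \sum_{i=1}^n C(x_i, \rho_i),
\]
where the middle equality uses the marginal identity $\sum_i \rho_i = \sum_i \tilde\rho_i$ applied to $\ppsi$, and the last equality uses the defining identity on $\Gamma$.

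I do not expect a substantive obstacle, because the bulk of the work has already been absorbed into \Cref{thm:Attainment}. The one place that warrants care is the integrability transfer $\ppsi \in \cL^1(\rho_i) \Rightarrow \ppsi \in \cL^1(\tilde\rho_i)$, since admissibility of a dual pair is only a pointwise bound on the set $\{(x, \rho) : \ppsi \in \cL^1(\rho)\}$; however, this transfer is a direct measure-theoretic consequence of the marginal identity as indicated above.
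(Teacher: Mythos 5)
Your proposal is correct and follows essentially the same route as the paper: take a dual optimizer $(\pphi,\ppsi)$ from \Cref{thm:Attainment}, define $\Gamma$ as the contact set where $C(x,\rho)=\pphi(x)+\rho(\ppsi)$, invoke \Cref{prop:slack} to place $\pi$ on $\Gamma$, and use admissibility together with $\sum_i\rho_i=\sum_i\tilde\rho_i$ to verify $C$-monotonicity of $\Gamma$. Your explicit observation that $\ppsi\in\cL^1(\tilde\rho_i)$ (and likewise that $\tilde\rho_i\in\cP_p(Y)$) follows by domination from $\sum_i\rho_i=\sum_i\tilde\rho_i$ is a worthwhile clarification of a step the paper leaves implicit, but it does not change the argument.
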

\begin{proof}
Let $\pi$ be a primal and $(\pphi, \ppsi)$ a dual optimizer. Set 
$$
\Gamma := \{ (x,\rho) \in X \times \cP_p(Y) : C(x,\rho) = \pphi(x) + \rho(\ppsi)\}.
$$ 
The complementary slackness criterion (see \Cref{prop:slack}) yields $\mu(\{x \in X : (x, \pi_x) \in \Gamma \})=1$. To see that $\Gamma$ is $C$-monotone, let $(x_1,\rho_1),\dots, (x_n,\rho_n) \in \Gamma$ and let $\tilde{\rho}_1, \dots, \tilde{\rho}_n\in \cP_p(Y)$ be competitors satisfying $\sum_{i=1}^n \rho_i = \sum_{i=1}^n \tilde{\rho}_i$. 
Then,
\[
    \sum_{i=1}^n C(x_i,\rho_i) = \sum_{i=1}^n \pphi(x_i) + \rho_i(\ppsi) = \sum_{i=1}^n \pphi(x_i) + \tilde\rho_i(\ppsi)\le \sum_{i=1}^n C(x_i, \tilde{\rho}_i). \qedhere
\]
\end{proof}

We notice that $C$-monotonicity requires relatively strong assumptions in order to be a sufficient condition for optimality. Indeed, even if we are in the classical ``linear'' OT case $C(x,\rho) = \int c(x,y) \, \rho(dy)$, $C$-monotonicity does not imply $c$-cyclical monotonicity see \cite[Example 5.2]{BaPa20}. Moreover, while $C$-monotonicity as a necessary condition for optimality is used for instance in \cite{BaBeHuKa20, GoJu18}, we are not aware of applications of $C$-monotonicity as a sufficient condition in WOT and do not pursue it further in this paper.

 \subsection{Non convex costs and duality for relaxed WOT} \label{sec:Relaxed_Primal_Problem}

 As noted in the introduction, duality can fail if $C$ is not convex in the second argument, even in otherwise very regular situations:
 \begin{example}\label{ex:NoDuality}
 Let $(X, \mu)=(\{0\}, \delta_0)$ and $ (Y, \nu)=(\{-1,+1\}, (\delta_{-1}+\delta_{+1})/2)$. Consider the cost $C(0, m \delta_{-1} + (1-m) \delta_{+1})= 1-|2m -1|$. Then the primal value is $1$ while the dual value is $0$.\hfill$\diamond$
\end{example} 
 A possible remedy is to suitably enrich the space of admissible transport plans, by allowing for initial randomization. This can be done either in measure theoretic or in probabilistic language:

 \begin{enumerate}
\item  \emph{Probabilistically}, the primal weak transport problem can be formulated as \begin{align}\label{eq:probFormulation}
{\rm WT}_C(\mu,\nu)  = \inf_{X_1\sim\mu, X_2\sim \nu}\EE [C(X_1, \law(X_2| X_1))] .\end{align}
The right-hand side of \eqref{eq:probFormulation} can be relaxed by allowing to take the minimum over all adapted processes $(X_t)_{t=1,2}$ defined on some stochastic basis $(\Omega, \F, \mathbb{P}, (\F_t)_{t=1,2})$, where $\F_1$ could be larger than $\sigma(X_1)$, i.e.
\[ {\overline{{\rm WT}}}_C(\mu, \nu):= \inf_{\substack{(\Omega, \F, \mathbb{P}, (\F_t)_{t=1}^2,(X_t)_{t=1}^2), \\ X_1\sim\mu, \, X_2\sim \nu}}\EE [C(X_1, \law(X_2| \F_1))]. \]

\item For the \emph{measure theoretic} formulation, we introduce the set of \emph{lifted transport plans}. Specifically, for $\mu \in \cP_p(X), \nu \in \cP_p(Y)$, we write $\Lambda(\mu, \nu)$ for the collection of $P \in \P_p(X\times \P_p(Y))$ for which the first marginal is $\mu$ (i.e.\ ${\textrm{pr}_{X}}_\# P= \mu$) and the intensity of the second marginal is $\nu$ (i.e.\ $\int \!\rho(g)\, ({\textrm{pr}_{\cP(Y)}}_\#P)(d\rho) = \nu(g)$ for every $g\in C_b(Y)$). 
Using this notion, the relaxed weak optimal transport problem can be written as 
\begin{align}\label{eq:def:WTR}
    {\overline{{\rm WT}}}_C(\mu, \nu)= \inf_{Q \in \Lambda(\mu, \nu)}\int C(x,\rho)\, Q(dx, d \rho).
\end{align}
Transport plans can be identified with lifted transport plans using the map
\begin{align}\label{eq:def:J}
    J: \cpl(\mu, \nu) \to \Lambda(\mu, \nu) ,\quad J(\pi) :=( x\mapsto (x,\pi_x))_{\#}\mu.
\end{align}
\end{enumerate}
As every transport plan $\pi$ induces a lifted plan $J(\pi)$, we always have 
\begin{align}\label{eq:RelaxIneq}
    \overline{\rm WT}_C(\mu,\nu) \le {\rm WT}_C(\mu,\nu). 
\end{align}
Note that lifted transport plans of the form $J(\pi)$ are of Monge type in the sense that they are concentrated on the graph of a function. Hence, 
${\overline{{\rm WT}}}_C(\mu, \nu)$ can be considered as a relaxation of ${\rm WT}_C(\mu, \nu)$ in the same way as the Kantorovich formulation relaxes the Monge formulation of the transport problem. In particular, it was established in \cite[Lemma~2.1]{BaBePa18} that in the standard setting, where $C(x,\cdot)$ is convex, the value of the weak transport problem and its relaxation coincide, i.e.
\begin{align}\label{eq:Relax_Eq}
    \overline{\rm WT}_C(\mu,\nu) = {\rm WT}_C(\mu,\nu). 
\end{align}

The relaxed WOT problem is of particular interest in the setting of non-convex cost introduced in \Cref{ssec:noconvexity}. A major reason for this is that primal attainment is guaranteed for $\overline{\rm WT}_C(\mu,\nu)$, while failing in general for ${\rm WT}_C(\mu,\nu)$ in the non-convex case. Further note that the inequality \eqref{eq:RelaxIneq} is in general strict. 
However, it was shown in \cite[Proposition 3.8]{AcBePa20} that if $\rho \mapsto C(x,\rho)$ is lsc
\begin{equation}
    \label{eq:Cstarstar}
    \overline{\rm WT}_C(\mu,\nu) = {\rm WT}_{\overline{C}}(\mu,\nu),
\end{equation}
where $\overline{C}$ is defined such that $\overline{C}(x,\cdot)$ is the lsc convex hull of $C(x,\cdot)$ for every $x \in X$.
Furthermore, \cite[Theorem 3.9]{AcBePa20} states that if $\mu$ has no atoms and $C$ is continous with bounded $p$-growth, then 
\begin{align}\label{eq:Cstarstarcont}
    \overline{\rm WT}_C(\mu,\nu) =  \overline{\rm WT}_{\overline{C}}(\mu,\nu) = {\rm WT}_{C}(\mu,\nu).
\end{align}

We recall that the initial example of this section showed that duality for the non-relaxed WOT problem fails. However, it is true for its relaxed version, as was shown in \cite[Theorem~3.1]{BaBePa18}\footnote{We note that it was shown there for cost functions that are jointly lsc. The generalization of this proof to cost functions that are jointly Borel and lsc in the second argument follows line by line as in the proof of \Cref{thm:FTWOT_mainbody}.}. We obtain the following variant of the fundamental theorem of WOT.

\begin{theorem}\label{thm:FTWOT_mainbody_L}
    Let $\mu \in \cP(X)$, $\nu \in \cP_p(Y)$ and let $C:X\times \P_p(Y)\to \R \cup \{+\infty\}$ be measurable and $\rho\mapsto C(x,\rho)$
 lsc for the $p$-weak convergence on $\P_p(Y)$. 
    Suppose that there are $a_\ell \in \mathcal L^1(\mu), b_\ell \in \mathcal L^1(\nu)$ such that $C(x,\rho) \ge a_\ell(x) + \rho(b_\ell)$, then:
    \begin{enumerate}[\upshape(i)]
    \item\label{it:FTWOT_main1_L} (primal attainment) ${\overline{{\rm WT}}}_C(\mu,\nu)$ is attained,
    \item\label{it:FTWOT_main2_L} (duality) we have ${\overline{{\rm WT}}}_C(\mu,\nu) = \DW{C}{\mu}{\nu}$ where 
        \begin{align}\label{eq:WOTdual_L}
            \DW{C}{\mu}{\nu}=
            \sup\{\mu(\ppsi^C)+\nu(\ppsi): \ppsi \in \mathcal L^1(\nu) \text{ s.t.\ } \ppsi^C \in \mathcal L^1(\mu) \}.
        \end{align}
    \item\label{it:FTWOT_main3_L} (dual attainment) If $C$ satisfies conditions \eqref{eq:WOTbound} and \eqref{eq:WOTcont},
    then $D_C(\mu,\nu)$ is attained.
    \item\label{it:FTWOT_main4_L} (complementary slackness) If ${\overline{{\rm WT}}}_C(\mu,\nu) < \infty$, a pair of candidates $(\pi, (\pphi, \ppsi))$ is optimal if and only if
    \begin{align}\label{eq:WOTCS_L}
        C(x,\rho)= \pphi(x)+\rho(\ppsi), \quad P(dx,d\rho)\text{-a.s.}
    \end{align}
    In this case, $\pphi(x) = \ppsi^C(x)$ $\mu$-a.s.
\end{enumerate}
\end{theorem}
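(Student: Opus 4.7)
The plan is to reduce each claim to the convex setting of Theorem~\ref{thm:FTWOT_mainbody} by passing to the pointwise lsc convex hull $\overline{C}(x,\cdot)$ of $C(x,\cdot)$. Note that $\overline{C}$ inherits the affine lower bound $a_\ell(x)+\rho(b_\ell)$ (this bound is itself lsc and convex in $\rho$), is Borel measurable (see \cite{AcBePa20}), and is by construction convex and lsc in its second argument. Hence, whenever $\overline{\rm WT}_C(\mu,\nu)<\infty$, the identity \eqref{eq:Cstarstar} yields ${\rm WT}_{\overline{C}}(\mu,\nu)=\overline{\rm WT}_C(\mu,\nu)<\infty$, so $\overline{C}$ satisfies \Cref{asn}.

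For duality (ii), I would first prove the relaxed weak duality bound $\DW{C}{\mu}{\nu}\le\overline{\rm WT}_C(\mu,\nu)$ exactly as in \Cref{lem:weakDuality}, now using the marginal identities defining $\Lambda(\mu,\nu)$ to rewrite $\int(\pphi(x)+\rho(\ppsi))\,P(dx,d\rho)=\mu(\pphi)+\nu(\ppsi)$ for $P\in\Lambda(\mu,\nu)$. Combining this with the trivial inclusion $\Dadm{\overline{C}}{\mu}{\nu}\subseteq\Dadm{C}{\mu}{\nu}$ (which follows from $\overline{C}\le C$), \eqref{eq:Cstarstar}, and \Cref{thm:noGap} applied to $\overline{C}$, one obtains the chain
\[
    \DW{\overline{C}}{\mu}{\nu}\le\DW{C}{\mu}{\nu}\le\overline{\rm WT}_C(\mu,\nu)={\rm WT}_{\overline{C}}(\mu,\nu)=\DW{\overline{C}}{\mu}{\nu},
\]
forcing equality throughout; the representation \eqref{eq:WOTdual_L} then follows from the analogous representation for $\DW{\overline{C}}{\mu}{\nu}$ together with the incidental identity $\ppsi^C=\ppsi^{\overline{C}}$. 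Primal attainment (i) is obtained either by repeating the $\hat{\W}_p$-compactness argument of \Cref{thm:noGap} directly on $\Lambda(\mu,\nu)$ (its sub-level sets under the cost are compact), or by lifting an optimizer $\pi^\ast\in\cpl(\mu,\nu)$ of ${\rm WT}_{\overline{C}}$ through a Borel selection $x\mapsto Q_x\in\cP(\cP_p(Y))$ of measures with barycenter $\pi_x^\ast$ realising $\overline{C}(x,\pi_x^\ast)=\int C(x,\rho)\,Q_x(d\rho)$.

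For dual attainment (iii), I claim that the proof of \Cref{thm:Attainment} applies verbatim to $C$. Indeed, convexity of $C$ in its second argument is never explicitly used there: admissibility is preserved under convex combinations because $\pphi(x)+\rho(\ppsi)\le C(x,\rho)$ is affine in the pair $(\pphi,\ppsi)$, which is all the Koml\'os arguments for $(\pphi_n)_n$ and $(\ppsi_n)_n$ require; the $L^1(\mu)$-bound on $\pphi_n$ and the uniform integrability of $\ppsi_n^+$ come from \eqref{eq:WOTbound} via \Cref{lemma:uniform_integrability_positive_part_max_seq}; and the verification that the limit pair lies in $\Dadm{C}{\mu}{\nu}$ uses only \eqref{eq:WOTcont}.

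For complementary slackness (iv), I would mirror \Cref{prop:slack} with $\pi$ replaced by $P\in\Lambda(\mu,\nu)$. Joint optimality combined with the marginal identities for $P$ gives
\[
    0=\int\big(C(x,\rho)-\pphi(x)-\rho(\ppsi)\big)\,P(dx,d\rho),
\]
and pointwise admissibility then forces $C(x,\rho)=\pphi(x)+\rho(\ppsi)$ $P$-a.s.; conversely, integrating the slackness condition against $P$ and invoking the duality established in (ii) yields joint optimality. The identification $\pphi=\ppsi^C$ $\mu$-a.s.\ is inherited from \Cref{lem:C-trafo}\eqref{lem:C-trafo_item3}. The main delicate point throughout the argument is the admissibility transfer between $C$ and $\overline{C}$, which hinges on the fact that the functional $\rho\mapsto\rho(\ppsi)$ is affine and therefore compatible with the lsc convex hull operation.
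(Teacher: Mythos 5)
Your route is genuinely different from the one the paper takes. The paper proves \Cref{thm:FTWOT_mainbody_L}\eqref{it:FTWOT_main1_L}--\eqref{it:FTWOT_main2_L} by citing \cite[Theorem~3.1]{BaBePa18} for the relaxed duality and observing that the proof there carries over line by line once one uses the change-of-topology trick from \Cref{thm:noGap} to accommodate costs that are merely Borel in $x$; \eqref{it:FTWOT_main3_L} and \eqref{it:FTWOT_main4_L} then come from \Cref{thm:Attainment} and \Cref{prop:slack} as you say. You instead reduce \eqref{it:FTWOT_main1_L}--\eqref{it:FTWOT_main2_L} to the convex case by passing to $\overline{C}$, sandwiching the dual values between $\DW{\overline{C}}{\mu}{\nu}$ and $\overline{\rm WT}_C(\mu,\nu)$, and invoking \eqref{eq:Cstarstar}. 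That reduction is a legitimate alternative: it trades the direct lsc/compactness argument of \cite{BaBePa18} for one appeal to \eqref{eq:Cstarstar} and one to \Cref{thm:noGap}, at the cost of having to verify that $\overline{C}$ lands in \Cref{asn}.

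Two of your supporting claims are correct in conclusion but incorrectly justified, and you should fix the reasons. First, the parenthetical ``this bound is itself lsc and convex in $\rho$'' is wrong: $\rho \mapsto \rho(b_\ell)$ is generally not lower semicontinuous for $p$-weak convergence when $b_\ell$ is merely a Borel $\cL^1(\nu)$-function, so $a_\ell(x)+\rho(b_\ell)$ need not be dominated by the lsc convex hull on those grounds. The inheritance does hold, but for a different reason: \eqref{eq:Cstarstar} applied with $\mu=\delta_x$ yields the mixing representation $\overline{C}(x,\rho)=\inf\{\int C(x,\sigma)\,Q(d\sigma) : Q\in\cP(\cP_p(Y))\text{ with intensity }\rho\}$, and since $\int\sigma(b_\ell)\,Q(d\sigma)=\rho(b_\ell)$ for every such $Q$, the lower bound passes through the infimum. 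Second, the ``incidental identity $\ppsi^C=\ppsi^{\overline{C}}$'' is not incidental and your argument as written does not establish it — again because $\rho\mapsto\rho(\ppsi)$ is not lsc. It is nevertheless true, by exactly the same mixing representation (the intensity condition gives $\int\sigma(\ppsi)\,Q(d\sigma)=\rho(\ppsi)$, and one checks $\ppsi\in\cL^1(\sigma)$ for $Q$-a.e.\ $\sigma$). You should also flag that joint Borel measurability of $\overline{C}$ is genuinely non-trivial (you correctly defer it to \cite{AcBePa20}, and it cannot be waved away), and that your Borel-selection alternative for primal attainment needs both attainment of the mixing infimum and a measurable selection, neither of which is automatic — the $\hat{\W}_p$-compactness route on $\Lambda(\mu,\nu)$ is the safer of your two options. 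Your observation that convexity of $C(x,\cdot)$ is nowhere used in the proof of \Cref{thm:Attainment}, and that admissibility is preserved under Koml\'os convex combinations because the constraint is affine in $(\pphi,\ppsi)$, is correct and is indeed the key point for \eqref{it:FTWOT_main3_L}.
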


\begin{remark}
We note that already in the setting of the non-relaxed WOT with convex cost, the duality \eqref{eq:WOTdual_L} is used to establish the WOT duality (on general Polish spaces). This is because one equips the set of couplings $\cpl(\mu, \nu)$ with the adapted weak topology (see e.g.\ \cite{BaBaBeEd19b}) which is stronger than the usual weak topology but not compact. Both the set of filtered processes and the set $\Lambda(\mu, \nu)$ are compactifications of $\cpl(\mu, \nu)$. It follows from \cite{BaBePa21} that the two viewpoints are equivalent.\hfill$\diamond$
\end{remark}

\section{Barycentric costs}\label{sec:barycentric_costs}
A particularly relevant class of cost functions consists of only depending on the barycenter of the measure $\rho$. More specifically, we consider the case $X = Y = \R^d$ with cost functions $C: X \times \cP_1(Y) \to \R$ of the form
\begin{align*}
     C(x,\rho) = \vartheta(x-\mean{\rho}),
\end{align*}
where $\vartheta : \R^d \to \R$ is convex and $\mean{\rho} = \int y \, \rho(dy)$. Then the weak optimal transport problem with cost $C$ reads as
\begin{align}\label{eq:barycentric_primal}
    {\rm BT}_\vartheta(\mu,\nu) = \min_{\pi \in \cpl(\mu,\nu) } \int \vartheta(x - \mean{\pi_x}) \, \mu(dx).
\end{align}
Recall that two probability measures $\eta, \rho \in \cP_1(\R^d)$ are said to be in convex order, denoted by $\eta \le_c \rho$ if $\int f \,d\eta \le \int f \, d\rho$ for every convex function $f : \R^d \to \R$. It is easy to observe that \eqref{eq:barycentric_primal} is closely related to the problem of projecting $\mu$ onto $\{\eta : \eta \le_c \nu \}$ w.r.t.\ the value of the classical transport problem with cost $\vartheta(x-y)$, i.e.\
\begin{align}\label{eq:proj_problem}
    {\rm BT}_\vartheta(\mu,\nu) = \min_{\eta \le_{c} \nu}  {\rm T}_\vartheta(\mu,\eta). 
    \end{align}
Specifically, if $\pi$ is an optimizer for ${\rm BT}_\vartheta(\mu,\nu)$, then $\eta := (x \mapsto \mean{\pi_x})_\# \mu$ is optimal in the minimum on the right-hand side of \eqref{eq:proj_problem} and $\xi := (x \mapsto (x, \mean{\pi_x}))_\#\mu \in \cpl(\mu,\eta)$ is an optimal coupling between $\mu$ and $\eta$ for the cost $c(x,y)=\vartheta(x-y)$. Conversely, if $\eta \le_c \nu$ and $\xi \in \cpl(\mu,\eta)$ are both optimal and $\kappa$ is any martingale coupling from $\eta$ to $\nu$, then the conditionally independent gluing of $\xi$ and $\kappa$ is optimal for ${\rm BT}_\vartheta(\mu,\nu)$. Note that this argument also shows that if $\eta$ is optimal in \eqref{eq:proj_problem}, then there is an optimal Monge coupling between $\mu$ and $\eta$ (without any regularity assumption on $\mu$). 

In contrast to the previous sections, we change the sign convention of the dual potential according to the rule $(\phi,\psi):=(f,-g)$. This is because $g$ turns out to be concave in the setting of this section. As we will often use techniques from convex analysis, it significantly simplifies the notation to work with the convex function $\psi = -g$.
We state the main result of this section.

\begin{theorem}\label{thm:StGaMc}
Let $\vartheta: \R^d \to \R$ be convex and assume that there exist $a \in L^1(\mu)$ and $b \in L^1(\nu)$ such that $\vartheta(x-y) \le a(x)+b(y)$ for all $x,y \in \R^d$.
\begin{enumerate}[\upshape(i)]
    \item\label{it:StGaMc1} The dual problem of \eqref{eq:barycentric_primal} is given by 
    \begin{align}\label{eq:baryDual}
    \sup_{\psi \text{ \rm convex, lsc}}   \mu (\vartheta \Box \psi) - \nu(\psi).
    \end{align}
    We have strong duality, primal attainment and dual attainment. 
    \item\label{it:StGaMc2} We have the following complementary slackness criterion: 
    The coupling $\pi \in \cpl(\mu,\nu)$ and the convex function $\psi \in L^1(\nu)$ are both optimal if and only if for $\mu$-a.e.\ $x$
    \[
    \vartheta \Box \psi (x)= \vartheta(x- \mean{\pi_x}) + \pi_x(\psi).
    \]

     \item\label{it:StGaMc3} If $\pi \in \cpl(\mu,\nu)$ is optimal for ${\rm BT}_\vartheta(\mu,\nu)$ and $\psi$ is a dual optimizer, writing $\phi := \vartheta \Box \psi$, we have $\mean{\pi_x} \in \partial^\vartheta \phi(x)$ for $\mu$-a.s.

    \item\label{it:StGaMc4} Suppose that $\vartheta$ is strictly convex. If $\pi, \tilde{\pi} \in \cpl(\mu,\nu)$ are both optimal for ${\rm BT}_\vartheta(\mu,\nu)$, then $\mean{\pi_x} = \mean{\tilde{\pi}_x}$ $\mu$-a.s. Hence, the optimal $\eta \le_c \nu$ and the optimal coupling $\gamma \in \cpl(\mu,\eta)$ mentioned in \eqref{eq:proj_problem} are both unique. 

    \item\label{it:StGaMc5} Suppose that $\vartheta$ is strictly convex, differentiable and supercoercive. If $\psi$ and $\pi$ are optimal, then $\phi := \vartheta \Box \psi \in C^1(\R^d)$ and $\mean{\pi_x}= x- \nabla \vartheta^\ast(\nabla(\phi(x)))$ $\mu$-a.s.
\end{enumerate}
\end{theorem}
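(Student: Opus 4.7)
The plan is to apply the Fundamental Theorem of WOT (\Cref{thm:FTWOT_mainbody}) to the cost $C(x,\rho)=\vartheta(x-\mean{\rho})$ on $X=Y=\R^d$ with $p=1$. I would first check its hypotheses: the map $\rho\mapsto C(x,\rho)$ is convex and $\mathcal W_1$-continuous because $\rho\mapsto\mean{\rho}$ is affine and $\mathcal W_1$-continuous on $\P_1(\R^d)$ and $\vartheta$ is continuous; Jensen's inequality combined with $\vartheta(x-y)\le a(x)+b(y)$ yields $C(x,\rho)\le a(x)+\rho(b)$, which is stronger than \eqref{eq:WOTbound}; and \eqref{eq:WOTcont} follows from $\mean{\rho|_{Y_k}/\rho(Y_k)}\to\mean{\rho}$ for $\rho\in\P_1$ and $Y_k\uparrow\R^d$, together with continuity of $\vartheta$. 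Next, the $C$-transform of a potential $g=-\psi$ computes to
\[
    g^C(x)=\inf_{\rho}\bigl(\vartheta(x-\mean{\rho})+\rho(\psi)\bigr)=\inf_{\bar y\in\R^d}\bigl(\vartheta(x-\bar y)+\psi^{**}(\bar y)\bigr)=(\vartheta\Box\psi^{**})(x),
\]
using the identity $\inf\{\rho(\psi):\mean{\rho}=\bar y\}=\psi^{**}(\bar y)$ (lower bound by Jensen applied to $\psi^{**}$, upper bound by approximating $\psi^{**}(\bar y)$ with discrete probability measures of mean $\bar y$ via Carathéodory). Since replacing $g$ by $-\psi^{**}$ preserves admissibility ($\vartheta\Box\psi^{**}$ is already a $C$-transform) and does not decrease the dual value ($\psi^{**}\le\psi$, so $\nu(-\psi^{**})\ge\nu(g)$), the dual sup reduces to convex lsc $\psi$, giving (i) jointly with the primal/dual attainment and strong duality provided by the fundamental theorem.

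For (ii) the slackness equality is the translation of \Cref{thm:FTWOT_mainbody}\,\eqref{it:FTWOT_main4} under $\psi=-g$ and $f=\vartheta\Box\psi$. For (iii), starting from $\phi(x)=\vartheta(x-\mean{\pi_x})+\pi_x(\psi)$ and combining Jensen's inequality $\pi_x(\psi)\ge\psi(\mean{\pi_x})$ (valid since $\psi$ is convex) with the defining inequality $\phi(x)\le\vartheta(x-\mean{\pi_x})+\psi(\mean{\pi_x})$, the chain
\[
    \phi(x)=\vartheta(x-\mean{\pi_x})+\pi_x(\psi)\ge\vartheta(x-\mean{\pi_x})+\psi(\mean{\pi_x})\ge\phi(x)
\]
forces equality throughout; in particular the infimum defining $\vartheta\Box\psi(x)$ is attained at $\mean{\pi_x}$, i.e.\ $\mean{\pi_x}\in\partial^\vartheta\phi(x)$.

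For (iv), the functional $\pi\mapsto\int\vartheta(x-\mean{\pi_x})\,\mu(dx)$ is convex on $\cpl(\mu,\nu)$ because $\pi_x\mapsto\mean{\pi_x}$ is affine and $\vartheta$ is convex. If $\pi,\tilde\pi$ are both optimal, so is their average $\bar\pi=(\pi+\tilde\pi)/2$, and then strict convexity of $\vartheta$ together with $\mu$-a.s.\ equality in the integrated convexity inequality forces $\mean{\pi_x}=\mean{\tilde\pi_x}$ for $\mu$-a.e.\ $x$. Uniqueness of the projected measure $\eta=(x\mapsto\mean{\pi_x})_\#\mu$ and of the Monge coupling $\gamma$ in \eqref{eq:proj_problem} then follow immediately.

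For (v), under strict convexity, differentiability and supercoercivity of $\vartheta$ the gradient $\nabla\vartheta$ is a homeomorphism of $\R^d$ with inverse $\nabla\vartheta^*$. For each $x$ the function $y\mapsto\vartheta(x-y)+\psi(y)$ is strictly convex and coercive (supercoercivity of $\vartheta$ dominates the affine lower bound on the convex function $\psi$), so its minimiser $y^*(x)$ is unique, and by (iii) $y^*(x)=\mean{\pi_x}$. Hence the subdifferential $\partial\phi(x)=\{\nabla\vartheta(x-y^*(x))\}$ is a singleton for every $x$, which makes $\phi$ differentiable everywhere and thus $C^1$ (a convex function differentiable everywhere is automatically $C^1$); inverting $\nabla\vartheta$ gives $\mean{\pi_x}=x-\nabla\vartheta^*(\nabla\phi(x))$. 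The main technical obstacle I anticipate lies in part (i): one has to verify that after the replacement $\psi^{**}\in\cL^1(\nu)$ and that the $C$-transform identity is realised by probability measures on which $\psi$ is integrable, both of which hinge on the affine lower bound available for convex lsc functions on $\R^d$ and on the moment condition $\nu\in\P_1(\R^d)$.
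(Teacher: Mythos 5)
Your overall strategy matches the paper's: apply \Cref{thm:FTWOT_mainbody} to $C(x,\rho)=\vartheta(x-\mean\rho)$, compute $(-\psi)^C=\vartheta\Box\psi^{\ast\ast}$, and unwind complementary slackness. Parts (i), (ii), (iii) and (v) follow the paper essentially verbatim. Two things are worth noting.

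\emph{A genuinely different argument for the first half of (iv).} Instead of using the dual optimizer as the paper does (both barycenters minimize $y\mapsto\vartheta(x-y)+\psi(y)$, and strict convexity forces uniqueness of the minimizer), you exploit convexity of $\pi\mapsto\int\vartheta(x-\mean{\pi_x})\,\mu(dx)$: the disintegration of $(\pi+\tilde\pi)/2$ w.r.t.\ its $\mu$-marginal is $(\pi_x+\tilde\pi_x)/2$, and strict convexity of $\vartheta$ in the integrand forces $\mean{\pi_x}=\mean{\tilde\pi_x}$ $\mu$-a.s. This is cleaner and avoids invoking the dual optimizer at all for this half; both are valid.

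\emph{A gap in the second half of (iv).} Your phrase ``uniqueness of $\eta$ and of the Monge coupling $\gamma$ then follow immediately'' is not justified. Knowing $T(x):=\mean{\pi_x}$ is unique does not by itself pin down an arbitrary optimal pair $(\eta,\xi)$ with $\eta\le_c\nu$ and $\xi\in\cpl(\mu,\eta)$ optimal: a priori $\xi$ could have nondegenerate kernels $\xi_x$, and $\eta$ need not equal $T_\#\mu$. One still has to show that every optimal $\xi$ is of Monge type. The paper isolates exactly this step in \Cref{lem:optMonge}: given optimal $(\eta,\xi)$, Jensen's inequality shows $\tilde\eta:=(\mean{\xi_\cdot})_\#\mu\le_c\eta\le_c\nu$, hence $\tilde\eta$ is admissible, and the chain of inequalities $\OT{c}{\mu}{\tilde\eta}\le\int\vartheta(x-\mean{\xi_x})\,\mu(dx)\le\int\int\vartheta(x-y)\,\xi_x(dy)\,\mu(dx)=\OT{c}{\mu}{\eta}$ must be tight; strict convexity then forces $\xi_x=\delta_{T(x)}$ $\mu$-a.s. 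Without this or an equivalent, the uniqueness of $(\eta,\gamma)$ is not established.

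A smaller imprecision in (i): the identity $\inf\{\rho(\psi):\mean\rho=\bar y\}=\psi^{\ast\ast}(\bar y)$ you invoke is not exact; the infimum equals the convex envelope $\widehat\psi(\bar y)$, which can differ from $\psi^{\ast\ast}(\bar y)$ at boundary points of $\dom\psi$ where the envelope fails to be lsc. The paper resolves this with the observation that $\vartheta\Box\widehat\psi=\vartheta\Box\psi^{\ast\ast}$ because $\vartheta$ is finite and continuous (equation \eqref{eq:BoxLscHull}). Your final formula for $g^C$ is therefore correct, but the intermediate identity as stated is not.
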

Note that the growth condition on $\vartheta$ is automatically satisfied if $\mu, \nu$ have bounded support, or more generally, if $\mu, \nu$ have finite $p$-th moments and $\vartheta$ does not grow faster than $|x|^p$.
By abuse of notation, we write $\partial^\vartheta \phi(x)$ to denote the $c_\vartheta$-subdifferential of $\phi$ at $x$, where $c_\vartheta(x,y) = \vartheta(x-y)$, that is, the set
$\partial^\vartheta \phi(x) = \{ y \in \R^d : \phi(x) + \vartheta(x-y) \le \phi(z) + \vartheta(z-y), \, \forall z \in \R^d \}$.

\subsection{Applications of \Cref{thm:StGaMc}} 
Note that Theorem~\ref{thm:StGaMc} is an extension of the Brenier--Strassen theorem in a similar way as the Gangbo--McCann theorem \cite[Theorem~1.2]{GaMc96} is an extension of Brenier's theorem. We note that \eqref{eq:proj_problem} was already established by Gozlan and Julliet \cite{GoJu18} for general convex $\vartheta$; moreover they establish the existence of dual maximizers in the case of compactly supported measures.

\begin{remark}\label{rem:StGaMcStr}
We note that applying \Cref{thm:StGaMc}\eqref{it:StGaMc1} to a convex function $\vartheta: \R^d \to \R$ which attains its unique minimum in zero, implies Strassen's theorem \cite{St65}. 
To see this assume wlog that $\vartheta \ge 0$ and $\vartheta(0)=0$.
As $\vartheta\Box \psi \le \psi$, we find 
\begin{equation}
    \label{eq:rem.StGaMcStr}
    0 \le {\rm BT}_\vartheta(\mu,\nu) = \sup_{\psi \text{ \rm convex, lsc}}   \mu( \vartheta \Box \psi)  - \nu( \psi)  \le \sup_{\psi \text{ \rm convex, lsc}} \mu(\psi)  - \nu(\psi).
\end{equation}
Suppose that the measures $\mu,\nu \in \cP_1(\R^d)$ are in convex order, then we have by \eqref{eq:rem.StGaMcStr} that ${\rm BT}_\vartheta(\mu,\nu) = 0$.
Hence, there is $\pi \in \cpl(\mu,\nu)$ such that $\int \vartheta(x-\mean{\pi_x}) \, \mu(dx)=0$. As $\vartheta$ has its unique minimum in 0, we have $\mean{\pi_x}=x$ $\mu$-a.s., i.e.\ $\pi$ is a martingale coupling. \hfill$\diamond$
\end{remark}

\begin{example}[$\vartheta(x)=|x|^p$; $p$-Wasserstein projection in convex order] 
A particularly relevant special case of Theorem~\ref{thm:StGaMc} is $\vartheta(x)=|x|^p$. 
Then, for $\mu,\nu \in \cP_p(\R^d)$, the problem \eqref{eq:proj_problem} reads as 
\begin{align}\label{eq:WpProj}
\inf \{ \W_p^p(\mu,\eta) : \eta \le_c \nu \},
\end{align}
which is precisely finding the metric projection of $\mu$ onto the set $\{ \eta \in \cP_p(\R^d) : \eta \le_c \nu \}$ in the $p$-Wasserstein space $(\cP_p(\R^d),\W_p)$. Theorem~\ref{thm:StGaMc} states existence, uniqueness (if $p>1$) of solutions to this projection problem and gives a description of this solution. \hfill $\diamond$ 
\end{example}

\begin{example}[$\vartheta(x)= \frac{1}{2}|x|^2$; Brenier--Strassen theorem]
In this case, we recover precisely the Brenier--Strassen theorem of Gozlan and Julliet \cite{GoJu18}.

By \Cref{thm:StGaMc}, there exist a primal optimizer $\pi \in \cpl(\mu,\nu)$ and a dual optimizer $\psi$. Then the function $\phi := \frac{1}{2} | \cdot |^2 \Box \psi$ in the dual problem \eqref{eq:baryDual} is the Moreau envelope of $\psi$ (with parameter 1). 
Moreover, there is a unique $\eta \in \cP_2(\R^d)$ which solves the projection problem \eqref{eq:WpProj} for $p=2$ and the optimal coupling in $\W_2^2(\mu,\eta)$ is unique and induced by the Monge map 
$
T(x)= \mean{\pi_x}.
$
By the complementary slackness condition (\Cref{thm:StGaMc}\eqref{it:StGaMc2}), we find
\begin{align}\label{eq:ProxPsi}
    \frac{1}{2}|\cdot|^2 \Box \psi(x) \le  \frac{1}{2}|x - T(x)|^2 + \psi(T(x)) \le \frac{1}{2}|x -T(x)|^2 + \pi_x(\psi) = \frac{1}{2}|\cdot|^2  \Box \psi(x).
\end{align}
Hence, all inequalities in \eqref{eq:ProxPsi} are in fact equalities. Note that \eqref{eq:ProxPsi} is then the defining equation of the proximal map of $\psi$ (see \Cref{sec:app1}), i.e.\ $T = \rm{Prox}_\psi$. It is well-known that the proximal map is differentiable with 1-Lipschitz gradient.\footnote{Note that $T=\rm{Prox}_\psi$ coincides with the transport map stated in \Cref{thm:StGaMc}\eqref{it:StGaMc5}. Indeed, \cite[Proposition~12.29]{BaCo11} asserts that ${\rm Prox}_\psi = {\rm Id} - \nabla( \frac{1}{2} | \cdot |^2 \Box \psi)$. As $\vartheta=\frac{1}{2}|\cdot|^2$, we have $\nabla \vartheta^\ast = {\rm Id}$ and hence ${\rm Prox}_\psi(x) = x- \nabla \vartheta^\ast(\nabla(\phi(x)))$. } \hfill $\diamond$   
\end{example}

Recall that a support function is a convex function $\vartheta: \R^d \to \R \cup \{+\infty \}$ that satisfies $\vartheta(tx)=t\vartheta(x)$ for every $x \in \R^d$ and $t \ge 0$. Note that support functions are subadditive, i.e., $\vartheta(x+y) \le \vartheta(x) + \vartheta(y)$.
In the case of support functions, the duality simplifies in the following way.

\begin{corollary}[Convex Kantorovich--Rubinstein for support functions]\label{cor:hSpt}
Let $\vartheta : \R^d \to \R$ be a support function. Then, 
\begin{align}
{\rm BT}_\vartheta(\mu,\nu)=& \max\left\{  \mu(\phi)  -   \nu(\phi)  : \phi \text{ \rm convex, } \phi(x_1) - \phi(x_2) \le \vartheta(x_1-x_2) \right\}. 
\end{align} 
\end{corollary}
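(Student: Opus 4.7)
The plan is to use \Cref{thm:StGaMc}\eqref{it:StGaMc1}, which gives strong duality and attainment in the form
$
    {\rm BT}_\vartheta(\mu,\nu) = \max_{\psi \text{ convex, lsc}} \mu(\vartheta \Box \psi) - \nu(\psi),
$
and then reformulate this dual by exploiting the two defining features of a support function: $\vartheta(0)=0$ and subadditivity $\vartheta(x+y)\le \vartheta(x)+\vartheta(y)$. The strategy is to identify two subclasses of dual competitors, one coming from \Cref{thm:StGaMc} and one appearing in the corollary, and show that they yield the same supremum via the key identity $\vartheta\Box\phi=\phi$ valid exactly for functions $\phi$ satisfying the Lipschitz-type bound.

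First I would verify that if $\phi$ is convex and satisfies $\phi(x_1)-\phi(x_2)\le \vartheta(x_1-x_2)$ for all $x_1,x_2\in\R^d$, then $\vartheta\Box\phi=\phi$. Choosing $y=x$ in the infimal convolution gives $\vartheta\Box\phi(x)\le \vartheta(0)+\phi(x)=\phi(x)$; the Lipschitz condition rearranges to $\vartheta(x-y)+\phi(y)\ge \phi(x)$ for every $y$, and taking the infimum yields the reverse inequality. In particular any such $\phi$ is admissible in \eqref{eq:baryDual} and produces the value $\mu(\phi)-\nu(\phi)$, proving the $\le$ direction in the corollary.

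Next I would go the other way: given any convex lsc $\psi$, set $\phi := \vartheta\Box\psi$. Then $\phi$ is convex as the infimal convolution of two convex functions, and finite and continuous under the growth hypothesis on $\vartheta$. Subadditivity gives $\vartheta(x_1-y)\le \vartheta(x_1-x_2)+\vartheta(x_2-y)$, so adding $\psi(y)$ and taking the infimum over $y$ yields the Lipschitz-type bound $\phi(x_1)\le \vartheta(x_1-x_2)+\phi(x_2)$, i.e.\ $\phi$ is admissible in the corollary's dual. Since $\vartheta(0)=0$, we also have $\phi\le\psi$ and therefore
\[
    \mu(\vartheta\Box\psi)-\nu(\psi) \;=\; \mu(\phi)-\nu(\psi) \;\le\; \mu(\phi)-\nu(\phi).
\]
Combining the two steps shows that the two suprema coincide, and \Cref{thm:StGaMc}\eqref{it:StGaMc1} ensures dual attainment: for any optimal $\psi$, the function $\phi:=\vartheta\Box\psi$ attains the maximum in the corollary.

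There is no real obstacle: the argument is a clean reformulation of the known dual of \Cref{thm:StGaMc} using subadditivity. The only point requiring mild care is integrability of $\phi$ against $\mu$ and $\nu$, which is ensured by the Lipschitz bound together with the growth hypothesis $\vartheta(x-y)\le a(x)+b(y)$ inherited from \Cref{thm:StGaMc}.
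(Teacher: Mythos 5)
Your proposal is correct and follows essentially the same route as the paper: starting from the dual \eqref{eq:baryDual}, using $\vartheta(0)=0$ to get $\vartheta\Box\psi\le\psi$ and subadditivity to show $\vartheta\Box(\vartheta\Box\psi)=\vartheta\Box\psi$, so that the dual can be restricted to functions of the form $\phi=\vartheta\Box\psi$, which are exactly the convex functions satisfying the Lipschitz-type bound. The only cosmetic difference is that the paper delegates the equivalence between the Lipschitz condition, the fixed-point property $\vartheta\Box\phi=\phi$, and the form $\vartheta\Box\psi$ to \Cref{lem:RangeOfBox}, whereas you verify both implications directly; the substance is identical.
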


Note that the condition $\phi(x_1) - \phi(x_2) \le \vartheta(x_1-x_2) \text{ for all } x_1,x_2 \in \R^d$ characterizes those convex functions $\phi$ such that there is a convex function $\psi$ with $\phi = \vartheta \Box \psi $, see \Cref{lem:RangeOfBox}. Hence, we could write the assertion also as
\begin{align}\label{eq:hSptBox}
{\rm BT}_\vartheta(\mu,\nu) 
=& \max\left\{  \mu( \vartheta\Box \psi)  - \nu( \vartheta\Box \psi)  : \psi \text{ \rm convex}  \right\}
\end{align}

\begin{proof}
As support functions have linear growth, the growth condition in \Cref{thm:StGaMc} is automatically satisfied, and we have duality with the problem \eqref{eq:baryDual} and attainment. 
By subadditivity of $\vartheta$, we have $\vartheta \Box \vartheta \Box \psi = \vartheta \Box \psi$, see \Cref{lem:RangeOfBox}.
As $\vartheta(0)=0$, we have $\vartheta\Box \psi \le \psi$. These two observations show that replacing a convex function $\psi$ with $\vartheta\Box \psi$ increases the value of the dual functional. Hence, we can restrict the maximum to functions $\phi$ of the form $\phi = \vartheta\Box \psi$. This implies the claim by again using that $\vartheta\Box \vartheta \Box \psi = \vartheta \Box \psi$.
\end{proof}

\begin{example}[$\vartheta(x)=\|x\|$; convex Kantorovich--Rubinstein] 
Let $\|\cdot\|$ be a norm on $\R^d$.
For $\mu, \nu \in \cP_1(\R^d)$, we have 
\begin{align}\label{eq:convKR}
    \min_{ \eta \le_c \nu } \W_1(\mu,\eta) = \min_{\pi \in \cpl(\mu,\nu)} \int \|x- \mean{\pi_x} \| \,\mu(dx) 
 = \max_{\substack{\phi\,  \rm{ convex,} \\ \text{$\|\cdot\|$-1-Lipschitz}}} \mu(\phi) - \nu(\phi).
\end{align}
This follows from \eqref{eq:proj_problem} and \Cref{cor:hSpt} because every norm is a support function and the condition $\phi(x_1) -\phi(x_2) \le \|x_1-x_2\|$ precisely states that $\phi$ is 1-Lipschitz w.r.t. $\| \cdot \|$. \hfill $\diamond$ 
\end{example}

\begin{example}[$\vartheta(x)=x_+$; increasing convex Kantorovich--Rubinstein]
Let $\mu, \nu \in \cP_1(\R)$. Then, we have 
\[
\min_{\pi \in \cpl(\mu,\nu)} \int (x- \mean{\pi_x})_+ \,\mu(dx) 
 = \max_{\substack{\phi\,  \rm{increasing \, convex, } \\ \rm{1\text{-Lipschitz}}}} \mu(\phi) - \nu(\phi).
\]
This follows from \Cref{cor:hSpt} noting that the condition $\phi(x_1) -\phi(x_2) \le (x_1-x_2)_+$ precisely states that $\phi$ is 1-Lipschitz and increasing. \hfill $\diamond$
\end{example}

\begin{example}[Multidimensional increasing convex Kantorovich--Rubinstein]
Let $\le$ be the coordinatewise order on $\R^d$, i.e. $x \le y$ if $x_i \le y_i$ for every $i \in \{1, \dots, d\}$. Then, we have
\[
\min_{\pi \in \cpl(\mu,\nu)} \int \max_{i=1,\dots,d}(x_i-\mean{\pi_x}_i)_+  \, \mu(dx) = \max_{\substack{\phi\,\textrm{convex, $\| \cdot \|_1$-1-Lipschitz} \\ \textrm{increasing w.r.t.\ $\le$}}} \mu(\phi) - \nu(\phi).
\]

More generally, let $\le$ be a partial order on $\R^d$ that is compatible with the linear structure and continuous (i.e.\ if $x_n \to x$ and $x_n \ge0$, then $x \ge 0$) and let $\| \cdot \|$ be a norm on $\R^d$. Then, 
\[
\min_{\pi \in \cpl(\mu,\nu)} \int {\rm{dist}}_{\| \cdot \|}(x-\mean{\pi_x}, -P)  \, \mu(dx) = \max_{\substack{\phi\,  \text{convex, $\|\cdot\|$-1-Lipschitz,} \\ \text{increasing\, w.r.t.\ $\le$}}}\mu(\phi) - \nu(\phi) ,
\] 
where $P := \{ y \in \R^d : y \ge 0\}$ is the positive cone for the order $\le$ and ${\rm dist}_{\|\cdot\|}(\,\cdot\, , -P)$ denotes the distance to $-P$ w.r.t. the norm $\| \cdot \|$. 

To see this, we apply \eqref{eq:hSptBox} with the function $\vartheta(x) = {\rm{dist}}_{\| \cdot \|}(x, -P)$. Then it remains to observe that a convex function $\phi$ is $\|\cdot\|$-1-Lipschitz and $\le$-increasing if and only if it is of the form $\phi=\vartheta \Box \psi$ for some convex function $\psi$. For this, note that 
$$
\vartheta(x) = {\rm dist}_{\|\cdot\|}(x, -P) = \inf_{y \in -P} \|x-y\| = \|\cdot\| \Box \chi_{(-P)} (x). 
$$
Using this, \Cref{cor:RangeBoxIntersec} implies that $\phi = \vartheta \Box \psi$ for some convex function $\psi$ if and only if there is a convex functions $\psi_1$ such that $\phi = \| \cdot \| \Box \psi_1$ and there is a convex functions $\psi_2$ such that $\phi = \chi_{(-P)} \Box \psi_2$, where $\chi$ denotes the convex indicator, see \Cref{sec:app2}.
The first condition is equivalent to $\phi$ being 1-Lipschitz w.r.t. $\|\cdot\|$ and the second condition is equivalent to $\phi$ being increasing w.r.t. $\le$. 
\hfill $\diamond$
\end{example}

\subsection{Mathematical finance interpretation of convex Kantorovich-Rubinstein}\label{ssec:Finance_interpretation}
 \Cref{cor:hSpt} admits a natural financial interpretation in terms of model-independent arbitrage under trading restrictions. 
To provide a concise formulation we recall that there is a one-to-one correspondence between support functions $\vartheta :\R^d \to \R$ and compact convex subsets of $\R^d$. The support function of a compact convex set $K \subset \R^d$ is given by
\begin{align}\label{eq:hK}
    \vartheta_K(x)=\sup_{y \in K} x \cdot y.
\end{align}
In this case $\partial \vartheta_K(0) =K$. As $\vartheta = \vartheta_{\partial \vartheta(0)}$ and $\partial \vartheta(0)$ is compact convex, every support function arises as in \eqref{eq:hK}.

We then have the following result which is essentially a reformulation of \Cref{cor:hSpt}.
\begin{corollary}\label{cor:hSptFinancy}
Let $K \subseteq \R^d$ be compact and convex. Then we have
\begin{align} \label{eq:SuperRep} \begin{split}
&\min_{\pi  \in \cpl(\mu,\nu)}  \int \vartheta_K(\mean{\pi_x}-x) \,\mu(dx) = \\
&\quad  \max\{w:\exists f_1, f_2, \Delta; \ w\leq (f_1(x_1)-\mu(f_1)) + (f_2(x_2)-\nu(f_2)) + \Delta(x_1)\cdot(x_2-x_1) \},
\end{split}
\end{align}
where $\Delta$ is measurable with values in $K$ and $f,g$ are Lipschitz. 
\end{corollary}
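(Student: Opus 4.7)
The plan is to derive \Cref{cor:hSptFinancy} by reducing it to \Cref{cor:hSpt} for the support function $\vartheta_{-K}(z) := \vartheta_K(-z) = \sup_{y \in -K} z\cdot y$ of the compact convex set $-K$. Since $\vartheta_K(\mean{\pi_x}-x) = \vartheta_{-K}(x-\mean{\pi_x})$, the primal in \eqref{eq:SuperRep} equals ${\rm BT}_{\vartheta_{-K}}(\mu,\nu)$. By \Cref{cor:hSpt}, this is attained and equals
\[
\max\bigl\{\mu(\phi) - \nu(\phi) \,:\, \phi \text{ convex, } \phi(x_1) - \phi(x_2) \le \vartheta_{-K}(x_1-x_2) = \vartheta_K(x_2-x_1) \text{ for all } x_1,x_2\bigr\}.
\]

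Next I would show the constraint on $\phi$ is equivalent to the subdifferential inclusion $\partial\phi(x) \subseteq -K$ for every $x$. For $\Leftarrow$, any $u \in \partial\phi(x_1) \subseteq -K$ satisfies $\phi(x_1) - \phi(x_2) \le (-u)\cdot(x_2-x_1) \le \vartheta_K(x_2-x_1)$ by the subgradient inequality and $-u \in K$. For $\Rightarrow$, swap $x_1 \leftrightarrow x_2$ in the constraint, set $x_2 = x_1 + tv$ with $t>0$, divide by $t$, and let $t \to 0^+$: this gives $D_v^+\phi(x_1) \le \vartheta_K(-v)$ for every direction $v$. Using $D_v^+\phi(x_1) = \max_{u \in \partial\phi(x_1)} u\cdot v$ and comparing support functions of closed convex sets forces $\partial\phi(x_1) \subseteq -K$. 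In particular, feasible $\phi$ are globally Lipschitz with constant $\sup_{y \in K}|y|$, hence in $L^1(\mu) \cap L^1(\nu)$.

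With this reformulation I would construct financial feasible triples as follows. Pick a Borel selector $\Delta(x) \in -\partial\phi(x) \subseteq K$ (available by standard measurable selection, since $\partial\phi$ has closed convex values and Borel graph), and set $f_1 := -\phi$, $f_2 := \phi$, both Lipschitz. The subgradient inequality $\phi(x_2) \ge \phi(x_1) - \Delta(x_1)\cdot(x_2-x_1)$ rearranges to
\[
-\phi(x_1) + \phi(x_2) + \Delta(x_1)\cdot(x_2-x_1) \ge 0 \qquad \text{for all } x_1,x_2 \in \R^d,
\]
so $(f_1,f_2,\Delta)$ witnesses feasibility of $w = \mu(\phi) - \nu(\phi)$ in \eqref{eq:SuperRep}. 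Thus the right-hand side of \eqref{eq:SuperRep} dominates the dual value of \Cref{cor:hSpt}, hence the primal.

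The reverse inequality is plain weak duality: given any admissible $(f_1,f_2,\Delta,w)$ and any $\pi \in \cpl(\mu,\nu)$, integrating the pointwise inequality against $\pi$ cancels the $\mu(f_1)$ and $\nu(f_2)$ terms and yields
\[
w \le \int \Delta(x_1)\cdot(\mean{\pi_{x_1}}-x_1)\,\mu(dx_1) \le \int \vartheta_K(\mean{\pi_{x_1}}-x_1)\,\mu(dx_1),
\]
the last bound using $\Delta(x_1) \in K$. Taking the infimum over $\pi$ closes the chain of inequalities and, combined with the attainment granted by \Cref{cor:hSpt}, yields the max in \eqref{eq:SuperRep}. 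The main (genuine) step is the translation between the dual constraint of \Cref{cor:hSpt} and the subdifferential inclusion $\partial\phi \subseteq -K$; everything else is routine convex analysis and integration against couplings.
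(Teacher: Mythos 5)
Your proof is correct and essentially reproduces the paper's argument: reduce to $\mathrm{BT}_{\vartheta_{-K}}(\mu,\nu)$, invoke \Cref{cor:hSpt}, identify the constraint on $\phi$ with the subdifferential inclusion $\partial\phi\subseteq -K$, and build a financial feasible triple $(f_1,f_2,\Delta)=(-\phi,\phi,\text{sel}(-\partial\phi))$ from the dual optimizer. The two local differences are: (a) you re-derive the inclusion $\partial\phi\subseteq -K$ directly from the Lipschitz-type constraint via directional derivatives and support functions, where the paper cites \Cref{lem:RangePartialBox}; these arguments are of equal substance. (b) For the upper bound on the financial value, you integrate the pointwise constraint directly against an arbitrary coupling $\pi$ to get weak duality against the primal, whereas the paper instead manufactures from $(f_1,f_2,\Delta)$ an admissible pair $(\phi,\psi)$ for the weak transport dual and then closes the chain via strong duality. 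Your route for (b) is slightly more elementary — it avoids passing through the abstract WT dual and uses only the definition of barycenter and $\Delta(x)\in K\Rightarrow\Delta(x)\cdot z\le\vartheta_K(z)$ — but the paper's version makes explicit the correspondence between financial hedging strategies and weak transport dual potentials, which is arguably the point of \Cref{cor:hSptFinancy}. One small omission: you prove both directions of the equivalence between the Lipschitz-type constraint and $\partial\phi\subseteq -K$, but only the forward direction is used; and you should note (as the paper does via attainment in \Cref{cor:hSpt}) that the $\max$ over financial triples is genuinely attained, since your construction exhibits an explicit maximizer from the dual optimizer of \Cref{cor:hSpt}.
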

Corollary \ref{cor:hSptFinancy} can be seen as a robust quantitative FTAP (`fundmental theorem of asset pricing') for a two time-step market $X_1,X_2$ consisting of $d$ assets. In mathematical finance terms, $f_1,f_2$ are vanilla options with maturities $t=1$ and $t=2$, resp.\ which can be bought at prices $\mu(f_1)$ and $ \nu(f_2)$, resp.; this is based on the famous Breeden--Litzenberger observation \cite{BrLi78} that prices of liquidly traded call / put options can be expressed as distributions of the underlying $X$ under `pricing measures'. Then 
\[S_{f_1,f_2,\Delta}(x_1,x_2):=(f_1(x_1)-\mu(f_1)) + (f_2(x_2)-\nu(f_2)) + \Delta(x_1)\cdot(x_2-x_1) \]
represents the gains / losses from self-financing trading, with static positions $f_1(X_1), f_2(X_2)$ and  holding $\Delta(X_1)$ assets from time $t=1$ to time $t=2$.
The right hand side of \eqref{eq:SuperRep} is the maximal risk less profit (``arbitrage'') an investor can achieve by trading under the restriction $\Delta(X_1)\in K$.

To illustrate Corollary \ref{cor:hSptFinancy}, we consider first the case $d=1$ and $K=[-1,1]$ where $\vartheta_K(x)=|x|$. If the market satisfies the strict No Arbitrage condition, the maximal risk less profit equals $0$ and \eqref{eq:SuperRep} recovers the existence of a martingale measure $\pi$ consistent with $\mu, \nu$. More generally, if the maximal risk less profit under the trading restriction $|\Delta(X_1)| \leq 1$ has level $\ell$, \eqref{eq:SuperRep} yields the existence of a consistent `$\ell$-almost' martingale pricing measure.

To consider another example, let again $d=1$ and $K=[0,1]$ such that $\vartheta_K(x)=(x)_+$. Financially, $\Delta(x)\geq 0$ means that no short-selling is allowed. Corollary \ref{cor:hSptFinancy} then connects the maximal level of arbitrage per stock to the existence of an `almost' super-martingale measure.

\begin{proof}[Proof of \Cref{cor:hSptFinancy}]
First note that the right hand side of \eqref{eq:SuperRep} is equal to $\max \mu(f_1) + \nu(f_2)$, where the maximum is taken of all $(f_1,f_2,\Delta)$ such that $\Delta(x_1)\in K$ and $f_1(x_1)+f_2(x_2)+\Delta(x_1)\cdot (x_2-x_1)\ge 0$. By \Cref{thm:StGaMc}\eqref{it:StGaMc1} and \eqref{eq:hSptBox} applied with $\vartheta:= \vartheta_{-K}$, it suffices to show that 
\[
\max_{\zeta \text{ convex}} \mu(\vartheta \Box \zeta) - \nu(\vartheta \Box \zeta) \leq 
\!\! \max_{\substack{(f_1, f_2, \Delta) \text{ s.t. } \Delta(x_1) \in K, \\ f_1(x_1) + f_2(x_2) + \Delta(x_1) \cdot (x_2 - x_1) \geq 0 \\ }} \!\!\!\!\!\!\! \mu(f_1) + \nu(f_2) \leq 
\max_{ \substack{ \phi(x_1) - \rho(\psi) \le \\ \vartheta(x-\mean{\rho}) } } \mu(\phi) - \nu(\psi).
\]

To see the first inequality, let $\zeta$ be optimal and define $f_1 := -(\vartheta\Box \zeta)$, $f_2:=  \vartheta \Box \zeta$ and let $\Delta$ be a selector of $-\partial( \vartheta \Box \zeta)$. Then the sub-differential inequality $\vartheta \Box \zeta (x_2) \ge \vartheta \Box \zeta(x_1) + H(x_1)\cdot(x_2-x_1)$ yields that $f_1(x_1) + f_2(x_2) + \Delta(x_1) \cdot (x_2 - x_1) \geq 0$. As $\partial (\vartheta\Box \zeta)(x_1)\subseteq -K$ (see \Cref{lem:RangePartialBox}), we have $\Delta(x_1) \in K$. 

To see the second inequality, let $(f_1,f_2,\Delta)$ be optimal and set $\phi := -f_1$, $\psi := f_2$. Then, integrating the admissibility condition for $(f_1,f_2,\Delta)$ for fixed $x_1$ w.r.t.\ $\rho(dx_2)$ yields
\[
\phi(x_1) \le -\Delta(x_1) \cdot (x_1 -\mean{\rho}) + \rho(\psi) \le \vartheta(x_1-\mean{\rho}) + \rho(\psi),
\]
where the second inequality is the subdifferential inequality for $\vartheta$ in the point 0, noting that $-\Delta(x_1) \in -K = \partial \vartheta(0)$.  
\end{proof}

\subsection{Proof of \Cref{thm:StGaMc}} We aim to prove \Cref{thm:StGaMc} by invoking the fundamental theorem of weak optimal transport. As the first step, we calculate the $C$-transform.

\begin{lemma}\label{lem:C_trafo_with_convex_h}
Let $\vartheta: \R^d \to \R$ be convex, $C(x,\rho)=\vartheta(x-\mean{\rho})$ and let $\psi : \R^d \to \R  \cup \{+\infty\}$ be proper. Then, we have
\[
(-\psi)^C = \vartheta \Box \psi^{\ast\ast}. 
\]
If $\vartheta$ is supercoercive, we have $\psi^C(x) > - \infty$ for every $x \in \R^d$.
\end{lemma}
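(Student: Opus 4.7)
The plan is to unfold the definition of the $C$-transform and reduce the computation to the classical Choquet/barycentric representation of the biconjugate $\psi^{\ast\ast}$. Unwinding the definition and exploiting that $\vartheta(x - \mean{\rho})$ depends on $\rho$ only through its barycenter, I would stratify the infimum by $y := \mean{\rho}$:
\begin{align*}
    (-\psi)^C(x)
    = \inf_{y \in \R^d}\Bigl\{ \vartheta(x-y) + \inf\bigl\{ \rho(\psi) : \rho \in \cP_p(\R^d),\ \mean{\rho} = y,\ \psi \in \cL^1(\rho)\bigr\}\Bigr\}.
\end{align*}
The desired identity $(-\psi)^C = \vartheta \Box \psi^{\ast\ast}$ will then follow, by the very definition of the infimal convolution, as soon as the inner infimum is recognised as $\psi^{\ast\ast}(y)$.

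\textbf{Key identity and main obstacle.} The crucial step, which I expect to be the main technical point, is the barycentric representation
\begin{align*}
    \psi^{\ast\ast}(y) = \inf\bigl\{\rho(\psi) : \rho \in \cP_p(\R^d),\ \mean{\rho}=y,\ \psi \in \cL^1(\rho)\bigr\}.
\end{align*}
The inequality ``$\le$'' is immediate from Jensen: since $\psi^{\ast\ast}$ is convex with $\psi^{\ast\ast}\le\psi$, one has $\psi^{\ast\ast}(y) = \psi^{\ast\ast}(\mean{\rho}) \le \rho(\psi^{\ast\ast}) \le \rho(\psi)$ for every competitor $\rho$. The more delicate direction ``$\ge$'' I would handle via the Fenchel--Moreau bipolar theorem, $\psi^{\ast\ast}(y) = \sup\{\ell(y) : \ell \text{ affine},\ \ell \le \psi\}$, valid whenever $\psi^{\ast\ast}$ is proper: any such affine minorant $\ell$ satisfies $\ell(y) = \int \ell\,d\rho \le \int \psi\,d\rho$ for every admissible $\rho$ with $\mean{\rho}=y$, and taking the supremum over $\ell$ gives the inequality. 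The degenerate case $\psi^{\ast\ast}\equiv -\infty$ can be disposed of separately by producing symmetric two-point measures $\tfrac12\delta_{y+tv}+\tfrac12\delta_{y-tv}$ with barycenter $y$ whose $\psi$-integrals diverge to $-\infty$ along a direction $v$ in which $\psi$ is unbounded below; in that situation both sides agree at $-\infty$.

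\textbf{Supercoercivity and finiteness.} For the second claim (interpreting ``$\psi^C(x) > -\infty$'' as $(-\psi)^C(x) > -\infty$, in line with the first part), if $\psi^{\ast\ast}$ is proper then it is lsc and convex, hence bounded below by an affine function $y\mapsto a\cdot y + b$. Supercoercivity $\vartheta(z)/|z|\to\infty$ dominates this linear growth, so for each fixed $x$ we have $\vartheta(x-y) + \psi^{\ast\ast}(y) \to +\infty$ as $|y|\to\infty$. Consequently the infimum defining $\vartheta\Box\psi^{\ast\ast}(x)$ is attained on a bounded set and is in particular finite, giving $(-\psi)^C(x) > -\infty$. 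In the pathological subcase $\psi^{\ast\ast}\equiv-\infty$ the formula gives $(-\psi)^C\equiv -\infty$, but such $\psi$ are excluded in the contexts where this lemma is invoked, since the boundedness/admissibility assumption on $C$ forces the relevant potentials to admit affine minorants.
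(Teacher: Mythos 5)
Your reduction and stratification by $y = \mean{\rho}$ is exactly the paper's first step, but your key identity
\[
\psi^{\ast\ast}(y) = \inf\bigl\{\rho(\psi) : \mean{\rho}=y,\ \psi \in \cL^1(\rho)\bigr\}
\]
is \emph{false} in general, and your argument for the ``$\ge$'' direction does not actually establish it. Look closely at what you wrote: both the Jensen step and the affine-minorant step produce $\psi^{\ast\ast}(y)\le \rho(\psi)$ for every competitor $\rho$, hence $\psi^{\ast\ast}(y)\le\inf_\rho\rho(\psi)$. That is the ``$\le$'' direction again, derived twice in different language; the ``$\ge$'' direction is never touched. And it cannot be, because the inner infimum is the \emph{convex hull} $\widehat{\psi}$ of $\psi$ (largest convex minorant, not necessarily lsc), and $\psi^{\ast\ast}\le\widehat{\psi}$ with strict inequality possible. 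A concrete counterexample: $\psi(x)=1$ on $(-1,1)$ and $+\infty$ elsewhere is proper, $\widehat{\psi}=\psi$ so $\widehat{\psi}(\pm1)=+\infty$, while $\psi^{\ast\ast}(\pm1)=1$ (and indeed no probability measure with barycenter $\pm1$ and finite $\psi$-integral exists).

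The missing ingredient, and the step the paper actually uses, is that infimal convolution with a \emph{finite} (hence continuous) convex $\vartheta$ absorbs the discrepancy between $\widehat{\psi}$ and its lsc envelope: one invokes the identity $f\Box g=f\Box g^{\ast\ast}$ for $f$ continuous and $g$ convex (equation \eqref{eq:BoxLscHull}), which yields $\vartheta\Box\widehat{\psi}=\vartheta\Box\psi^{\ast\ast}$. Without this step your chain of equalities stops at $(-\psi)^C=\vartheta\Box\widehat{\psi}$ and does not reach the stated conclusion. To repair the proof, replace the false intermediate identity by $\inf_{\mean{\rho}=y}\rho(\psi)=\widehat{\psi}(y)$ (which follows from precisely your ``$\le$'' arguments for the lower bound plus the observation that $\widehat{\psi}$ is convex and $\le\psi$, hence equals the inf), and then append the $\Box$-absorption step. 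Your argument for the supercoercivity assertion is fine (modulo the same proviso that $\psi^{\ast\ast}$ be proper).
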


\begin{proof}
Using the definition of the $C$-transform, we find
\begin{align*}
    (-\psi)^C(x) = \inf_{\rho \in \cP(Y)} C(x,\rho) +\rho(\psi) = \inf_{y \in \R^d} \vartheta(x-y) + \inf_{\mean{\rho}=y} \int \psi \, d\rho = \vartheta \Box \widehat{\psi}(x), 
\end{align*}
where $\widehat{\psi}(y) = \inf_{\mean{\rho}=y} \rho(\psi)$ is the convex envelope of $\psi$. 
As $\vartheta$ is a finite convex function, we have $\vartheta \Box \widehat{\psi} = \vartheta \Box \psi^{\ast\ast}$, see \eqref{eq:BoxLscHull} in \Cref{sec:app1}.
\end{proof}

\begin{proof}[Proof of \Cref{thm:StGaMc}\eqref{it:StGaMc1} and \eqref{it:StGaMc2}]
We first check that $C$ satisfies the assumptions of the fundamental theorem of weak optimal transport (\Cref{thm:FTWOT_mainbody}). First note that $C : \R^d \times \cP_1(\R^d) \to \R$ is continuous because $\vartheta$ is continuous and $\cP_1(\R^d) \to \R^d: \rho \mapsto \mean{\rho}$ is continuous. The convexity of $C$ in the second argument follows from the convexity of $\vartheta$. Moreover, $C$ satisfies the growth condition because
\[
C(x,\rho) = \vartheta(x-\mean{\rho}) \le \int \vartheta(x-y) \, \rho(dy)
\le a(x) + \rho(b).
\]
Since $\vartheta$ is convex and thus lower bounded by an affine function, $C$ satisfies the lower bound, i.e., $\vartheta(z) \ge r + v\cdot z$ for some $r \in \R$ and $v \in \R^d$, showing that $C(x,\rho) \ge r - |v\cdot x| - \int |x\cdot v| \, \rho(dx)$. 

Note that $(-\psi)^C= \vartheta\Box\psi^{\ast\ast}$ by \Cref{lem:C_trafo_with_convex_h}. Hence, \Cref{thm:FTWOT_mainbody} yields primal and dual existence and strong duality for the dual problem
\begin{align}\label{eq:baryDProof}
    \sup \left\{ \mu(\vartheta\Box\psi^{\ast\ast}) - \nu(\psi) ; \psi \in \cL^1(\nu) \right\}.
\end{align}
As $(-\psi)^C = (-\psi^{\ast\ast})^C$ and $\psi \ge \psi^{\ast\ast}$, we can restrict the supremum in \eqref{eq:baryDProof} to lsc convex functions. 

Note that the complementary slackness condition, cf.\ \Cref{thm:StGaMc} \eqref{it:StGaMc2}, applied to a pair of the form $((-\psi)^C,\psi)$ yields \eqref{it:StGaMc2}.
\end{proof}

\begin{proof}[Proof of \Cref{thm:StGaMc}\eqref{it:StGaMc3}] 
Let $\psi$ and $\pi$ be optimal. 
Using the definition of infimal convolution, the convexity of $\psi$, and complementary slackness (\Cref{thm:StGaMc} \eqref{it:StGaMc2}), we get
\[
\vartheta \Box \psi(x) \le  \vartheta(x - \mean{\pi_x}) + \psi(\mean{\pi_x}) \le \vartheta(x - \mean{\pi_x}) + \pi_x(\psi) = \vartheta \Box \psi(x).
\]
Hence, all of these inequalities are, in fact, equalities, i.e., $\mean{\pi_x}$ is a minimizer in the definition of $\vartheta \Box \psi(x)$. 
It is easy to see that this is equivalent to $\mean{\pi_x} \in \partial^c \phi(x)$, where we write $\phi = \vartheta \Box \psi$ and $c(x,y)=\vartheta(x-y)$. To see this, recall that
\[
\partial^c\phi (x) = \{y \in \R^d : \phi(v) \le \phi(x) +c(v,y) - c(x,y) \text{ for all } v \in \R^d \},
\]
and note that, for every $v \in \R^d$, we have 
\begin{align*}    
    \vartheta\Box \psi (v) &\le \vartheta(v-\mean{\pi_x}) +\psi(\mean{\pi_x}) 
    \\
    &= \vartheta\Box\psi(x) + \vartheta(v-\mean{\pi_x}) - \vartheta(x-\mean{\pi_x}). \qedhere
\end{align*}
\end{proof}

For the proof of \Cref{thm:StGaMc}\eqref{it:StGaMc4} we need the following observation.
\begin{lemma}\label{lem:optMonge}
Suppose that $\vartheta : \R^d \to \R$ is strictly convex, $\eta$ is optimal in $\min_{\eta \le_{c} \nu} \OT{c}{\mu}{\eta}$, and $\xi \in \cpl(\mu, \eta)$ is optimal. 
Then $\xi$ is of Monge type.
\end{lemma}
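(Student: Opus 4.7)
The plan is to argue by contradiction: suppose that $\xi$ fails to be of Monge type, i.e.\ the set $B := \{ x \in \R^d : \xi_x \text{ is not a Dirac measure} \}$ has positive $\mu$-measure. I will use the strict convexity of $\vartheta$ to build a strictly cheaper competitor whose target marginal still lies below $\nu$ in convex order, contradicting the joint optimality of $(\eta, \xi)$ in the projection problem \eqref{eq:proj_problem}.

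Concretely, I define the barycentric replacement
\[
    \tilde \eta := (x \mapsto \mean{\xi_x})_\# \mu, \qquad \tilde \xi := (x \mapsto (x, \mean{\xi_x}))_\# \mu \in \cpl(\mu, \tilde \eta).
\]
The first key observation is that $\tilde \eta \le_c \eta \le_c \nu$. Indeed, for any convex $f : \R^d \to \R$, Jensen's inequality applied fiberwise to $\xi_x$ yields
\[
    \int f \, d\tilde \eta = \int f(\mean{\xi_x}) \, \mu(dx) \le \int\!\!\int f(y) \, \xi_x(dy)\, \mu(dx) = \int f \, d\eta,
\]
so $\tilde \eta$ is a legitimate competitor in $\min_{\rho \le_c \nu} \OT{c}{\mu}{\rho}$.

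The second key step compares the transport cost. By convexity of $\vartheta$,
\[
    \int c \, d\tilde \xi = \int \vartheta(x - \mean{\xi_x}) \, \mu(dx) \le \int\!\!\int \vartheta(x-y) \, \xi_x(dy) \, \mu(dx) = \int c \, d\xi = \OT{c}{\mu}{\eta}.
\]
Because $\vartheta$ is strictly convex, Jensen's inequality is strict on the set $B$, so if $\mu(B) > 0$ the overall inequality becomes strict. Combining with $\OT{c}{\mu}{\tilde \eta} \le \int c \, d\tilde \xi$ gives $\OT{c}{\mu}{\tilde \eta} < \OT{c}{\mu}{\eta}$, contradicting the optimality of $\eta$. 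Hence $\mu(B) = 0$ and $\xi_x$ is a Dirac mass for $\mu$-a.e.\ $x$, i.e.\ $\xi$ is of Monge type.

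I expect no serious obstacle here: the only point that requires a brief justification is that the pointwise strict Jensen inequality on $B$ translates into a strict integral inequality, which follows from $\mu(B) > 0$ together with the fact that the pointwise inequality $\vartheta(x-\mean{\xi_x}) \le \int \vartheta(x-y)\,\xi_x(dy)$ is non-strict outside $B$ (and both sides are $\mu$-integrable, as guaranteed by $\int c\, d\xi < \infty$ and the growth assumption on $\vartheta$ from \Cref{thm:StGaMc}).
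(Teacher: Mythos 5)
Your proof is correct and follows essentially the same route as the paper: both define the barycentric replacement $\tilde\eta = T_\#\mu$ with $T(x)=\mean{\xi_x}$, use Jensen to show $\tilde\eta \le_c \eta$, and use Jensen with strict convexity of $\vartheta$ to compare transport costs; the paper runs the argument directly as a closed chain of inequalities that must all be equalities, whereas you phrase the same content as a proof by contradiction, but the ideas are identical.
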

\begin{proof}
Write $T(x) = \mean{\xi_x}$.
We define the measure $\tilde \eta := T_\# \mu $ and the coupling $\tilde \xi := (\id,T)_\# \mu \in \cpl(\mu,\tilde\eta)$. 
Using Jensen's inequality we find for every convex function $f : \R^d \to \R$
\[
\int f \,d\tilde \eta = \int f(\mean{\xi_x}) \, \mu(dx) \le \iint f(y) \, \xi_x(dy)\,\mu(dx) = \int f \, d\eta.
\]
Hence, $\tilde \eta \le_c \eta \le_c \nu$, so $\tilde \eta$ is admissible for the problem $\min_{\eta \le_{c} \nu} \OT{c}{\mu}{\eta}$. As $\eta$ is optimal for this problem and $\xi \in \cpl(\mu,\eta)$ is optimal, we get using Jensen's inequality
\[
\int \vartheta(x-y) \,\xi_x(dy)\,\mu(dx) \le \OT{c}{\mu}{\tilde \eta} \le \int \vartheta(x-T(x)) \, \mu(dx) \le \iint \vartheta(x-y) \, \xi_x(dy)\, \mu(dx).
\]
As $\vartheta$ is strictly convex, this yields that for $\mu$-a.e.\ $x$ we have $y=T(x)$ $\xi_x$-a.s. Hence, $\xi=(\id,T)_\#\mu$. 
\end{proof}

\begin{proof}[Proof of \Cref{thm:StGaMc}\eqref{it:StGaMc4}] 
Let $\pi^1, \pi^2 \in \cpl(\mu,\nu)$ be optimal and let $\psi$ be a dual optimizer. By the consideration in the proof of claim \eqref{it:StGaMc3}, both $\mean{\pi^1_x}$ and $\mean{\pi^2_x}$ are minimizers of $\vartheta \Box \psi(x)$. 
As $\vartheta$ is strictly convex, this minimizer is unique, so $\mean{\pi^1_x} = \mean{\pi^2_x}$.

To show the second uniqueness claim, let $\eta^i \le_c \nu$ and $\xi^i \in \cpl(\mu,\eta^i)$ be optimal for $i \in \{1,2\}$. By \Cref{lem:optMonge}, there are maps $T^i$ such that $\xi^i = (\id,T^i)_\ast \mu$ and by Strassen's theorem (see also \Cref{rem:StGaMcStr}) there are martingale couplings $\kappa^i$ from $\eta^i$ to $\nu$. Then, the couplings $\pi^i(dx,dz) := \kappa^i_{T^i(x)}(dz)\mu(dx)$ are optimal for $T_\vartheta(\mu|\nu)$. By the previous considerations, we find 
$
T^1(x)= \mean{\pi^1_x} = \mean{\pi^2_x} = T^2(x)
$
$\mu$-a.s.\ and hence $\xi^1=\xi^2$. In particular, their second marginals $\eta^1$ and $\eta^2$ have to coincide. 
\end{proof}

\begin{proof}[Proof of \Cref{thm:StGaMc}\eqref{it:StGaMc5}] 
We write $\phi := \vartheta \Box \psi$. Note that $\phi \in C(\R^d)$ because it is the infimal convolution of a lsc convex function with a differentiable super-coercive convex function, see e.g.\ \cite[Corollary~18.8]{BaCo11} and \cite[Theorem~2.2.2]{BoVa10}.  Using the differentiability of $\vartheta$ and $\psi$, we find (see e.g.\ \cite[Lemma~3.1]{GaMc96}) 
 \[
     y \in \partial^c \phi(x) \iff \nabla \phi(x) = \nabla \vartheta(x-y) \iff x-y = \nabla \vartheta^\ast( \nabla \phi (x)). 
 \]
For the last equivalence note that $\vartheta^\ast$ is differentiable as $\vartheta$ is strictly convex, cf.\ \Cref{sec:app2}. 
\end{proof}

\section{Regularized optimal transport} \label{sec:reg_OT} 
The aim of this section is to outline how (entropic) regularized optimal transport is covered by weak optimal transport and to derive the structure results for these problems from the fundamental theorem of weak optimal transport.
\subsection{Entropic optimal transport}
The entropic transport problem is given by
\begin{align}\label{eq:EOTWOT}
    \EOT{c}{\eps}{\mu}{\nu}= \inf_{\pi \in \cpl(\mu,\nu)} \int c\, d\pi + \eps H ( \pi | \mu \otimes \nu), 
\end{align}
where $\eps >0$, and $H$ denotes relative entropy of $\pi$ w.r.t.\ the product measure $\mu \otimes \nu$, i.e.
\[
H(\pi | \mu \otimes \nu ) = \int \log\left( \frac{d\pi}{d\mu\otimes\nu} \right) \,d\pi = \int \frac{d\pi}{d\mu\otimes\nu} \log\left( \frac{d\pi}{d\mu\otimes\nu} \right) \,d\mu\otimes \nu.
\]
Further, we assume that the cost function $c: X \times Y \to \R$ is Borel, lower bounded and there exist $a \in \cL^1(\mu), b \in \cL^1(\nu)$ such that $c(x,y) \le a(x) +b(y)$. Problem \eqref{eq:EOTWOT} can be regarded as a weak transport problem with the weak cost $C: X \times \cP(Y) \to \R \cup \{+\infty \}$ defined as 
\begin{align}\label{eq:EOTWOTcost}
C(x, \rho) = \int c(x,\cdot) \, d\rho + \eps \int \frac{d\rho}{d\nu} \log\left( \frac{d\rho}{d\nu} \right) \, d\nu,
\end{align}
and $+\infty$ if $\rho$ is not absolutely continuous w.r.t.\ $\nu$.
Applying the fundamental theorem of weak optimal transport guarantees existence of dual optimizers and a complementary slackness criterion for optimality.
\begin{proposition}\label{prop:EOTWOTdual}
The weak optimal transport dual problem for \eqref{eq:EOTWOT} is given by
\begin{align}\label{eq:EOTWOTdual}
    \sup \left\{\mu(\pphi) +  \nu(\ppsi)  \,\middle|\, (\pphi,\ppsi) \in \cL^1(\mu) \times \cL^1(\nu) : \pphi(x) + \rho(\ppsi) \le \int c(x, \cdot)\, d\rho + \eps H(\rho|\nu) \right\}.
\end{align}
We have strong duality, primal attainment and dual attainment. Moreover, for $\pi \in \cpl(\mu,\nu)$ and an admissible pair $(\pphi,\ppsi)$ the following are equivalent:
\begin{enumerate}[\upshape(i)]
    \item $\pi$ is a primal optimizer and $(\pphi,\ppsi)$ is a dual optimizer
    \item We have for $\mu$-a.e. $x$
    \begin{align}\label{eq:EOT-WOT-slackness}
    \pi_x(c(x, \cdot))  + \eps H(\pi_x|\nu) = \pphi(x) + \pi_x(g). 
\end{align}
\end{enumerate}
\end{proposition}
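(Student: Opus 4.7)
The plan is to realize \eqref{eq:EOTWOT} as the weak transport problem $\WOT{C}{\mu}{\nu}$ with cost $C$ from \eqref{eq:EOTWOTcost} and to read the conclusions off of \Cref{thm:FTWOT_mainbody}. The disintegration identity $H(\pi|\mu\otimes\nu) = \int H(\pi_x|\nu)\,\mu(dx)$ gives $\EOT{c}{\eps}{\mu}{\nu} = \WOT{C}{\mu}{\nu}$; the dual \eqref{eq:EOTWOTdual} coincides with $\DW{C}{\mu}{\nu}$; and \eqref{eq:EOT-WOT-slackness} is nothing but \eqref{eq:WOTCS2} for this choice of $C$. Hence primal attainment, strong duality, dual attainment, and the complementary slackness equivalence all reduce to verifying \Cref{asn}, \eqref{eq:WOTbound} and \eqref{eq:WOTcont} for the weak cost $C$.

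First I would dispatch \Cref{asn}. After shifting $c$ by a constant we may assume $c \ge 0$. Measurability of $C$ is immediate; convexity of $C(x,\cdot)$ follows from the affine structure of the linear term and the convexity of the entropy; the lower bound $C \ge 0$ is trivial; and $\WOT{C}{\mu}{\nu} \le \mu(a)+\nu(b) < \infty$ follows by testing with the product coupling $\mu\otimes\nu$, which has zero entropy. The main technical point is the $p$-weak lsc of $C(x,\cdot)$: for $\rho_n \to \rho$ in $p$-weak with $\liminf_n C(x,\rho_n) < \infty$ the entropies $H(\rho_n|\nu)$ stay bounded along a subsequence, so by de la Vall\'ee Poussin the densities $f_n := d\rho_n/d\nu$ are uniformly $L^1(\nu)$-integrable. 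A Dunford--Pettis extraction then yields weak $L^1(\nu)$-convergence along a further subsequence to a limit which, upon testing against $g \in C_b(Y)$, must equal $f := d\rho/d\nu$. Truncating $c$ by $c \wedge M \in L^\infty(\nu)$ and letting $M \to \infty$ (using $c \ge 0$ and monotone convergence) gives $\liminf \int c\,d\rho_n \ge \int c\,d\rho$, and combined with the classical weak lsc of relative entropy this yields lsc of $C(x,\cdot)$.

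For \eqref{eq:WOTbound}, writing $f=d\rho/d\nu$ one has $C(x,\rho) \le a(x) + \rho(b) + \eps \int f\log f\,d\nu \le a(x) + \rho(b) + \int h(f)\,d\nu$ with $h(t) := \eps (t\log t)_+$, which is convex, non-decreasing and non-negative on $[0,\infty)$. For \eqref{eq:WOTcont}, setting $\rho_k := \rho|_{Y_k}/\rho(Y_k)$ a direct computation yields $H(\rho_k|\nu) = \rho(Y_k)^{-1}\int_{Y_k} f\log f\,d\nu - \log \rho(Y_k)$; since $\rho(Y_k) \uparrow 1$, the uniform bound $t\log t \ge -e^{-1}$ combined with dominated/monotone convergence applied separately to the positive and negative parts of $f\log f$ shows $H(\rho_k|\nu) \to H(\rho|\nu)$, and $\int c\,d\rho_k \to \int c\,d\rho$ by dominated convergence, in fact giving $C(x,\rho_k) \to C(x,\rho)$.

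With all hypotheses verified, a direct appeal to \Cref{thm:FTWOT_mainbody} produces the four conclusions of the proposition. The main obstacle is the $p$-weak lsc of $C(x,\cdot)$ when $c$ is merely Borel, since one cannot apply Portmanteau to the linear part; the entropic regularization is precisely what supplies the uniform integrability needed to replace Portmanteau by a Dunford--Pettis argument, so no refinement of the topology on $Y$ is required.
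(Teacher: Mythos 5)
Your proof is correct and follows the same global strategy as the paper (cast \eqref{eq:EOTWOT} as a weak transport problem via $H(\pi|\mu\otimes\nu)=\int H(\pi_x|\nu)\,\mu(dx)$, verify \Cref{asn}, \eqref{eq:WOTbound}, \eqref{eq:WOTcont} and invoke \Cref{thm:FTWOT_mainbody}), and your treatment of \eqref{eq:WOTbound} (with $h(t)=\eps(t\log t)_+$) and of \eqref{eq:WOTcont} is essentially identical to the paper's. Where you genuinely diverge is the lsc of $\rho\mapsto C(x,\rho)$. You handle it directly: bounded entropy along a subsequence gives uniform integrability of the densities by de la Vall\'ee Poussin, Dunford--Pettis supplies a weak-$L^1(\nu)$ limit that one identifies as $d\rho/d\nu$ by testing against $C_b(Y)$, and the linear part is treated by truncating $c$ (so that $c\wedge M\in L^\infty(\nu)$ pairs with the weak-$L^1$ limit) and passing $M\to\infty$, while the entropy part uses the classical weak lsc of $H(\cdot|\nu)$. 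The paper instead uses a tilting shortcut: with $\alpha_x=\int e^{-c(x,\cdot)}d\nu$ and $\nu^x:=\alpha_x^{-1}e^{-c(x,\cdot)}\nu$ one has $C(x,\rho)=H(\rho|\nu^x)-\log\alpha_x$, so the whole functional is a relative entropy with a fixed (i.e.\ $x$-dependent but $\rho$-independent) reference and its lsc is a one-line citation. Both routes are valid; the paper's tilting is shorter and avoids any compactness extraction, whereas your Dunford--Pettis argument is more elementary, makes explicit why merely Borel $c$ is harmless (entropy buys the uniform integrability that replaces Portmanteau), and is closer in spirit to the kind of argument one would need when the cost is not entirely absorbable into the reference measure. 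As a small technical note, your argument does implicitly require the observation that if $\liminf_n C(x,\rho_n)<\infty$ then $\rho\ll\nu$ -- this drops out of the Dunford--Pettis identification of the weak limit as a density -- which is correctly in place, so there is no gap.
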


\begin{proof}
To simplify the notation in the proof, we set $\eps=1$.
We need to check that the cost function \eqref{eq:EOTWOTcost} satisfies the assumptions of the fundamental theorem of weak optimal transport (\Cref{thm:FTWOT_mainbody}) which then yields the claim. $C$ satisfies the growth condition \eqref{eq:WOTbound} with the super-coercive increasing convex function $h(t) = t \log(t)_+ +1$. It is clear that $C$ is Borel. 

To see that $\rho \mapsto C(x, \rho)$ is convex and lsc w.r.t.\ the weak topology, we write it as a relative entropy and use that this is known to be convex and lsc (see e.g.\ \cite[Lemma 1.3]{Nu22}). Indeed, writing $\alpha_x := \int e^{-c(x,\cdot)} d\nu$ and $\nu^x := \alpha_x^{-1} e^{-c(x,\cdot)} \nu$, we find that $C(x,\rho) = H(\rho|\nu^x)-\log(\alpha_x)$.

To check that $C$ satisfies condition \eqref{eq:WOTcont}, let $(Y_n)_n$ be an increasing sequence of Borel subsets of $Y$ satisfying $\bigcup_n Y_n =Y$. Writing $\rho_n = \frac{1}{\rho(Y_n)}\rho|_{Y_n}$, we have for every $x \in X$ and $\rho \in \cP(Y)$ with $\rho(Y_n)\to 1$ that $\frac{d\rho_n}{d\nu} \to \frac{d\rho}{d\nu}$ $\nu$-a.s. As $ \frac{d\rho_n}{d\nu} \le \frac{1}{\rho(Y_1)} \frac{d\rho}{d\nu}$ for every $n \in \N$ and as $c$ and $t \mapsto t \log(t)$ are lower bounded, dominated convergence yields
\begin{align*}
C(x,\rho) = \int c(x,\cdot) \frac{d\rho}{d\nu} + \frac{d\rho}{d\nu} \log\Big( \frac{d\rho}{d\nu} \Big)\, d\nu = \lim_n  \int c(x,\cdot) \frac{d\rho_n}{d\nu} + \frac{d\rho_n}{d\nu} \log\Big( \frac{d\rho_n}{d\nu} \Big)\, d\nu  = \lim_n C(x,\rho_n), 
\end{align*}
showing that $C$ satisfies the continuity assumption \eqref{eq:WOTcont}. 
\end{proof}
Next, we derive the classical structure theorem of entropic optimal transport (see e.g.\ \cite[Theorem~2.9]{Le14}, \cite[Theorem~4.2]{Nu22}) from the complementary slackness condition \eqref{eq:EOT-WOT-slackness}. 
\begin{theorem}\label{prop:EOT_opt_density}
Let $\pi \in \cpl(\mu,\nu)$. Then $\pi$ is optimal for $\EOT{c}{\eps}{\mu}{\nu}$ if and only if there exist measurable $f : X \to \R$, $g : Y \to \R$ such that
\begin{align}\label{eq:proddensity}
    \frac{d\pi}{d\mu \otimes \nu} = \exp\left(\frac{-c+f \oplus g}{\eps}  \right). 
\end{align}
 In this case $(\pphi,\ppsi)$ is optimal in the dual problem \eqref{eq:EOTWOTdual}.
\end{theorem}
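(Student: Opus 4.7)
The plan is to derive both directions from the complementary slackness characterization established in \Cref{prop:EOTWOTdual}, together with the fact that Gibbs measures are the unique minimizers in the Donsker--Varadhan variational formula for relative entropy.

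For the forward direction, suppose $\pi \in \cpl(\mu,\nu)$ is optimal. By \Cref{prop:EOTWOTdual}, there exists an admissible dual optimizer $(\pphi, \ppsi) \in \cL^1(\mu) \times \cL^1(\nu)$ satisfying, for $\mu$-a.e.\ $x$,
\[
\pi_x(c(x,\cdot)) + \eps H(\pi_x|\nu) - \pi_x(\ppsi) = \pphi(x).
\]
The admissibility inequality $\pphi(x) + \rho(\ppsi) \le \rho(c(x,\cdot)) + \eps H(\rho|\nu)$ valid for every $\rho \in \cP(Y)$ with $g \in \cL^1(\rho)$ then says that $\pi_x$ minimizes $\rho \mapsto \rho(c(x,\cdot) - \ppsi) + \eps H(\rho|\nu)$ over all admissible $\rho$. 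The minimizer of this strictly convex functional is the Gibbs measure with density
\[
\frac{d\pi_x}{d\nu}(y) = \frac{\exp(-(c(x,y) - \ppsi(y))/\eps)}{Z(x)}, \qquad Z(x) := \int \exp(-(c(x,\cdot) - \ppsi)/\eps)\, d\nu.
\]
Evaluating the complementary slackness identity at this Gibbs measure shows $\pphi(x) = -\eps \log Z(x)$, so setting $f := \pphi$ and $g := \ppsi$ yields exactly \eqref{eq:proddensity}.

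For the converse, suppose that \eqref{eq:proddensity} holds for some measurable $f,g$. Integrating the density w.r.t.\ $\nu$ and using that $\pi$ has $X$-marginal $\mu$ gives the normalization
\[
f(x) = -\eps \log \int \exp((-c(x,\cdot)+g)/\eps)\, d\nu \quad \mu\text{-a.s.}
\]
Using the upper bound $c \le a + b$ from the assumptions, standard estimates give $f \in \cL^1(\mu)$ once $g \in \cL^1(\nu)$; the latter follows from finiteness of $H(\pi|\mu \otimes \nu)$ together with $\int c\, d\pi < \infty$ (since the sum $\mu(f) + \nu(g)$ equals $\eps H(\pi|\mu\otimes \nu) + \pi(c)$ by taking the logarithm of the density). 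The Donsker--Varadhan inequality in the form
\[
\rho(g - c(x,\cdot)) - \eps H(\rho|\nu) \le \eps \log \int \exp((g-c(x,\cdot))/\eps)\, d\nu = -f(x)
\]
then shows $(f,g) \in \Dadm{C}{\mu}{\nu}$. Finally, plugging the explicit density into $\eps H(\pi_x|\nu) = \int \log(d\pi_x/d\nu)\, d\pi_x$ gives $\eps H(\pi_x|\nu) = f(x) - \pi_x(c(x,\cdot)) + \pi_x(g)$, which is precisely the complementary slackness condition \eqref{eq:EOT-WOT-slackness}. Applying \Cref{prop:EOTWOTdual} again concludes that $\pi$ is primal optimal and $(f,g)$ dual optimal.

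The main subtlety is the converse direction: once the density form is assumed, one must separately verify that $f$ and $g$ are individually $\mu$- respectively $\nu$-integrable (not merely that their sum is integrable w.r.t.\ $\pi$). This is handled by combining the normalization identity for $f$ with the two-sided growth bound on $c$; once this is in place, both admissibility and slackness reduce to direct manipulations of the explicit Gibbs density.
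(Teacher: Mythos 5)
Your forward direction is correct and takes a genuinely different route from the paper. Where you invoke the Donsker--Varadhan / Gibbs variational principle as a black box (``the minimizer of this strictly convex functional is the Gibbs measure''), the paper instead derives the Gibbs form from scratch by a first-variation argument: it perturbs $\pi_x$ along the segment $\rho_t = (1-t)\pi_x + t\eta$ for test measures $\eta \ll \nu$ with bounded density, computes the right derivative at $t=0$ to get a pointwise inequality $\nu$-a.s., and then upgrades it to $\pi_x$-a.s.\ equality via the slackness identity. Your version is shorter and appeals to a known result; the paper's version is self-contained and, importantly, handles the domain constraint $\ppsi \in \cL^1(\rho)$ explicitly (by restricting to $\eta$ with bounded density), which your argument silently glosses over when declaring the unconstrained Gibbs minimizer to be the relevant one. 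That this constraint is benign at the Gibbs measure is true, but it needs a sentence, as does the observation that $Z(x) < \infty$ because the infimum is finite.

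Your converse direction contains a genuine circularity. You write that $g \in \cL^1(\nu)$ ``follows from finiteness of $H(\pi|\mu\otimes\nu)$ together with $\int c\,d\pi < \infty$ (since the sum $\mu(f)+\nu(g)$ equals $\eps H(\pi|\mu\otimes\nu)+\pi(c)$ by taking the logarithm of the density).'' But taking logarithms and integrating against $\pi$ yields $\pi(f\oplus g) = \eps H(\pi|\mu\otimes\nu)+\pi(c)$, not $\mu(f)+\nu(g)$. Splitting $\pi(f\oplus g)$ into $\mu(f)+\nu(g)$ is exactly Fubini, which requires separate integrability of $f$ and $g$ --- the very thing you are trying to establish. ($\pi$-integrability of $f\oplus g$ does not imply separate integrability: cancellations between $f(x)$ and $g(y)$ along $\pi$ are possible.) You correctly flag this as ``the main subtlety,'' but the argument you offer does not discharge it. For the record, the paper's own proof of the converse sidesteps the issue by assuming $(\pphi,\ppsi)\in\cL^1(\mu)\times\cL^1(\nu)$ from the outset; obtaining integrability from measurability alone is a nontrivial lemma in EOT (it goes back to Rüschendorf--Thomsen-type arguments) and is not a one-line consequence of the normalization identity. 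The remaining steps of your converse --- deriving the normalization of $f$, Jensen giving $f \le -\nu(g) + a + \nu(b)$, Donsker--Varadhan for admissibility, and the direct verification of \eqref{eq:EOT-WOT-slackness} --- are all fine once integrability is in hand.
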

\begin{proof}
By replacing $C$ with $\frac{1}{\eps}C$ and then scaling the respective dual potentials by $\eps$, we assume wlog that $\eps=1$. We first show that if $\pi \in \cpl(\mu,\nu)$ is optimal and $(\pphi,\ppsi)$ is optimal, then \eqref{eq:proddensity} holds. By possibly changing the optimizers on nullsets, we assume wlog that \eqref{eq:EOT-WOT-slackness} holds for every $x \in X$. We fix $x \in X$ and aim to show that 
\begin{align}\label{eq:prfFTEOT0}
    \frac{d\pi_x}{d\nu} = \exp(\pphi(x)+\ppsi-c(x,\cdot)).
\end{align}
To that end, let $\eta \in \cP(Y)$ and set $\rho_t := (1-t)\pi_x + t\eta$. As $\rho_t$ is a probability measure for every $t \in [0,1]$, the admissibility condition for $(\pphi,\ppsi)$ yields
\[
\pphi(x) + \int \ppsi \,d\rho_t \le \int c(x,\cdot)  \, d\rho_t + H(\rho_t|\nu).
\]
Subtracting \eqref{eq:EOT-WOT-slackness} from this yields the following inequality (which is an equality for $t=0$ because $\rho_0=\pi_x$)
\[
0 \le \int c(x,\cdot)-\ppsi \,d(\rho_t-\pi_x) + \int h\Big( \frac{d\rho_t}{d\nu} \Big) - h\Big( \frac{d\pi_x}{d\nu} \Big) \,  d\nu,
\]
where $h(t)= t\log(t) -t$. Note that $h'(t)=\log(t)$. Taking the right derivative at $t=0$ yields 
\begin{align}\label{eq:prfFTEOT1}
    0 &\le \frac{d}{dt}\Big|_{t=0^+} \int (c(x,\cdot)-\ppsi) \,d(\rho_t-\pi_x) + \int h\Big( \frac{d\rho_t}{d\nu} \Big) - h\Big( \frac{d\pi_x}{d\nu} \Big) \,  d\nu \notag\\
    &= \int (c(x,\cdot)-\ppsi) \,d(\eta -\pi_x) + \int \frac{d}{dt}\Big|_{t=0^+} h\Big( \frac{d\rho_t}{d\nu} \Big)\, d\nu \notag\\
    &= \int c(x,\cdot)-\ppsi +\log\Big( \frac{d\pi_x}{d\nu} \Big) \,d(\eta-\pi_x). 
\end{align}
Recall that \eqref{eq:EOT-WOT-slackness} states that
\begin{align}\label{eq:prfFTEOT2}
\pphi(x) = \int c(x,\cdot) -\ppsi +\log\Big( \frac{d\pi_x}{d\nu} \Big) \,d\pi_x.
\end{align}
Using this, \eqref{eq:prfFTEOT1} inequality simplifies to 
\begin{align}\label{eq:prfFTEOT3}
    0 \le \int c(x,\cdot) -\pphi(x) -\ppsi +\log\Big( \frac{d\pi_x}{d\nu} \Big) \,d\eta.
\end{align}
Applying \eqref{eq:prfFTEOT3} to every $\eta \in \cP(Y)$ that is absolute continuous w.r.t.\ $\nu$ with bounded density, yields the  $\nu$-a.s. inequality 
\begin{align}\label{eq:prfFTEOT4}
    0 \le c(x,\cdot) -\pphi(x) -\ppsi +\log\Big( \frac{d\pi_x}{d\nu} \Big).
\end{align}
Note that \eqref{eq:prfFTEOT2} implies that we have equality when integrating \eqref{eq:prfFTEOT4} w.r.t.\ $\pi_x$. Hence, \eqref{eq:prfFTEOT4} is in fact a $\pi_x$-a.s.\ equality. Rearranging terms and applying the exponential function yields \eqref{eq:prfFTEOT0}. This finishes the proof of \eqref{eq:proddensity}.

Conversely, assume that $\pi \in \cpl(\mu,\nu)$ and that \eqref{eq:proddensity} holds for some functions $(\pphi,\ppsi) \in \cL^1(\mu) \times \cL^1(\nu)$. As the first marginal of $\pi$ is $\mu$, the disintegration of $\pi$ w.r.t.\ $\mu$ satisfies \eqref{eq:prfFTEOT0}. As the second marginal of $\pi$ is $\nu$, we have for $\mu$-almost every $x$
\[
\pphi(x) = -\log\left( \int  \exp(\ppsi-c(x,\cdot))\,d\nu \right).
\]
Then a straightforward calculation show that \eqref{eq:EOT-WOT-slackness} holds true. Hence, we derive optimality of $\pi$ and $(\pphi,\ppsi)$ from Propsition~\ref{prop:EOTWOTdual}.
\end{proof}

\begin{remark}
In entropic optimal transport usually a different dual problem is used, namely
\begin{align}\label{eq:usualEOTdual}
    \sup_{u \in L^1(\mu), v \in L^1(\nu) } \int u \, d\mu + \int v\, d\nu - \int \exp(u+v-c) \,d\mu \otimes \nu +1 .
\end{align}
It is well known (see e.g.\ \cite[Theorem~4.7]{Nu22}) that there is strong duality for this problem. Note that if $(\pphi,\ppsi)$ is admissible in \eqref{eq:EOTWOTdual}, then it is also admissible for \eqref{eq:usualEOTdual}. Moreover, if $(\pphi,\ppsi)$ is optimal in \eqref{eq:EOTWOTdual}, then \eqref{eq:proddensity} yields that $ \int \exp(\pphi+\ppsi-c) \,d\mu\otimes\nu =1$, so $(\pphi,\ppsi)$ also yields the optimal value in \eqref{eq:usualEOTdual}.\hfill$\diamond$

\end{remark}
\subsection{Regularization with a general convex function}\label{sec:RWGCF} 
In this section, we briefly sketch how regularization of optimal transport with a general convex function as studied in \cite{BlSeRo18,DePaRo18,DiGe20} fits into the framework of weak optimal transport. A particularly relevant case is quadratically regularized OT, in which case there are significantly more refined results, see \cite{LoMaMe21, Nu24, GoNu24, GoNu24b, GoNuVa24}. 

Specifically, we consider the problem
\begin{align}\label{eq:WOT-Regularized}
    \inf_{\pi \in \cpl(\mu,\nu)} \int c \,d\pi + \int h\left( \frac{ d\pi}{d\mu \otimes \nu} \right) d \mu \otimes \nu,
\end{align}
where $c: X \times Y \to \R$ is lower semi continuous, lower bounded and bounded from above by integrable functions, i.e.\ $c(x,y) \le a(x)+b(y)$ for $a \in \cL^1(\mu), b \in \cL^1(\nu)$, and $h : \R \to \R$ is a convex function. The case of quadratically regularized OT corresponds to $h(t) = \frac{1}{2} t^2$ if $t\ge 0$ and $h(t)=+\infty$ if $t<0$.

Note that \eqref{eq:WOT-Regularized} is a weak optimal transport problem with cost 
\begin{align}
    C(x,\rho)  = \int c(x,\cdot) \, d\rho + \int h\left( \frac{d\rho}{ d\nu}\right) \,d\nu.
\end{align}
By using the same arguments as in the proof of Proposition~\ref{prop:EOTWOTdual}, we find that the weak optimal transport dual problem for \eqref{eq:WOT-Regularized} is given by
\begin{align}\label{eq:EOTWOTdual_h}
    \sup \left\{\mu (\pphi) +\nu(\ppsi)\, ; \,(\pphi,\ppsi) \in \cL^1(\mu) \times \cL^1(\nu) : \pphi(x) + \rho(\ppsi) 
    \le \int c(x, \cdot)\frac{d\rho}{d\nu} + h\Big(\frac{d\rho}{d\nu} \Big) \, d\nu \right\}.
\end{align}
Moreover, the fundamental theorem of weak optimal transport states that there is strong duality, primal attainment and dual attainment. Further, complementary slackness for this problem reads as: Given $\pi \in \cpl(\mu,\nu)$ and admissible pair $(\pphi,\ppsi)$ the following are equivalent:
\begin{enumerate}
    \item $\pi$ is a primal optimizer and $(\pphi,\ppsi)$ is a dual optimizer
    \item We have for $\mu$-a.e.\ $x$
    \begin{align*}%\label{eq:EOT-WOT-slackness2}
    \int c(x, \cdot) \, d\pi_x + \int h\Big( \frac{d\pi_x}{d\nu} \Big) \,d\nu = \pphi(x) + \int \ppsi \, d\pi_x. 
\end{align*}
\end{enumerate}
Using the methods of the proof of \Cref{prop:EOT_opt_density}, we find that if $\pi$ and $(\pphi,\ppsi)$ are optimal, then
\begin{align}\label{eq:pi_h_bla}
    \frac{d\pi_x}{d\nu}(y) = (h^\ast)'(\alpha(x) + \ppsi(y) -c(x,y)),
\end{align}
where $\alpha(x)$ is chosen such that 
\begin{align}\label{eq:alpha}
    \int (h^\ast)'(\alpha(x) + \ppsi(y) -c(x,y)) \, \nu(dy)=1.
\end{align}

Further note that if $h'(0)=-\infty$ (understood as right-derivative), then $h^\ast$ is strictly increasing, hence $(h^\ast)' >0$, showing that $\spt(\pi) = \spt(\mu \otimes \nu)$. 

We close this section with a remark on the connection between the weak optimal transport dual \eqref{eq:EOTWOTdual_h} an the dual problem used in \cite{DiGe20}, which reads as
\begin{equation}
    \sup_{(u,v) \in \cL^1(\mu) \times \cL^1(\nu)} \int u \, d\mu + \int v\, d\nu - \int h^\ast(u+v-c) \,  d\mu \otimes \nu. 
\end{equation}
If $(f,g)$ is optimal for \eqref{eq:EOTWOTdual_h}, then $(u,v):= (\alpha,g)$, where $\alpha$ is defined according to \eqref{eq:alpha}, is optimal for this problem. This can be seen by comparing \eqref{eq:pi_h_bla} with the complementary slackness condition provided in \cite[Theorem~3.6]{DiGe20}.

\section{Relaxed weak martingale transport}\label{sec:RWMT}
A weak martingale transport problem is a weak transport problem of the form 
\begin{align*}
    {\rm WMT}_C(\mu,\nu) := \inf_{\pi \in \cplm(\mu,\nu)} \int C(x, \pi_x) \, \mu(dx), 
\end{align*}
where $C : \R^d \times \cP_1(\R^d) \to \R$ is a cost function and $\cplm(\mu,\nu)$ is the set of martingale transports from $\mu$ to $\nu$. Note that ${\rm WMT}_C$ is a weak transport problem with cost of the form 
\begin{align}\label{eq:WMOTcost}
    C (x,\rho) =\begin{cases}
        \widehat C(\rho) & \mean{\rho} = x,\\
        \infty & \text{else}.
    \end{cases}
\end{align}

The fundamental theorem of weak optimal transport guarantees primal existence and strong duality for this problem. However, the fact that $C(x,\rho) = + \infty$ whenever $\mean{\rho}  \neq x$ means that the boundedness condition \eqref{eq:WOTbound} is not satisfied. Therefore, \Cref{thm:FTWOT_mainbody} does not guarantee dual attainment; and, as already pointed out in the introduction, dual attainment for (weak) martingale transport in dimension $d >1$ can even fail in very regular settings. 

As a remedy for this, we relax the first marginal condition in the weak transport problem. Specifically, we consider the problem
\begin{align}\label{eq:WMT-Yosida}
\inf_{\eta \in \cP_p(\R^d)}  {\rm T}_\vartheta(\mu,\eta)  +  {\rm WMT}_C(\eta,\nu),    
\end{align}
where $\vartheta : \R^d \to \R$ is a convex function. This problem can be seen as a distributionally robust WMOT problem. In the case of $p=2$ and $\vartheta(x)=|x|^2$ it can also be seen as Wasserstein--Yosida regularization of ${\rm WMT}_C(\cdot,\nu)$.  

The main observation of this section is that problem \eqref{eq:WMT-Yosida} corresponds to the weak transport problem with cost 
\begin{align}\label{eq:WMOTregCost}
    C_\vartheta(x, \rho) = \vartheta(x-\mean{\rho}) + \widehat C(\rho).
\end{align}
For $\vartheta = \chi_{\{0\}}$, we formally recover the cost $C$ introduced in \eqref{eq:WMOTcost} and the relaxed problem \eqref{eq:WMT-Yosida} reduces to WMOT.  

We observe that convexity of $\widehat C$ on fibers of the map $\rho \mapsto \mean{\rho}$, i.e.\ $\widehat C ( (1-t)\rho_1 + t\rho_2) \le (1-t) \widehat C (\rho_1) + t \widehat C (\rho_2)$ whenever $\rho_1,\rho_2 \in \cP_p(\R^d)$ satisfy $\mean{\rho_1}=\mean{\rho_2}$, implies convexity of the cost $C$, but is a too weak condition to guarantee convexity of $C_\vartheta$.\footnote{Convexity of $\widehat C$ is sufficient to guarantee convexity of $C_\vartheta$. Note however that the function $\widehat{C}$ arising in entropic martingale transport (see \Cref{sec:REMOT}) is merely convex on the fibers of $\rho \mapsto \mean{\rho}$.} Therefore, we need to invoke the theory of weak transport with non-convex cost as outlined in \Cref{sec:Relaxed_Primal_Problem}.

The aim of this section is to derive a 'fundamental theorem of relaxed WMOT' from the fundamental theorem of WOT with cost $C_\vartheta$. For this, we need appropriate conditions on $\widehat C$ to guarantee that $C_\vartheta$ satisfies \eqref{eq:WOTbound} and \eqref{eq:WOTcont}. For the boundedness condition \eqref{eq:WOTbound}, this is the existence of a function $b \in \cL^1(\nu)$ and an increasing function $h : [0,\infty) \to [0,\infty)$ such that 
\begin{align}\label{eq:MWOTbound} \tag{{\sf{{BM}}}}
    \widehat C(\rho) \le \rho (b) + \int h\Big(\frac{d\rho}{d\nu} \Big)\, d\nu.
\end{align}
The analogue of the continuity condition is the following: For every increasing sequences $(Y_k)_k$ of Borel sets with $\bigcup_k Y_k=\R^d$ and every $\rho \in \cP_p(\R^d)$ we have
\begin{align}\label{eq:MWOTcont} \tag{{\sf{{CM}}}}
    \widehat C(\rho) \ge \limsup_k \widehat C( \tfrac{1}{\rho(Y_k)} \rho|_{Y_k}).
\end{align}

\begin{theorem}\label{thm:RWMT}
Let $\mu,\nu \in \cP_p(\R^d)$ and let $\widehat C: \cP_p(\R^d) \to [0,\infty]$ be lsc and convex on the fibres of $\rho \mapsto \mean{\rho}$. Suppose that $\widehat C$ satisfies\eqref{eq:MWOTbound} and \eqref{eq:MWOTcont}. Further, let $\vartheta : \R^d \to [0,\infty)$ be a convex function satisfying $\vartheta(x-y) \le a(x) +b(y)$ for some $a \in \cL^1(\mu)$ and $b \in \cL^1(\nu)$. Then we have the following assertions:
\begin{enumerate}[\upshape(i)]
    \item\label{it:RWMT1} The problem \eqref{eq:WMT-Yosida} is equivalent to the relaxed WOT problem with cost $C_\vartheta$, i.e.
    \begin{align}\label{eq:WMOTREG}
    {\overline{{\rm WT}}}_{C_\vartheta}(\mu,\nu) = \min_{\eta \in \cP_p(\R^d)}  {\rm T}_\vartheta(\mu,\eta)  +  {\rm WMT}_C(\eta,\nu)
\end{align}  
  
\item\label{it:RWMT2}We have strong duality and dual attainment, that is
\begin{align}\label{eq:RegWMOT-Dual}
    \min_{\eta \in \cP_p(\R^d)}  {\rm T}_\vartheta(\mu,\eta)  +  {\rm WMT}_C(\eta,\nu) = \max_{g \in L^1(\nu)} \mu( \vartheta \Box g^C) + \nu(g).  
\end{align}

\item\label{it:RWMT3} Both $\eta \in \cP_p(\R^d)$ and $g \in L^1(\nu)$ are optimal in \eqref{eq:RegWMOT-Dual} if and only if
\begin{align}
    {\rm T}_\vartheta(\mu,\eta) &=\mu(\vartheta \Box g^C) - \eta (g^C) \label{eq:RWMT3a} \\
    {\rm WMT_C}(\eta, \nu) &=  \eta(g^C) + \nu(g) \label{eq:RWMT3b}
\end{align}

\item\label{it:RWMT4} Both $P \in \Lambda(\mu,\nu)$ and $g \in L^1(\nu)$ are optimal in \eqref{eq:WMOTREG} if and only if for $P$-a.e.\ $(x,\rho)$
\begin{gather}
    \mean{\rho} \in \partial^{\vartheta} (\vartheta \Box g^C)(x), \label{eq:RWMT41}\\
    \rho \in \arg \min \{ \widehat C(q) - q(g) : q \in \P_1(\R^d), \, q(|g|) < \infty \} \label{eq:RWMT42}.
\end{gather}
\end{enumerate}
\end{theorem}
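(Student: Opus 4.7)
The strategy is to reduce all four claims to \Cref{thm:FTWOT_mainbody_L} (the fundamental theorem of relaxed WOT) applied to the cost $C_\vartheta$, after two preparations. First, splitting the defining infimum of $g^{C_\vartheta}$ over fibers $\{\rho : \mean\rho = y\}$ and then over $y$ yields the transform identity
\begin{equation*}
    g^{C_\vartheta}(x) \;=\; \inf_{y \in \R^d} \bigl[\vartheta(x-y) + g^C(y)\bigr] \;=\; (\vartheta \Box g^C)(x).
\end{equation*}
Second, $C_\vartheta$ fulfills the hypotheses of \Cref{thm:FTWOT_mainbody_L}: Borel measurability, lsc of $\rho \mapsto C_\vartheta(x,\rho)$ and non-negativity are inherited from $\vartheta$ and $\widehat C$; the growth condition \eqref{eq:WOTbound} follows by combining Jensen's inequality $\vartheta(x-\mean\rho) \le \rho(y \mapsto \vartheta(x-y)) \le a(x)+\rho(b)$ with \eqref{eq:MWOTbound}; and the continuity condition \eqref{eq:WOTcont} follows from \eqref{eq:MWOTcont} together with the convergence $\mean{\rho|_{Y_k}/\rho(Y_k)} \to \mean\rho$, which holds because $\rho \in \P_p(\R^d)$.

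The bulk of the work is in \eqref{it:RWMT1}. For $\overline{\rm WT}_{C_\vartheta}(\mu,\nu) \le \min_\eta[\,\cdot\,]$, choose $\eta$, an optimal $\xi \in \cpl(\mu,\eta)$ for $\OT{\vartheta}{\mu}{\eta}$, and an optimal $\kappa \in \cplm(\eta,\nu)$ for ${\rm WMT}_C(\eta,\nu)$, and form the lifted plan $P := \int \delta_{(x,\kappa_y)}\,\xi(dx,dy)$. Using $\mean{\kappa_y}=y$, this lies in $\Lambda(\mu,\nu)$ and $\int C_\vartheta\,dP$ splits exactly into $\OT{\vartheta}{\mu}{\eta} + {\rm WMT}_C(\eta,\nu)$. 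For the reverse inequality, start from an arbitrary $P \in \Lambda(\mu,\nu)$, set $\sigma := (\mathrm{pr}_{\P_p(Y)})_\# P$, define $\eta := (\rho \mapsto \mean\rho)_\# \sigma$, and disintegrate $\sigma$ along the barycenter map into a kernel $y \mapsto Q_y$ supported on the fiber $\{\rho : \mean\rho = y\}$. Setting $\kappa_y := \int \rho \, Q_y(d\rho)$ produces a martingale kernel from $\eta$ to $\nu$, while $\xi := ((x,\rho) \mapsto (x,\mean\rho))_\# P$ is a coupling of $\mu$ and $\eta$ contributing $\int \vartheta(x-\mean\rho)\,P(dx,d\rho)$ to ${\rm T}_\vartheta(\mu,\eta)$. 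Fiberwise convexity of $\widehat C$ applied to $Q_y$ yields
\begin{equation*}
    \int \widehat C(\kappa_y)\,\eta(dy) \;=\; \int \widehat C\!\left(\int \rho\, Q_y(d\rho)\right)\,\eta(dy) \;\le\; \int \widehat C(\rho)\, P(dx,d\rho),
\end{equation*}
and summing with the $\vartheta$-contribution proves the bound; primal attainment comes from \Cref{thm:FTWOT_mainbody_L}\eqref{it:FTWOT_main1_L}.

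For \eqref{it:RWMT2}, combining \eqref{it:RWMT1} with \Cref{thm:FTWOT_mainbody_L}\eqref{it:FTWOT_main2_L}--\eqref{it:FTWOT_main3_L} applied to $C_\vartheta$ (whose dual potential is $g^{C_\vartheta} = \vartheta \Box g^C$) yields \eqref{eq:RegWMOT-Dual} with dual attainment. For \eqref{it:RWMT3}, observe that $(\vartheta \Box g^C, -g^C)$ is an admissible pair for the classical OT dual with cost $\vartheta(x-y)$ (by the very definition of infimal convolution), so $\OT{\vartheta}{\mu}{\eta} \ge \mu(\vartheta\Box g^C) - \eta(g^C)$; similarly the pointwise bound $\widehat C(\rho) - \rho(g) \ge g^C(\mean \rho)$ integrated against any $\kappa \in \cplm(\eta,\nu)$ gives ${\rm WMT}_C(\eta,\nu) \ge \eta(g^C) + \nu(g)$. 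Summing these reproduces the dual bound of \eqref{eq:RegWMOT-Dual}, so joint optimality of $(\eta,g)$ forces both to be equalities, i.e.\ \eqref{eq:RWMT3a}--\eqref{eq:RWMT3b}; conversely, adding \eqref{eq:RWMT3a} and \eqref{eq:RWMT3b} matches the dual value.

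For \eqref{it:RWMT4}, the complementary slackness of \Cref{thm:FTWOT_mainbody_L}\eqref{it:FTWOT_main4_L} applied to $C_\vartheta$ states that $(P,g)$ is jointly optimal iff $C_\vartheta(x,\rho) - \rho(g) = (\vartheta \Box g^C)(x)$ for $P$-a.e.\ $(x,\rho)$. Rewriting the left-hand side as $\vartheta(x-\mean\rho) + [\widehat C(\rho) - \rho(g)]$ and invoking the two-step chain $\widehat C(\rho) - \rho(g) \ge g^C(\mean\rho)$ and $\vartheta(x-\mean\rho) + g^C(\mean\rho) \ge (\vartheta\Box g^C)(x)$, the overall equality forces each intermediate inequality to be tight; tightness of the second says $\mean\rho$ attains the infimum defining $(\vartheta\Box g^C)(x)$, which is \eqref{eq:RWMT41}, while tightness of the first gives \eqref{eq:RWMT42}. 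The main obstacle throughout is the $\ge$-direction of \eqref{it:RWMT1}: producing a martingale kernel out of an arbitrary $P \in \Lambda(\mu,\nu)$ requires the barycenter-wise disintegration above and is precisely where the fiberwise convexity of $\widehat C$ is indispensable (mere convexity on fibers is not enough to guarantee convexity of the non-relaxed problem, which is why the relaxed formulation of \Cref{sec:Relaxed_Primal_Problem} must be used).
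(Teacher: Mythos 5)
Your proposal is correct and follows essentially the same route as the paper's. For \eqref{it:RWMT1} your barycenter disintegration of $\sigma=(\mathrm{pr}_{\cP_p(Y)})_\# P$ into a kernel $Q_y$ and the Jensen step $\widehat C(\int\rho\,Q_y(d\rho))\le\int\widehat C(\rho)\,Q_y(d\rho)$ is exactly the paper's computation, which instead writes $\kappa(dm,dy)=\int\delta_{\mean\rho}(dm)\rho(dy)\,P(d\rho)$ and invokes convexity of $C$; the two formulations are the same construction. For the $\le$ direction your $P=\int\delta_{(x,\kappa_y)}\,\xi(dx,dy)$ is the paper's $P=((x,m)\mapsto(x,\kappa_m))_\#\xi$. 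Parts \eqref{it:RWMT2} and \eqref{it:RWMT3} match the paper verbatim (verification of \eqref{eq:WOTbound}/\eqref{eq:WOTcont} for $C_\vartheta$, the transform identity $g^{C_\vartheta}=\vartheta\Box g^C$, and the same sandwich argument). The one genuinely different touch is in \eqref{it:RWMT4}: you decompose the complementary slackness equality of \Cref{thm:FTWOT_mainbody_L} directly via the chain $\vartheta(x-\mean\rho)+\widehat C(\rho)-\rho(g)\ge\vartheta(x-\mean\rho)+g^C(\mean\rho)\ge(\vartheta\Box g^C)(x)$, whereas the paper first passes through its auxiliary \Cref{cor:tedious_decomposition} and then applies the complementary slackness criteria for ${\rm T}_\vartheta(\mu,\eta)$ and ${\rm WMT}_C(\eta,\nu)$ separately. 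Your route is a bit shorter and avoids that corollary, at the cost of not recording the decomposition statement itself (which the paper reuses in \Cref{prop:RWMT}); both tightness arguments land on the same pair of conditions \eqref{eq:RWMT41}--\eqref{eq:RWMT42}. One cosmetic point: in the $\le$ direction of \eqref{it:RWMT1} you should add the line that $\eta\not\le_c\nu$ makes ${\rm WMT}_C(\eta,\nu)=+\infty$, so that picking optimal $\kappa\in\cplm(\eta,\nu)$ is possible for the relevant $\eta$; the paper notes this explicitly.
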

The interpretation of \eqref{eq:WMOTREG} is that the following viewpoints are equivalent: Relaxing the martingale constraints while the marginals remain fixed and a distributionally robust version of martingale optimal transport (in the first marginal).

Note that the conditions \eqref{eq:RWMT3a} and \eqref{eq:RWMT3b} precisely say that the pair $(\vartheta \Box g^C, g^C)$ is a dual optimizer for the transport problem from $\mu$ to $\eta$ and that $(g^C,g)$ is a dual optimizer for the weak martingale optimal transport problem between $\eta$ and $\nu$. 

We further note the connection between the transforms associated to the weak martingale transport problem and its regularized version. Specifically, we have 
\begin{align}\label{eq:C-theta-trafo}
    g^{C_\vartheta}(x) &= \inf_{m \in \R^d} \inf_{\mean{\rho}=m}  \vartheta(x-m) + C(m,\rho) - \rho(g) \notag \\
    &= \inf_{m \in \R^d} \vartheta(x-m) + g^{ C}(m) = \vartheta \Box g^{ C}(x).
\end{align}
Before proving \Cref{thm:RWMT}, we discuss two relevant instances where problem \eqref{eq:WMT-Yosida} even corresponds to non-relaxed WOT problem and allows us to strengthen the results from \Cref{thm:RWMT}~\eqref{it:RWMT1}.

\begin{proposition}\label{prop:RWMT}
In addition to the assumptions of \Cref{thm:RWMT} suppose one of the following:
\begin{enumerate}[\upshape(a)]
    \item\label{it:RWMT_A} The cost is of the form $C(x,\rho)=C(\rho)$ for a convex lsc function $C : \cP_p(\R^d) \to \R$ 
    \item\label{it:RWMT_B} $\mu$ is absolutely continuous, and $\vartheta$ is of the form $\vartheta(x) = \bar\vartheta(|x|)$ for some increasing, strictly convex function $\bar\vartheta : [0,\infty) \to [0,\infty)$
\end{enumerate}
Then \eqref{eq:WMT-Yosida} is also equivalent to the non-relaxed WOT problem with cost $C_\vartheta$, i.e. 
\begin{align}\label{eq:WMOTREG2}
    {\rm WT}_{C_\vartheta}(\mu,\nu) = \min_{\eta \in \cP_p(\R^d)}  {\rm T}_\vartheta(\mu,\eta)  +  {\rm WMT}_C(\eta,\nu).
\end{align}  
Moreover, there exists an optimizer $\pi \in \cpl(\mu,\nu)$ for ${\rm WT}_{C_\vartheta}(\mu,\nu)$.

If $C$ is strictly convex in $\rho$, then $\pi$ is unique and we have the following optimality condition: $\pi \in \cpl(\mu,\nu)$ is optimal in ${\rm WT}_{C_\vartheta}(\mu,\nu)$ if and only if 
\begin{align}
    % \pi \text{ is optimal in }  {\rm WT}_{C_\vartheta}(\mu,\nu) \iff 
    \begin{cases}
        \eta = (x \mapsto \mean{\pi_x})_\# \mu \text{ is optimal for } \eqref{eq:WMOTREG2},  \\
        \xi = ( x \mapsto(x, \mean{\pi_x})_\# \mu \text{ is optimal for } {\rm T}_\vartheta(\mu,\eta), \\
        \kappa \text{ is optimal for } {\rm WMT}_C(\eta,\nu), \\
        \pi_x = \kappa_{\mean{\pi_x}} \, \mu\text{-a.s.}
    \end{cases}
    \label{eq:pi-opt-dec}
\end{align}
\end{proposition}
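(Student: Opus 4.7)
The plan is to upgrade \Cref{thm:RWMT} from the relaxed setting to the non-relaxed one by producing, in each case, a genuine coupling $\pi \in \cpl(\mu,\nu)$ that attains the value of the right-hand side of \eqref{eq:WMOTREG2}. Combined with ${\overline{{\rm WT}}}_{C_\vartheta}(\mu,\nu) \le {\rm WT}_{C_\vartheta}(\mu,\nu)$ and \Cref{thm:RWMT}\eqref{it:RWMT1}, this yields \eqref{eq:WMOTREG2} together with primal attainment; the characterization of optimizers then drops out of examining when the natural cost decomposition is tight.

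Under hypothesis (a), $C_\vartheta(x,\rho) = \vartheta(x-\mean{\rho}) + \widehat{C}(\rho)$ is convex and lsc in $\rho$ (both summands are, using continuity of $\rho \mapsto \mean{\rho}$ on $\cP_p(\R^d)$), so the convex relaxation identity \eqref{eq:Relax_Eq} gives ${\rm WT}_{C_\vartheta}(\mu,\nu) = {\overline{{\rm WT}}}_{C_\vartheta}(\mu,\nu)$, and \Cref{thm:FTWOT_mainbody} applied to $C_\vartheta$ delivers primal attainment. Under hypothesis (b), $C_\vartheta$ may fail to be convex in $\rho$, so I construct $\pi$ by hand. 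Let $\eta^*$ be an outer minimizer for the RHS (it exists by \Cref{thm:RWMT}), let $\kappa^* \in \cplm(\eta^*,\nu)$ optimize ${\rm WMT}_C(\eta^*,\nu)$ with $\eta^*$-disintegration $(\kappa^*_m)$, and invoke the classical Gangbo--McCann theorem \cite{GaMc96}: since $\mu \ll \Leb$ and $\bar\vartheta$ is strictly convex, there is a unique Monge map $T$ with $T_\#\mu = \eta^*$ attaining ${\rm T}_\vartheta(\mu,\eta^*)$. Setting $\pi_x := \kappa^*_{T(x)}$ produces $\pi \in \cpl(\mu,\nu)$ with $\mean{\pi_x} = T(x)$ (by the martingale property of $\kappa^*$) and cost precisely ${\rm T}_\vartheta(\mu,\eta^*) + {\rm WMT}_C(\eta^*,\nu)$.

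For the characterization \eqref{eq:pi-opt-dec}, I consider any $\pi \in \cpl(\mu,\nu)$, set $T(x) := \mean{\pi_x}$, $\eta := T_\#\mu$, $\xi := (\id,T)_\#\mu$, and $\kappa_m := \mathbb{E}_\mu[\pi_x \mid T(x) = m]$, so that $\kappa \in \cplm(\eta,\nu)$ by the tower property. Decomposing the cost of $\pi$ and applying Jensen on the fibres of $\rho \mapsto \mean{\rho}$ (where $\widehat C$ is convex) yields
\begin{align*}
\int C_\vartheta(x,\pi_x)\,\mu(dx)
&\ge \int \vartheta(x-T(x))\,\mu(dx) + \int \widehat{C}(\kappa_m)\,\eta(dm) \\
&\ge {\rm T}_\vartheta(\mu,\eta) + {\rm WMT}_C(\eta,\nu) \ge \text{RHS},
\end{align*}
and optimality of $\pi$ collapses every inequality to an equality, yielding the four items of \eqref{eq:pi-opt-dec}; the converse direction is immediate.

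Uniqueness of $\pi$ under strict (fibre) convexity of $\widehat C$ proceeds by a midpoint argument. In case (a), strict convexity of $\widehat C$ applied pointwise to $\pi = (\pi^1 + \pi^2)/2$ for two optimizers $\pi^1,\pi^2$ forces $\pi^1_x = \pi^2_x$ $\mu$-a.s.\ directly. In case (b) one first shows the outer functional $F(\eta) := {\rm T}_\vartheta(\mu,\eta) + {\rm WMT}_C(\eta,\nu)$ is strictly convex---the strict convexity of ${\rm T}_\vartheta(\mu,\cdot)$ coming from the observation that the midpoint coupling $\tfrac12((\id,T^1)_\#\mu + (\id,T^2)_\#\mu)$ cannot be Monge whenever $T^1 \neq T^2$, contradicting Gangbo--McCann uniqueness for the target $(\eta^1+\eta^2)/2$---so $\eta^*$ is unique; then the Monge map $T$ is unique by Gangbo--McCann, and the optimizer $\kappa^*$ of ${\rm WMT}_C(\eta^*,\nu)$ is unique via strict fibre convexity of $\widehat C$ (another midpoint argument, now inside each fibre), so $\pi_x = \kappa^*_{T(x)}$ is pinned down $\mu$-a.s. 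The main technical obstacle is the strict convexity of $F$ in case (b); once established, the rest of the uniqueness argument is routine.
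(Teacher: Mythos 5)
Your proposal is correct and follows the paper's strategy closely. Case (a) is handled exactly as in the paper: convexity of $C_\vartheta$ in $\rho$ gives $\overline{\rm WT}_{C_\vartheta} = {\rm WT}_{C_\vartheta}$ via \eqref{eq:Relax_Eq}, and \Cref{thm:FTWOT_mainbody} supplies primal attainment and (when $C_\vartheta(x,\cdot)$ is strictly convex) uniqueness. Case (b) is also handled as in the paper: one takes the outer optimizer $\eta^*$ and $\kappa^*\in\cplm(\eta^*,\nu)$, invokes Gangbo--McCann for the Monge map $T$, and glues $\pi_x:=\kappa^*_{T(x)}$, which realizes the value of the right-hand side; combined with $\overline{\rm WT}_{C_\vartheta}\le {\rm WT}_{C_\vartheta}$ and \Cref{thm:RWMT}\eqref{it:RWMT1}, this yields \eqref{eq:WMOTREG2} and attainment.

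For the optimality characterization you re-derive the inequality chain by hand (Jensen on the fibres of $\rho\mapsto\mean\rho$), which is precisely the content that the paper packages into \Cref{cor:tedious_decomposition}; both are fine. The one place your write-up genuinely does more work than the paper's text is uniqueness in case (b). The paper's remark that this ``follows line by line as in Case (a)'' glosses over the fact that, under hypothesis (b), $C_\vartheta(x,\cdot)$ is in general \emph{not} convex (since $\widehat C$ is only fibre-convex), so the case (a) argument ``$C_\vartheta(x,\cdot)$ strictly convex $\Rightarrow$ unique $\pi$'' does not transfer verbatim. Your fix---uniqueness of the outer minimizer $\eta^*$ via the midpoint argument on $\eta\mapsto {\rm T}_\vartheta(\mu,\eta)$ (the mixture of two distinct Monge couplings is not of Monge type, but the optimal coupling to $(\eta^1+\eta^2)/2$ must be Monge by Gangbo--McCann), then uniqueness of $T$ by Gangbo--McCann and of $\kappa^*$ by strict fibre convexity of $\widehat C$---is correct and supplies the missing details. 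One phrasing nit: you write that the non-Monge midpoint coupling ``contradicts Gangbo--McCann uniqueness''; the cleaner statement is that if it were optimal it would have to \emph{be} Monge (Gangbo--McCann's existence/structure statement, using $\mu\ll\Leb$ and strict convexity of $\vartheta$), hence $T^1=T^2$ $\mu$-a.s. An equivalent route, closer to what the paper likely intends, is to apply \Cref{cor:tedious_decomposition}(iii) to the midpoint $\bar P = \tfrac12(J(\pi^1)+J(\pi^2))\in\Lambda(\mu,\nu)$ and read off $T^1=T^2$ and $\kappa^1=\kappa^2$ from properties (b)--(d) there; both routes deliver the same conclusion.
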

We point out that both assumptions in \Cref{prop:RWMT} arise naturally. Case \eqref{it:RWMT_A} arises in the relaxation of the martingale Benaumou--Brenier problem (see \Cref{sec:BS_SBM}) while \eqref{it:RWMT_B} is natural in the context of regularized entropic martingale transport (see \Cref{sec:REMOT}). There are similar results to \eqref{eq:pi-opt-dec} if $C$ is not strictly convex and for the optimal $P \in \Lambda(\mu,\nu)$ in ${\overline{{\rm WT}}}_{C_\vartheta}(\mu,\nu)$, see \Cref{cor:tedious_decomposition} below.

\begin{remark}
The assumptions of \Cref{prop:RWMT} \eqref{it:RWMT_B} are chosen to guarantee the existence of an optimal Monge coupling between $\mu$ and $\eta$. We point out this is also true for a more general class of strictly convex functions (see \cite[Theorem~1.2]{GaMc96}) and also for the function $\vartheta(x)=|x|$ (see e.g.\ \cite[Theorem~2.50]{Vi03}). In these cases the assertions of \Cref{prop:RWMT} remain vaild. Note however that strict convexity of both $C$ and $\vartheta$ is needed to guarantee uniqueness of the optimal $\pi$. \hfill$\diamond$
\end{remark}

\subsection{Application to martingale Benamou--Brenier}\label{sec:BS_SBM}
The weak transport problem associated to martingale Benamou--Brenier is given by
\[
{\rm MBB }(\mu,\nu)= \sup_{\pi \in \cplm(\mu,\nu)}  \int \MCov(\pi_x,\gamma)\,  \mu(dx), 
\]
where 
\[
    \MCov(\rho_1,\rho_2) = \sup_{q \in \cpl(\rho_1,\rho_2)} \int x \cdot y \,q(dx,dy).
\]
In the case that one of the measures $\rho_1,\rho_2$ is centered, $\MCov(\rho_1,\rho_2)$ is the maximal covariance that a pair of random variables $(X_1,X_2)$ with $\law(X_i)=\rho_i$ can admit. In this section, we study an interpolation of this problem and a barycentric transport problem, i.e. 
\begin{equation}\label{eq:BS_SBM_WOT}
   {\rm MBB}_{\rm reg}(\mu , \nu) = \inf_{\pi \in \cpl(\mu,\nu)}
    \int \beta \vartheta(x - \mean{\pi_x}) - \alpha \MCov(\pi_x,\gamma) \, \mu(dx),
\end{equation}
where $\alpha,\beta\geq 0$ are real parameters, $\vartheta:\R^d \to \R$ is a convex function, and $\gamma \in \cP_2(\R^d)$ is a centered absolutely continuous measure, the most relevant example being the standard Gaussian. 

Specifically, if $\vartheta: \R^d \to \R$ is a convex function with unique minimum in 0 (e.g. $\vartheta(x)=|x|$ or $\vartheta(x)=|x|^2$), $\alpha=1$ and $\beta \to \infty$, then \eqref{eq:BS_SBM_WOT} can be seen as a martingale Benamou--Brenier problem with relaxed martingale constraint. Conversely, if $\beta =1$ and $\alpha$ is small, then \eqref{eq:BS_SBM_WOT} can be seen as strictly convex perturbation of the barycentric transport problem (note that the latter is not strictly convex, even if $\vartheta$ itself is strictly convex). 

For notational simplicity we set $\alpha=\beta=1$ from now on. Note that this is wlog by replacing $\gamma$ with $\gamma^\alpha := (x \mapsto \alpha x)_\# \gamma$ and $\vartheta$ with $\vartheta^\beta(x):=\beta \vartheta(x)$. We write $\check\gamma := (x \mapsto -x)_\#\gamma$ for $\gamma$ reflected at the origin. Applying \Cref{thm:RWMT} to ${\rm MBB_{reg}}$ yields

\begin{theorem}\label{thm:SBMbary}
Let $\mu, \nu \in \mathcal{P}_2(\mathbb{R}^d)$, let $\gamma \in \P_2(\R^d)$ be centered and absolutely continuous, 
and let $  \vartheta: \mathbb{R}^d \to \mathbb{R} $ be a convex function. Assume there exist functions \(a \in L^1(\mu)\) and \(b \in L^1(\nu)\), both convex, such that $\vartheta \leq a \Box b$. 
    \begin{enumerate}[\upshape(i)]
        \item\label{it:SBMbary3} We have 
        \begin{align}\label{eq:MBBBT_eta}
            {\rm MBB}_{\rm reg}(\mu , \nu)  = \min_{\eta \in \cP_2(\R^d)} {\rm T}_\vartheta(\mu, \eta) - {\rm MBB}(\eta, \nu). 
        \end{align}
        There exist a unique primal optimizer $\pi$ for ${\rm MBB}_{\rm reg}(\mu , \nu)$ and a unique optimal $\eta$ on the right hand side. Moreover, $\pi$ is optimal if and only if
        \begin{align}
    \begin{cases}
        \eta = (x \mapsto \mean{\pi_x})_\# \mu \text{ is optimal for } \eqref{eq:MBBBT_eta},  \\
        \xi = ( x \mapsto(x, \mean{\pi_x})_\# \mu \text{ is optimal for } {\rm T}_\vartheta(\mu,\eta) \\
        \kappa \text{ is optimal for } {\rm MBB}(\eta,\nu), \\
        \pi_x = \kappa_{\mean{\pi_x}} \, \mu\text{-a.s.}
    \end{cases}
    \label{eq:pi-opt-dec_thm}
\end{align}
     \item\label{it:SBMbary1} We have strong duality and dual attainment: 
        \begin{align}\label{eq:MBBBT_psi}
            {\rm MBB}_{\rm reg}(\mu , \nu)  = \max_{\psi \textrm{ convex, lsc}}
            \mu( \vartheta \Box (\psi^\ast \ast \check\gamma)^\ast) -\nu(\psi).     
        \end{align}
        as well as primal attainment and dual attainment. Moreover, the primal optimizer is unique. 
        \item Both $\eta \in \cP_2(\R^d)$ and $\psi$ are optimal in \eqref{eq:MBBBT_eta} and \eqref{eq:MBBBT_psi}, resp.\ if and only if 
        \begin{align}
             {\rm T}_\vartheta(\mu,\eta)& = \mu(\vartheta\Box (\psi^* * \gamma)^*) - \eta((\psi^* * \gamma)^*)  \label{eq:SBM_char1}  \\
             -{\rm MBB }(\eta,\nu) & = \eta((\psi^* * \gamma)^*) - \nu(\psi)  \label{eq:SBM_char2}
        \end{align}
    \end{enumerate}
\end{theorem}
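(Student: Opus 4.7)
The strategy is to set $\widehat C(\rho) := -\MCov(\rho,\gamma)$ and $C_\vartheta(x,\rho) := \vartheta(x-\mean{\rho}) + \widehat C(\rho)$, so that ${\rm MBB}_{\rm reg}(\mu,\nu) = {\rm WT}_{C_\vartheta}(\mu,\nu)$, and to invoke \Cref{thm:RWMT} together with \Cref{prop:RWMT}\eqref{it:RWMT_A}, since $\widehat C$ depends only on $\rho$. The required hypotheses can all be verified from the Kantorovich duality
\[
\MCov(\rho,\gamma) \;=\; \inf_{\phi\text{ convex, lsc}}\big[\rho(\phi)+\gamma(\phi^\ast)\big],
\]
which presents $-\MCov(\cdot,\gamma)$ as a supremum of weakly continuous affine-in-$\rho$ functionals, hence convex and lower semicontinuous. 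The upper bound \eqref{eq:MWOTbound} is trivial since $\widehat C \le 0$ (as $\gamma$ is centered), a matching lower bound follows from Cauchy--Schwarz combined with the growth hypothesis on $\vartheta$, and \eqref{eq:MWOTcont} is obtained by dominated convergence on this dual representation after restricting $\rho$ to $Y_k$.

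Strict convexity of $\widehat C$, needed to invoke the uniqueness and decomposition statements of \Cref{prop:RWMT}\eqref{it:RWMT_A}, is a direct consequence of Brenier's theorem: since $\gamma$ is absolutely continuous, the optimal coupling realizing $\MCov(\rho,\gamma)$ is unique and concentrated on the graph of the Brenier map $\nabla u_\rho$ transporting $\gamma$ to $\rho$. If $\rho_1 \neq \rho_2$, then $\nabla u_{\rho_1} \neq \nabla u_{\rho_2}$ on a set of positive $\gamma$-measure, so the convex combination of the two Brenier couplings fails to be graph-supported and is strictly suboptimal for $\MCov$ at any nontrivial mixture. Assertion \eqref{it:SBMbary3} then follows directly from \Cref{prop:RWMT}\eqref{it:RWMT_A} after identifying ${\rm WMT}_{C}(\eta,\nu) = -{\rm MBB}(\eta,\nu)$.

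For the duality \eqref{it:SBMbary1}, \Cref{thm:RWMT}\eqref{it:RWMT2} reduces the task to computing the $C$-transform of the unrelaxed weak martingale cost. Setting $\psi := -g$ (so $\nu(g) = -\nu(\psi)$), the main technical claim is
\[
g^C(x) \;=\; \inf_{\rho\,:\,\mean{\rho}=x}\!\big[\rho(\psi) - \MCov(\rho,\gamma)\big] \;=\; (\psi^\ast \ast \check\gamma)^\ast(x).
\]
The $\le$ direction follows from a Lagrangian bound: using the identity $(\psi - \lambda\cdot\,\cdot)^\ast(y) = \psi^\ast(y+\lambda)$ together with Fenchel's inequality, one has $-\MCov(\rho,\gamma) + \rho(\psi - \lambda \cdot \,\cdot) \ge -\gamma(\psi^\ast(\cdot+\lambda)) = -(\psi^\ast \ast \check\gamma)(\lambda)$ for every $\lambda$; adding $\lambda \cdot x$ (which vanishes on the constraint surface) and supremizing in $\lambda$ yields the bound. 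Conversely, picking a maximizer $\lambda^\ast \in \partial(\psi^\ast \ast \check\gamma)^\ast(x)$ and setting $\rho^\ast := (\nabla \psi^\ast(\cdot+\lambda^\ast))_\# \gamma$, the subgradient relation gives $\mean{\rho^\ast} = x$, and since $\psi^\ast(\cdot+\lambda^\ast)$ is convex and $\gamma$ is absolutely continuous, $\nabla\psi^\ast(\cdot+\lambda^\ast)$ is itself the Brenier map from $\gamma$ to $\rho^\ast$. A direct computation using the Fenchel--Moreau identity then yields $\rho^\ast(\psi) - \MCov(\rho^\ast,\gamma) = \lambda^\ast \cdot x - (\psi^\ast\ast\check\gamma)(\lambda^\ast) = (\psi^\ast\ast\check\gamma)^\ast(x)$, proving equality. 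Substituting this identity into \Cref{thm:RWMT}\eqref{it:RWMT2} gives \eqref{eq:MBBBT_psi}, and the relations \eqref{eq:SBM_char1}--\eqref{eq:SBM_char2} follow word-for-word from \Cref{thm:RWMT}\eqref{it:RWMT3} under the same substitutions. The main obstacle is the $\ge$ direction of the $C$-transform identity: the weak-duality bound is routine, but producing an exact minimizer requires sufficient regularity of $\psi^\ast$ to apply Brenier to the translated potential $\psi^\ast(\cdot+\lambda^\ast)$, which may necessitate a mollification/approximation argument for general convex $\psi$.
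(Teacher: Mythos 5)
Your high-level strategy coincides with the paper's: set $\widehat C(\rho) = -\MCov(\rho,\gamma)$ and observe that this depends only on $\rho$, verify that the cost is convex and lsc in $\rho$, establish strict convexity of $\widehat C$ via Brenier's theorem (your argument here is essentially word-for-word the paper's), and then invoke Theorem~\ref{thm:RWMT} together with Proposition~\ref{prop:RWMT}\eqref{it:RWMT_A}. Where you genuinely diverge is in the treatment of the $C$-transform, and that is where the gap lies.

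You attempt to prove the \emph{pointwise} identity $g^C = (\psi^\ast\ast\check\gamma)^\ast$. One direction (your Lagrangian bound, which actually shows $g^C \ge (\psi^\ast\ast\check\gamma)^\ast$, not $\le$ as you label it) is sound and is essentially the inner step in the paper's Fenchel computation. The other direction requires exhibiting an exact minimizer $\rho^\ast = (\nabla\psi^\ast(\cdot+\lambda^\ast))_\#\gamma$, and you correctly flag that this is problematic: for a general convex lsc $\psi$ you need $\partial(\psi^\ast\ast\check\gamma)^\ast(x)\neq\emptyset$, you need $\nabla\psi^\ast$ to be defined $\gamma$-a.e., and — more seriously — you need $\rho^\ast \in \cP_2(\R^d)$ with $\psi \in L^1(\rho^\ast)$ so that $\rho^\ast$ is admissible in the infimum defining $g^C$. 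None of these is automatic, and a mollification argument would need to be spelled out in a way that commutes with all the quantities involved. The paper sidesteps this entirely (Proposition~\ref{prop:MB3T-Trafo}): it only computes the Fenchel conjugate $((-\psi)^C)^\ast = \psi^\ast \ast \check\gamma$ by a clean exchange of suprema — no minimizer, no regularity — and concludes $((-\psi)^C)^{\ast\ast} = (\psi^\ast\ast\check\gamma)^\ast$. That weaker identity suffices because $(-\psi)^C$ is convex, and for a finite convex $\vartheta$ the infimal convolution sees only the closed convex hull of its argument, $\vartheta \Box g^C = \vartheta \Box (g^C)^{\ast\ast}$ (cf.\ \eqref{eq:BoxLscHull}), so the dual expression $\mu(\vartheta\Box(\psi^\ast\ast\check\gamma)^\ast)$ in \eqref{eq:MBBBT_psi} is obtained without ever identifying $g^C$ itself. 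Replacing your exact-minimizer step by this double-conjugation observation closes the gap; the rest of your argument goes through as in the paper.
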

Note that if $\gamma=N(0,\id)$, then \eqref{eq:SBM_char2} is equivalent to $\kappa=\law(M_0, M_1)$ for a Bass martingale $M_t=\EE[v(B_1)|B_t]$ where $v=\psi^*$, cf.\ \eqref{eq:Bass_definition} and \cite[Theorem~1.4]{BaBeScTs23}). 

We also point out that for $\vartheta(x)=|x|^2$, \eqref{eq:MBBBT_eta} states that ${\rm MBB}_{\rm reg}(\cdot,\nu)$ is the Yosida regularization of ${\rm MBB}(\cdot,\nu)$ in the metric space $(\cP_2(\R^d),\cW_2)$.

\begin{remark}
The fact that it suffices to take the supremum over convex functions in the dual problem, see \eqref{eq:MBBBT_psi}, follows a more general principle observed by Pramenkovi{\'c} \cite{Pr24}: If the function $\widehat C$ is decreasing w.r.t.\ convex order (i.e.\ if $\rho_1 \le_c \rho_2$, then $\widehat C(\rho_1) \ge \widehat C(\rho_2)$), the dual problem can be restricted to convex functions.\hfill$\diamond$
\end{remark}

\medskip
In order to derive \Cref{thm:SBMbary} from \Cref{thm:RWMT} we need to calculate the $C$-transform associated to the cost associated to ${\rm MBB}$.

\begin{proposition}\label{prop:MB3T-Trafo}
Let $\psi : \R^d \to (-\infty,\infty]$ be convex. Then $(-\psi)^C$ is convex as well and
\[
    ((-\psi)^C)^{\ast\ast} = (\psi^\ast \ast \check\gamma)^\ast.
\]
In particular, $(\psi^\ast \ast \check\gamma)^\ast$ is proper if and only if $(-\psi)^C$ is proper. 
\end{proposition}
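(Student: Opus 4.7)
My approach is to identify the biconjugate by computing the Legendre transform of $(-\psi)^C$ directly and showing it equals $\psi^\ast \ast \check\gamma$. This avoids having to argue lower semicontinuity of $(-\psi)^C$ itself.

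First I unfold the $C$-transform. Since $C(x,\rho) = -\MCov(\rho,\gamma)$ on the fiber $\{\mean{\rho} = x\}$ and $+\infty$ otherwise,
\[
(-\psi)^C(x) = \inf\{\rho(\psi) - \MCov(\rho,\gamma) : \rho \in \cP_p(\R^d),\ \mean{\rho} = x,\ \psi \in \cL^1(\rho)\}.
\]
Because $\cpl(\rho,\gamma)$ depends affinely on $\rho$, the map $\rho \mapsto \MCov(\rho,\gamma) = \sup_{q\in\cpl(\rho,\gamma)} \int y\cdot z\, dq$ is concave (as a supremum of linear functionals after swapping sign), so $\rho \mapsto \rho(\psi) - \MCov(\rho,\gamma)$ is convex. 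Combined with the linearity of $\rho \mapsto \mean{\rho}$, a standard sectional argument shows that $(-\psi)^C$ is convex in $x$.

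Next I compute the Legendre transform. Using $\rho(\lambda \cdot) = \lambda \cdot \mean{\rho}$ and then unfolding $\MCov$,
\[
((-\psi)^C)^\ast(\lambda) = \sup_{\rho}\bigl[\lambda \cdot \mean{\rho} - \rho(\psi) + \MCov(\rho,\gamma)\bigr] = \sup_{q: q_2 = \gamma} \int\bigl[(\lambda + z)\cdot y - \psi(y)\bigr]\, dq(y,z),
\]
where the constraint of prescribed first marginal disappears once $\rho$ itself is free. Disintegrating $q$ against $\gamma$ and using a measurable selector for $\partial \psi^\ast(\lambda + \cdot)$ (with a truncation/approximation to handle non-properness and to justify the interchange of supremum and integral on $\gamma$-null-free sets) yields
\[
((-\psi)^C)^\ast(\lambda) = \int \sup_{y}\bigl[(\lambda + z)\cdot y - \psi(y)\bigr]\, \gamma(dz) = \int \psi^\ast(\lambda + z)\, \gamma(dz) = (\psi^\ast \ast \check\gamma)(\lambda),
\]
where the last equality is the substitution $w = -z$ in the definition of convolution with $\check\gamma = (-\id)_\#\gamma$. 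Taking the conjugate on both sides gives $((-\psi)^C)^{\ast\ast} = (\psi^\ast \ast \check\gamma)^\ast$. The equivalence of properness is then immediate: a convex function and its biconjugate are proper simultaneously.

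The main technical point will be the exchange of supremum and integral in the step identifying $((-\psi)^C)^\ast$ with $\psi^\ast \ast \check\gamma$. I would handle it by a Jankov--von Neumann measurable selection applied to the set-valued map $z \mapsto \partial \psi^\ast(\lambda + z)$ on the interior of $\dom(\psi^\ast)$, and on the complement exploit the affine lower bound on $\psi^\ast$ to see that the integral is unambiguously defined in $(-\infty,+\infty]$. All remaining steps are either direct manipulations of the definitions or standard convex-analytic identities.
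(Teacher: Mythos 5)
Your proof follows essentially the same route as the paper's: unfold the $C$-transform, compute its Legendre conjugate by interchanging the supremum over couplings with the $\gamma$-integral to identify $((-\psi)^C)^\ast = \psi^\ast \ast \check\gamma$, and conjugate once more. The paper leaves the sup/integral interchange and the convexity of $(-\psi)^C$ largely implicit, whereas you spell out the measurable-selection justification and the marginal-function argument for convexity; this is extra care, not a different method.
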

\begin{proof}
Using the definition of the $C$-transform yields
\begin{align*}%\label{eq:hat}
    (-\psi)^C(m) = \inf_{\substack{\rho \in \mathcal P_2(\R^d) \\ \mean{\rho} = m}} \inf_{\pi \in \cpl(\rho,\gamma)} \int \psi(y) -y \cdot  z  \, \pi(dy,dz).
\end{align*}
Next, we calculate the convex conjugate of the $C$-transform
\begin{align*}
    ((-\psi)^C)^{\ast}(x) &= \sup_{m \in \R^d} x \cdot m + \sup_{\substack{\rho \in \mathcal P_2(\R^d) \\ \mean{\rho} = m}} \sup_{\pi \in \cpl(\rho,\gamma)} \int y \cdot  z - \psi(y) \, \pi(dy,dz) \\
    &= \sup_{ \pi \in \cpl(\cdot, \gamma) } \int y \cdot ( z +x) - \psi(y) \, \pi(dy,dz) \\
    &= \int \sup_{y \in \R^d} y \cdot ( z +x) - \psi(y) \,  \gamma(dz) 
    = \int \psi^\ast( z +x) \, \gamma(dz)
    = (\psi^\ast \ast \check\gamma)(x).
\end{align*}
As $\psi$ is convex, all integrals in this calculation are well defined. Taking the convex conjugate on both sides of this equality yields the claim. 
\end{proof}

\begin{proof}[Proof of \Cref{thm:SBMbary}]
We need to observe that the cost function of ${\rm MBB}_{\rm reg}$ is convex in the second argument. Then \Cref{thm:SBMbary} follows directly from \Cref{thm:RWMT} and \Cref{prop:RWMT}, Case \eqref{it:RWMT_A}. Note that \Cref{prop:MB3T-Trafo} gives the explicit expression of the $C$-transform arising \Cref{thm:RWMT}. 

For the strict convexity of the cost, it suffices to prove that $\rho \mapsto \MCov(\gamma,\rho)$ is strictly convex. This follows from Brenier's theorem which asserts uniqueness of the primal optimizer: Suppose that $\rho^1,\rho^2$ are such that $\frac{1}{2}\MCov(\rho^1,\gamma) +  \frac{1}{2}\MCov(\rho^2,\gamma) = \MCov(\frac{1}{2}(\rho^1+\rho^2),\gamma)$. If $\xi^i \in \cpl(\gamma,\rho^i)$, $i \in \{1,2\}$ are optimal, then $\xi = \frac{1}{2}(\xi^1+\xi^2) \in \cpl(\gamma,\frac{1}{2}(\rho^1+\rho^2))$ is optimal. As $\gamma$ is absolutely continuous, Brenier's theorem yields that $\xi$ is supported on the graph of a function. This implies $\xi^1=\xi^2$ and hence $\rho^1=\rho^2$. 
\end{proof}

\subsection{Application to entropic martingale transport}\label{sec:REMOT}

The final section is dedicated to the entropic martingale transport problem and its relaxation.
More specifically, given $\eta,\nu \in \P_1(\R^d)$, a measurable cost $c : \R^d \times \R^d \to [0,\infty)$, and the regularization parameter $\eps > 0$, the entropic martingale transport problem reads as
\begin{equation}
    \label{eq:EMOT}
    {\rm EMT}_{c,\eps} (\eta,\nu) = \inf_{ \kappa \in \cplm(\eta,\nu) } \int c(m,y) \, \kappa(dm,dy) + \eps H(\pi | \eta \otimes \nu),
\end{equation}
which corresponds to the weak transport problem with cost $C : \R^d \times \P_1(\R^d) \to \R \cup \{ +\infty \}$ defined as in \eqref{eq:WMOTcost} where
\begin{equation}
    \label{eq:hatC.EMOT}
    \widehat C(\rho) = \int c(\mean{\rho},y) \, \rho(dy) + \eps H(\rho | \nu).
\end{equation}
In this section, we study the relaxation as proposed in \eqref{eq:WMT-Yosida} where we penalize deviation from the mean according to a convex function $\vartheta : \R^d \to \R$ and study the relaxation
\begin{equation}
    \label{eq:REMOT}
    \inf_{\eta \in \P_1(\R^d)}
    {\rm T}_\vartheta(\mu,\eta) + {\rm EMT}_{c,\eps}(\eta,\nu).
\end{equation}
Throughout this section, we denote by $C_\vartheta$ the associated weak transport cost given by
\begin{equation}
    \label{eq:Ctheta.EMOT}
    C_\vartheta(x,\rho) = \vartheta(x - \mean{\rho}) + \int c(\mean{\rho},y) \, \rho(dy) + \eps H(\rho | \nu).
\end{equation}
Relaxed entropic martingale transport is an instance of a relaxed weak martingale transport problem.
Under the natural assumptions of the next proposition, we have that the structural results of \Cref{thm:RWMT} also hold in the current setting.

\begin{proposition}\label{prop:REMOT}
    Let $\mu,\nu \in \P_p(\R^d)$, let $\vartheta : \R^d \to [0,\infty)$ be convex, and let $c : \R^d \times \R^d \to [0,\infty)$ be lsc.
    Suppose that there are functions $a \in \cL^1(\mu)$ and convex $b \in \cL^1(\nu)$ such that 
    \[
        \vartheta(x-y) \le a(x) + b(y) \quad \text{and} \quad \int c(\mean{\rho},y) \, \rho(dy) \le \rho(b).
    \]
    Then, $\widehat C$ given in \eqref{eq:hatC.EMOT} satisfies the assumptions of \Cref{thm:RWMT}.
    In particular, the conclusions of \Cref{thm:RWMT} hold in the current setting for the relaxed entropic martingale transport problem.
\end{proposition}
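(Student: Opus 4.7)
The plan is to verify that the cost functional
\[
    \widehat C(\rho) = \int c(\mean{\rho},y)\, \rho(dy) + \eps H(\rho|\nu)
\]
satisfies the four structural hypotheses of \Cref{thm:RWMT} imposed on $\widehat C$: namely, (a) lower semicontinuity on $\cP_p(\R^d)$ for the $p$-weak topology, (b) convexity on each fibre $\{\rho : \mean{\rho} = m\}$ of the barycentre map, (c) the boundedness condition \eqref{eq:MWOTbound}, and (d) the continuity condition \eqref{eq:MWOTcont}. Together with the growth hypothesis on $\vartheta$ that is already in the statement, this allows us to invoke \Cref{thm:RWMT} directly to conclude.

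For (a), lsc of $\rho \mapsto H(\rho|\nu)$ is classical. For the barycentric cost, write $c$ as the pointwise supremum of a non-decreasing sequence of bounded continuous functions $c_n$ (possible since $c$ is lsc and non-negative); each $\rho \mapsto \int c_n(\mean{\rho},y)\, \rho(dy)$ is $p$-weakly continuous because the barycentre is a $p$-weakly continuous map and $c_n$ is bounded continuous, and a supremum of continuous functions is lsc. For (b), on each fibre $\{\mean{\rho}=m\}$ the cost integral reduces to the linear functional $\rho \mapsto \int c(m,y)\, \rho(dy)$, while $H(\cdot|\nu)$ is globally convex.

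For (c), take $h(t) := \eps\, t(\log t)_+ + \eps$. This is convex, non-decreasing, non-negative and super-coercive on $[0,\infty)$. Using the elementary estimate $t \log t \le t(\log t)_+$ together with the hypothesis $\int c(\mean{\rho},y)\, \rho(dy) \le \rho(b)$ yields
\[
    \widehat C(\rho) \le \rho(b) + \eps H(\rho|\nu) \le \rho(b) + \int h\Big(\frac{d\rho}{d\nu}\Big)\, d\nu,
\]
which is precisely \eqref{eq:MWOTbound}; on the complement $\{\rho \not\ll \nu\}$ both sides are $+\infty$ by convention.

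For (d), fix Borel sets $Y_k \uparrow \R^d$ and let $\rho_k := \rho|_{Y_k}/\rho(Y_k)$; we may assume $\widehat C(\rho)<\infty$, so $\rho \ll \nu$ with finite entropy and $\int c(\mean{\rho},y)\,\rho(dy) <\infty$. For the entropy part, the argument of \Cref{prop:EOTWOTdual} transfers verbatim: the density $d\rho_k/d\nu$ is dominated by $(\rho(Y_1))^{-1} d\rho/d\nu$ and converges $\nu$-a.s.\ to $d\rho/d\nu$, and since $t\log t \ge -1/e$ is bounded below, dominated convergence gives $H(\rho_k|\nu) \to H(\rho|\nu)$. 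For the barycentric cost, one combines $\mean{\rho_k} \to \mean{\rho}$ with the pointwise convergence of the density and the integrability $\int c(\mean{\rho},\cdot)\, d\rho \le \rho(b) < \infty$ to pass to the limit and obtain $\limsup_k \int c(\mean{\rho_k},y)\, \rho_k(dy) \le \int c(\mean{\rho},y)\, \rho(dy)$. This last step is the main technical obstacle, because $c$ is only lsc and the first argument of $c$ drifts with $k$; one must weave together continuity of the barycentre on $\cP_p$ with the lsc structure of $c$ through a careful dominated-convergence passage to establish the required inequality.
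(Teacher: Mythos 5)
Your steps (a)--(c) are sound. For lower semicontinuity you use a monotone approximation of $c$ by bounded continuous $c_n$ and pass to a supremum, whereas the paper factors $\rho\mapsto\int c(\mean{\rho},y)\,\rho(dy)$ as the composition of the lsc functional $\pi\mapsto\pi(c)$ on $\cP(\R^d\times\R^d)$ with the continuous map $\rho\mapsto\delta_{\mean{\rho}}\otimes\rho$; both are valid. You additionally make the fibre-wise convexity explicit (the paper leaves it tacit), and your choice $h(t)=\eps\,t(\log t)_+ +\eps$ is actually the cleaner one: it is non-negative, convex, and increasing on $[0,\infty)$ as \eqref{eq:MWOTbound} requires, whereas the paper's $h(t)=\eps\,t\log t$ is negative on $(0,1)$ and is implicitly using the same $(\log t)_+$ fix that already appears in the proof of \Cref{prop:EOTWOTdual}.

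The real problem is step (d), and it is precisely the one you flag but do not resolve. You assert that a ``careful dominated-convergence passage'' will yield $\limsup_k\int c(\mean{\rho_k},y)\,\rho_k(dy)\le\int c(\mean{\rho},y)\,\rho(dy)$, but lower semicontinuity of $c$ gives $\liminf_k c(\mean{\rho_k},y)\ge c(\mean{\rho},y)$ --- the wrong direction when the first argument drifts --- and no dominating function is available from the stated hypotheses. In fact the inequality can fail: take $d=1$, $\nu=N(0,1)=\rho$, $Y_k=(-\infty,k]$ (so $\mean{\rho_k}<0=\mean{\rho}$ for every $k$), $c(m,y)=1_{\{m\neq 0\}}$ (lsc and $[0,\infty)$-valued), $b\equiv 1$, $\vartheta\equiv 0$; all hypotheses of the proposition hold, yet $\widehat C(\rho)=0$ while $\widehat C(\rho_k)=1-\eps\log\rho(Y_k)\to 1$, so \eqref{eq:MWOTcont} fails. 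The paper's own proof is no better here: it simply invokes ``the same argument as in the proof of \Cref{prop:EOTWOTdual}'', but that argument keeps the first coordinate of $c$ fixed and does not address the drifting barycentre. What actually closes the gap is the Lipschitz continuity of $c$ in its first variable (then $|\int c(\mean{\rho_k},y)-c(\mean{\rho},y)\,d\rho_k|\le L|\mean{\rho_k}-\mean{\rho}|\to 0$ and dominated convergence finishes), which is exactly what is assumed in \Cref{thm:REMOT} but not in \Cref{prop:REMOT}. So your proof, like the paper's, needs either this extra regularity on $c$ or a different argument at this point.
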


\begin{proof}
    It remains to check that $\widehat C$ satisfies the assumptions of \Cref{thm:RWMT}.
    Since $c$ is non-negative and lsc, the same holds true for the map $\pi \mapsto \pi(c)$ for $\pi \in \P(\R^d \times \R^d)$.
    Furthermore, the map $\rho \mapsto \delta_{\mean{\rho}} \otimes \rho$ is continuous from $\P_1(\R^d)$ to $\P(\R^d \times \R^d)$.
    Hence, $\rho \mapsto \int c(\mean{\rho},y) \, \rho(dy)$ is lsc on $\P_1(\R^d)$ as concatenation of these maps.
    As the relative entropy is lsc on $\P(\R^d) \times \P(\R^d)$, we conclude that
    \[
        \widehat C(\rho) = \int c(\mean{\rho},y) \, \rho(dy) + \eps H(\rho | \nu)
    \]
    is lsc on the domain $\P_1(\R^d)$.
    Letting $h(x) = \eps x \log(x)$, we have that $\widehat C$ satisfies \eqref{eq:MWOTbound}.
    Finally, the continuity property \eqref{eq:MWOTcont} follows by the same argument as in the proof of \Cref{prop:EOTWOTdual}.
\end{proof}

When we assume mild regularity properties of the cost $c$ and $\vartheta$, we are able to derive a more precise structure theorem that incorporates structural aspects of the entropic (martingale) optimal transport, namely that optimal martingale couplings are of Gibbs type.

\begin{theorem} \label{thm:REMOT}
    In addition to the assumptions of \Cref{prop:REMOT} suppose that $c$ is Lipschitz, $\mu$ is absolutely continuous and $\vartheta$ is of the form $\vartheta(x) = \bar\vartheta(|x|)$ for some increasing, strictly convex function $\bar\vartheta : [0,\infty) \to [0,\infty)$.
    Then ${\rm WT}_{C_\vartheta}(\mu,\nu) = \overline{\rm WT}_{C_\vartheta}(\mu,\nu)$ and both problems have a unique minimizer.
    
    Moreover, $\pi \in \cpl(\mu,\nu)$ and $g \in \cL^1(\nu)$ are primal and dual optimizers of ${\rm WT}_{C_\vartheta}(\mu,\nu)$ if and only if there is $\Delta : \R^d \to \R^d$ measurable such that
    \begin{gather} \label{eq:thm.REMOT.subdiff}
        \mean{\pi_x} \in \partial^\vartheta(\vartheta \Box g^C)(x) \quad \mu\text{-a.s.,} \\
        \label{eq:thm.REMOT.gibbs}
        \frac{d\kappa}{d\eta \otimes \nu}(m,y) = \exp\Big( \frac{g^C(m) + g(y) + \Delta(m) \cdot (y - m) - c(m,y)}{\eps} \Big),
    \end{gather}    
    where $\kappa = ((x,y) \mapsto (\mean{\pi_x},y))_\# \pi$ and $\eta = (x \mapsto \mean{\pi_x})_\# \mu$.
\end{theorem}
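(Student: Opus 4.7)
The plan is to combine \Cref{prop:RWMT}(b) with a fiberwise version of the Gibbs density derivation from the proof of \Cref{prop:EOT_opt_density}. First I would verify the hypotheses of \Cref{prop:RWMT}(b): $\mu$ is absolutely continuous, $\vartheta = \bar\vartheta(|\cdot|)$ with $\bar\vartheta$ strictly convex and increasing, and by \Cref{prop:REMOT} the cost $\widehat C$ from \eqref{eq:hatC.EMOT} satisfies the hypotheses of \Cref{thm:RWMT}. This immediately yields ${\rm WT}_{C_\vartheta}(\mu,\nu) = \overline{\rm WT}_{C_\vartheta}(\mu,\nu)$, primal attainment by some $\pi \in \cpl(\mu,\nu)$, and the decomposition \eqref{eq:pi-opt-dec}. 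Uniqueness of the minimizer is obtained from two strict-convexity ingredients: the entropy $\eps H(\cdot|\nu)$ is strictly convex on every fiber $\{\mean q = m\}$, on which $\int c(m,y)\,q(dy)$ is linear, and the strict convexity of $\vartheta$ together with absolute continuity of $\mu$ gives uniqueness of the Monge solution of the outer transport from $\mu$ to the barycentric push-forward $\eta := (x \mapsto \mean{\pi_x})_\# \mu$, ruling out two distinct candidates at either stage of \eqref{eq:pi-opt-dec}.

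Next I would apply \Cref{thm:RWMT}(iv) to the lifted plan $J(\pi) \in \Lambda(\mu,\nu)$: $(\pi,g)$ is primal--dual optimal iff $\mu$-a.e.
\[
    g^{C_\vartheta}(x) = C_\vartheta(x,\pi_x) - \pi_x(g).
\]
Invoking \eqref{eq:C-theta-trafo}, namely $g^{C_\vartheta}(x) = \vartheta \Box g^C(x)$, this splits into (a) the barycenter $\mean{\pi_x}$ minimizes $m \mapsto \vartheta(x-m) + g^C(m)$, which is equivalent to \eqref{eq:thm.REMOT.subdiff}; and (b) setting $m := \mean{\pi_x}$, the fiber measure $\pi_x$ minimizes $\widehat C(q) - q(g)$ over $q \in \cP_1(\R^d)$ with $\mean q = m$. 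Part (a) is already the first conclusion of the theorem.

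The main obstacle is part (b), a fiber-constrained entropic minimization. In the spirit of the perturbation argument from the proof of \Cref{prop:EOT_opt_density}, I would perturb $\pi_x$ along directions $\xi \nu$ with $\int \xi \,d\nu = 0$ and $\int y\,\xi(y)\,d\nu(y) = 0$, where the second constraint encodes preservation of the barycenter; then take the right derivative of $\widehat C - g$ at $t = 0^+$ and conclude that on $\spt(\nu)$ the function $y \mapsto \eps \log(d\pi_x/d\nu)(y) + c(m,y) - g(y)$ is affine in $y$, of the form $a(m) + \Delta(m)\cdot y$, with $\Delta(m)$ the Lagrange multiplier attached to the barycenter constraint. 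Substituting into the optimal value $\widehat C(\pi_x) - \pi_x(g) = g^C(m)$ forces $a(m) + \Delta(m)\cdot m = g^C(m)$ and produces
\[
    \frac{d\pi_x}{d\nu}(y) = \exp\!\Big(\tfrac{1}{\eps}\big[g^C(m) + \Delta(m)\cdot(y-m) - c(m,y) + g(y)\big]\Big),
\]
which is exactly \eqref{eq:thm.REMOT.gibbs} after the identification $\kappa_m = \pi_x$ for $m = \mean{\pi_x}$ coming from \eqref{eq:pi-opt-dec}. Measurability of $m \mapsto \Delta(m)$ is extracted from the jointly measurable density family by solving the linear moment equation $\mean{\pi_x} = m$, with the Lipschitz assumption on $c$ ensuring finiteness of the relevant integrals. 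The converse direction is a direct verification: inserting the Gibbs density into $C_\vartheta(x,\pi_x) - \pi_x(g)$ returns $\vartheta \Box g^C(x) = g^{C_\vartheta}(x)$ pointwise, so \Cref{thm:RWMT}(iv) delivers joint optimality.
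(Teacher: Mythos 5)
The high-level architecture of your proposal matches the paper's proof closely: invoke \Cref{prop:REMOT} and \Cref{prop:RWMT}\eqref{it:RWMT_B} for the reduction and uniqueness, then apply \Cref{thm:RWMT}\eqref{it:RWMT4} and split via $g^{C_\vartheta} = \vartheta \Box g^C$ into the barycentric condition \eqref{eq:thm.REMOT.subdiff} and a fiberwise entropic minimization, and finally extract the Gibbs form by a perturbation argument in the spirit of \Cref{prop:EOT_opt_density}. However, there is one genuine gap.

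The Lagrange-multiplier step has a hidden prerequisite that you cannot skip. To conclude that $y \mapsto \eps\log(d\pi_x/d\nu)(y) + c(m,y) - g(y)$ is affine on $\spt(\nu)$, you perturb $\pi_x$ by $t\xi\nu$ with $\int\xi\,d\nu=0$ and $\int y\,\xi\,d\nu=0$. Since $\pi_x + t\xi\nu$ must remain a probability measure, you can only take \emph{two-sided} perturbations where $d\pi_x/d\nu$ is strictly positive; otherwise you only get a one-sided inequality and the affine conclusion fails. So you need $\nu \ll \pi_x$ (equivalently $\nu \ll \kappa_m$) for $\eta$-a.e.\ $m$, and relatedly that $\mean{\pi_x} = m$ lies in the relative interior of ${\rm co}({\rm supp}(\nu))$ — on the boundary, the barycenter constraint cannot be perturbed in all directions, and the multiplier $\Delta(m)$ need not exist. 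This is precisely what the paper establishes in \Cref{lem:relint}, a non-trivial argument which uses $C$-monotonicity of the optimal lifted plan and a careful one-sided derivative estimate of the entropy along $\rho_t = \rho_0 + t(\rho_1-\rho_0)$. Your proposal omits this entirely.

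Two smaller issues. First, your measurability claim for $\Delta$ ("extracted from the jointly measurable density family by solving the linear moment equation") is not a justification: the Lagrange multiplier is not uniquely determined by the moment equation when $\nu$ has degenerate support, and the family of possible multipliers must be selected measurably. The paper handles this by defining $F(x,m) := \inf\{\rho(c(x,\cdot)-g)+\eps H(\rho|\nu) : \mean{\rho}=m,\ \rho(|g|)<\infty\}$, noting $F(x,\cdot)$ is convex and finite on the relative interior $D$ (again needing \Cref{lem:relint} for $\eta(D)=1$), and then applying the Arsenin--Kunugui selection theorem to pick $\Delta(x)\in\partial F(x,\cdot)(x)$ measurably. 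Second, your direct perturbation within the constraint set is a different route from the paper's value-function approach (which packages the multiplier as a subgradient of $F(x,\cdot)$ rather than extracting it from stationarity); both can be made to work, but the value-function route bundles existence and measurability of $\Delta$ into one selection argument, which is cleaner.
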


\begin{proof}
    First, note that $\widehat C$ is strictly convex on the fibers $\{ \rho \in \P_1(\R^d) : \mean{\rho} = m \}$ for $m \in \R^d$.
    Thanks to \Cref{prop:REMOT} and the additional assumptions on $\mu$ and $\vartheta$, we can invoke \Cref{prop:RWMT}.
    It remains to show the characterization of optimality given in \eqref{eq:thm.REMOT.subdiff} and \eqref{eq:thm.REMOT.gibbs}.
    
    To this end, recall \eqref{eq:def:J} and that we have by \Cref{thm:RWMT} \eqref{it:RWMT4} that $\pi \in \cpl(\mu,\nu)$ and $g \in \cL^1(\nu)$ are primal and dual optimizers if and only if for $J(\pi)$-a.e.\ $(x,\rho)$ we have \eqref{eq:RWMT41} and \eqref{eq:RWMT42}.
    Clearly, \eqref{eq:RWMT41} holds $J(\pi)$-almost surely if and only if \eqref{eq:thm.REMOT.subdiff} holds.
    Hence, it suffices to show that $\kappa = ((x,y) \mapsto (\mean{\pi_x},y))_\# \pi \in \cplm(\eta,\nu)$ is optimal for ${\rm EMT}_{c,\eps}(\eta,\nu)$ if and only if there is $\Delta : \R^d \to \R^d$ measurable such that $\kappa$ is given by \eqref{eq:thm.REMOT.gibbs}.

    On the one hand, if $\kappa$ is as in \eqref{eq:thm.REMOT.gibbs}, then we have that for $\eta$-a.e.\ $m$
    \[
        \kappa_m \in \arg \min \{ \rho(c(m,\cdot) - g) + \eps H(\rho | \nu) - \Delta(m) \cdot (\mean{\rho} - m) : \rho \in \P_1(\R^d), \, \rho(|g|) < \infty \},
    \]
    because $\eps H(\rho | \kappa_m)$ and $\rho(c(m,\cdot) - g) + \eps H(\rho | \nu)  - \Delta(m) \cdot (\mean{\rho} - m)$ differ as a function of $m$ just by a constant.
    Since $\mean{\kappa_m} = m$ it follows now directly that for $\eta$-a.e.\ $m$
    \[
        \kappa_m \in \arg \min \{ \rho(c(m,\cdot) - g) + \eps H(\rho | \nu) : \rho \in \P_1(\R^d), \, \mean{\rho} = m, \, \rho(|g|) < \infty \}.
    \]
    Hence, $g^C = \kappa_m(c(m,\cdot) - g) + \eps H(\kappa_m|\nu)$ for $\eta$-a.e.\ $m$ and we conclude by \Cref{prop:slack} that $\kappa$ is the optimizer of ${\rm EMT}_{c,\eps}(\eta,\nu)$.

    On the other hand, if $\pi \in \cpl(\mu,\nu)$ and $g \in \cL^1(\nu)$ are primal and dual optimizers of ${\rm WT}_{C_\vartheta}(\mu,\nu)$, then $\kappa = ((x,y) \mapsto (\mean{\pi_x},y))_\# \pi \in \cplm(\eta,\nu)$ and $g \in \cL^1(\nu)$ are primal and dual optimizers of ${\rm EMT}(\eta,\nu)$.
    Wlog we assume that $\nu$ is not a Dirac measure.
    In order to find $\Delta$, we define the auxiliary function
    \[
        F(x,m) := \inf \{ \rho(c(x,\cdot) - g) + \eps H(\rho | \nu) : \mean{\rho} = m, \, \rho(|g|) < \infty \}.
    \]
    From the definition, we see that $F(x,\cdot)$ is convex, finite on $D := {\rm relint}({\rm co}({\rm supp}(\nu)))$ (the relative interior of the convex hull of the support of $\nu$), and $F(x,x) = g^C(x)$.    
    It follows from the Arsenin--Kunugui selection theorem (see \cite[Theorem~18.8]{Ke95}) that there exists a measurable map $\Delta :  D \to \R^d$ such that $\Delta(x) \in \partial F(x,\cdot)(x)$, where we extend $\Delta$ to $\R^d$ by setting it $0$ outside of $D$.
    We have for every $x \in D$ and $\rho \in \P_1(\R^d)$ with $\mean{\rho} = m$ and $\rho(|g|) < \infty$ that
    \[
        F(x,x) + \Delta(x) \cdot (m - x) 
        \le F(x,m) \le \rho(c(x,\cdot) - g) + \eps H(\rho | \nu),
    \]
    where we have $\eta$-a.s.\ equality for $\rho = \kappa_x$, since $\eta(D) = 1$ by \Cref{lem:relint}.
    We can proceed as in the first part of the proof of \Cref{prop:EOT_opt_density}, which yields that
    \[
        \frac{d\kappa}{d\eta \otimes \nu} = \exp\Big( \frac{g^C(m) + \ppsi(y) + \Delta(m) \cdot (y-m) - c(m,y)}{\eps} \Big),
    \]
    which completes the proof.
\end{proof}

\begin{lemma} \label{lem:relint}
    Under the assumption of \Cref{prop:REMOT}, the optimizer $P \in \Lambda(\mu,\nu)$ for $\overline{\rm WT}_{C_\vartheta}(\mu,\nu)$ satisfies $\rho \ll \nu$ and $\nu \ll \rho$ for $P$-a.e.\ $(x,\rho)$.
    In particular, the measure $((x,\rho) \mapsto \mean{\rho})_\# P$ is concentrated on the relative interior of ${\rm co}({\rm supp}(\nu))$.
\end{lemma}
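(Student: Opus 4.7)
The plan is to establish the three claims in order: $\rho \ll \nu$, $\nu \ll \rho$, and the concluding push-forward statement.

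The first inclusion $\rho \ll \nu$ is immediate: finiteness of $\overline{\rm WT}_{C_\vartheta}(\mu,\nu)$ forces $C_\vartheta(x,\rho)$, and hence $\eps H(\rho|\nu)$, to be $P$-a.s.\ finite, so $\rho \ll \nu$.

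The heart of the argument is $\nu \ll \rho$, which I would establish by contradiction via a perturbation that exploits the steep slope of $s \mapsto s\log s$ at the origin. By \Cref{thm:FTWOT_mainbody_L} there exists a dual optimizer $g$; complementary slackness together with the $C_\vartheta$-transform identity \eqref{eq:C-theta-trafo} implies that for $P$-a.e.\ $(x,\rho)$ the measure $\rho$ is an unconstrained minimizer of $\rho' \mapsto C_\vartheta(x,\rho') - \rho'(g)$. Set $\phi := d\rho/d\nu$ and $\nu_0 := \nu(\{\phi = 0\})$; the goal is to rule out $\nu_0 > 0$. Consider the competitor $\rho_\delta := (1-\delta)\rho + \delta\nu$ for $\delta \in (0,1)$: on $\{\phi = 0\}$ the $\nu$-density of $\rho_\delta$ equals exactly $\delta$, so splitting the entropy integral over $\{\phi = 0\}$ and $\{\phi > 0\}$ and using the convexity of $s \mapsto s\log s$ on each piece yields the key estimate
\[
H(\rho_\delta|\nu) \le (1-\delta)\,H(\rho|\nu) + \delta\log\delta \cdot \nu_0.
\]
The remaining contributions to $C_\vartheta(x,\rho_\delta) - \rho_\delta(g)$ each change only by $O(\delta)$: convexity of $\vartheta$ controls $\vartheta(x-\mean{\rho_\delta}) \le (1-\delta)\vartheta(x-\mean{\rho}) + \delta\vartheta(x-\mean{\nu})$, the Lipschitz assumption on $c$ (available in the setting of \Cref{thm:REMOT} where this lemma is applied) controls the change in $\int c(\mean{\rho_\delta},y)\,d\rho_\delta$ through the $O(\delta)$ shift of the mean, and $\rho_\delta(g)$ is linear in $\delta$. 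Hence if $\nu_0 > 0$, the sub-linear negative term $\eps\delta\log\delta\cdot\nu_0$ strictly dominates the $O(\delta)$ contributions for $\delta$ small enough, producing a strict decrease of the objective and contradicting minimality of $\rho$.

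For the concluding assertion, once $\rho \sim \nu$ we have $\spt(\rho) = \spt(\nu)$, and a supporting-hyperplane argument shows that the mean of any probability measure lies in the relative interior of the convex hull of its support (otherwise a hyperplane forces the measure onto a proper face, contradicting full support). Hence $\mean{\rho} \in \mathrm{relint}(\mathrm{co}(\spt(\nu)))$ for $P$-a.e.\ $(x,\rho)$, which is the stated push-forward concentration.

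The main obstacle is bounding the $c$-integral contribution of the perturbation by $O(\delta)$, since $\mean{\rho_\delta}$ differs from $\mean{\rho}$ by order $\delta$ and a merely lsc $c$ could produce an uncontrolled change in $\int c(\mean{\rho_\delta},y)\,d\rho_\delta$; the Lipschitz hypothesis on $c$ in the application inside \Cref{thm:REMOT} makes this harmless. Under the weaker lsc-only hypothesis of \Cref{prop:REMOT} one would instead use a mean-preserving perturbation $\rho_\delta = (1-\delta)\rho + \delta\sigma$ with $\sigma \ll \nu$ chosen so that $\mean{\sigma} = \mean{\rho}$ and $\sigma(\{\phi = 0\}) > 0$, paired with a preliminary argument securing $\mean{\rho} \in D$.
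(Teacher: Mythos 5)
Your proof is correct, and it takes a genuinely different route from the paper's.

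Both proofs exploit the same mechanism: the derivative of $s\mapsto s\log s$ is $-\infty$ at $0$, so creating mass on a $\nu$-positive set where $\rho$ vanishes yields a sub-linear gain of order $\delta\log\delta$ that dominates the $O(\delta)$ cost of any sufficiently regular perturbation. The difference lies in where the variational inequality comes from and what perturbation is used. The paper invokes $C$-monotonicity of the optimal lifted plan (via \Cref{cor:Cmonotonicity}): it fixes two points $(x_0,\rho_0),(x_1,\rho_1)$ in the monotone set, perturbs along the segment $\rho_t = \rho_0 + t(\rho_1-\rho_0)$ and its mirror $\rho_{1-t}$, and derives a contradiction with the two-point monotonicity inequality unless $\rho_0$ and $\rho_1$ are mutually absolutely continuous; equivalence with $\nu$ then follows from the marginal condition $\int\rho\,P(d\rho)=\nu$. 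You instead exploit the dual optimizer $g$ from \Cref{thm:FTWOT_mainbody_L} and complementary slackness directly: $\rho$ minimizes $\rho'\mapsto C_\vartheta(x,\rho')-\rho'(g)$, so you can test against the single-point perturbation $\rho_\delta=(1-\delta)\rho+\delta\nu$ and go straight to $\rho\sim\nu$. This is a cleaner, one-point argument that avoids the pairwise $C$-monotonicity machinery, at the cost of invoking the dual optimizer explicitly (which the paper's route also needs, indirectly, to establish $C$-monotonicity). Your closing remark about the $O(\delta)$ control of the $c$-integral requiring the Lipschitz hypothesis is well taken: the paper's own proof of this lemma uses the Lipschitz bound on $c$ for exactly the same reason, even though the lemma as stated refers only to the lsc hypothesis of \Cref{prop:REMOT} — so your observation flags an implicit strengthening that the paper also makes, and your suggested mean-preserving fallback for the lsc case (with the noted circularity caveat) is a fair assessment of what would be needed to repair it. The concluding supporting-hyperplane argument for $\mean{\rho}\in\mathrm{relint}(\mathrm{co}(\spt\nu))$ is standard and correct.
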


\begin{proof}
    Since the reasoning in \Cref{cor:Cmonotonicity} also works for $P \in \Lambda(\mu,\nu)$ that are optimal for $\overline{\rm WT}_{C_\vartheta}(\mu,\nu)$, we find that $P$ is concentrated on a $C$-monotone set $\Gamma \subseteq \R^d \times \P_1(\R^d)$.
    We claim that $\rho_0 \ll \rho_1$ and $\rho_1 \ll \rho_0$.
    As the intensity of the second marginal of $P$ is $\nu$, this yields that $\rho$ is equivalent to $\nu$ $P$-a.s.\ and the assertion of the lemma readily follows.
    
    To this end, fix $(x_i,\rho_i) \in \Gamma$ with $m_i = \mean{\rho_i}$, $H(\rho_i|\nu) < \infty$ and $b \in \cL^1(\rho_i)$, $i \in \{0,1\}$.
    For the sake of contradiction, suppose that $\rho_0$ and $\rho_1$ are not equivalent.
    Next, we define a curve of measures by
    \[
        \rho_t := \rho_0 + t (\rho_1 - \rho_0),
    \]
    where $t \in [0,1]$ and set $m_t = \mean{\rho_t}$.
    By $C$-monotonicity of $\Gamma$, we find the inequality
    \begin{equation}
        \label{eq:47.11}
        C_\vartheta(x_0,\rho_0) + C_\vartheta(x_1,\rho_1) \le C_\vartheta(x_0,\rho_t) + C_\vartheta(x_1,\rho_{1-t}).
    \end{equation}
    We subtract the left-hand side from the right-hand side, divide by $t$ and send $t \searrow 0$.
    Observe
    \begin{equation} \label{eq:est.0}
        \limsup_{t \searrow 0} \frac1t \Big( \big| \vartheta(x_0 - m_t) - \vartheta(x_0 - m_0) \big| + \big| \vartheta(x_1 - m_{1-t}) - \vartheta(x_1 - m_1) \big| \Big) < \infty,
    \end{equation}
    since $\vartheta$ is a real-valued convex function.
    Since $c(\cdot,y)$ is non-negative and Lipschitz for some constant $L \ge 0$, we find that $c(m_t,\cdot) \le c(m_0,\cdot) \wedge c(m_1,\cdot) + L |m_1 - m_0|$ and
    \begin{multline}
        \label{eq:est.1}
        \limsup_{t \searrow 0} \frac1t \Big( \int c(m_t,\cdot) \, d\rho_t - \int c(m_0,\cdot) \, d\rho_0 \Big) \\ 
        \le
        \limsup_{t \searrow 0} \frac1t \Big( \int c(m_t, \cdot) - c(m_0, \cdot) \, d\rho_0 \Big) + \limsup_{t \searrow 0} \int c(m_t,\cdot) d(\rho_1 - \rho_0)\\
        \le L + \rho_1(b) + L |m_1 - m_0| < \infty.
    \end{multline}
    Similarly, we find
    \[
        \limsup_{t \searrow 0} \frac1t \Big( \int c(m_{1-t},\cdot) \, d\rho_{1-t} - \int c(m_1,\cdot) \, d\rho_1 \Big) < \infty. 
    \]
    Finally, we deal with the entropy terms in $C_{\vartheta}$.
    For $(x,y) \in \R$ we have the inequality $\log(y)y - \log(x)x \ge (\log(x) + 1)(y - x)$, which shows that $\log\Big( \frac{d\rho_t }{d\nu} \Big) \vee 0 \in \cL^1(\rho_0 + \rho_1)$.
    On the other hand, $\log(\frac{d\rho_t}{d\nu}) \ge \log(\frac{d\rho_0}{d\nu}) + \log(1 - t)$. Combining these two inequalities yields
    \[
        H(\rho_0| \nu) + \log(1 - t) \le \int \log\Big( \frac{d\rho_t}{d\nu} \Big) d\rho_0 \le H(\rho_0|\nu),
    \]
    and $\log(\frac{d\rho_t}{d\nu}) \in \cL^1(\rho_0)$ for $t \in [0,1)$.
    Hence,
    \begin{align*}
        H(\rho_t|\nu) - H(\rho_0|\nu) &=
        \int \log\Big( \frac{d\rho_t}{d\nu} \Big) - \log\Big( \frac{d\rho_0}{d\nu} \Big) \, d\rho_0 +
        t \int \log\Big( \frac{d\rho_t}{d\nu} \Big)\, d(\rho_1 - \rho_0)
        \\ &\le t \int \log\Big( \frac{d\rho_t}{d\nu} \Big) \, d(\rho_1 - \rho_0)
    \end{align*}
    where we used that $(\log(y) - \log(x))x \le y - x$. Dividing both sides by $t$, we find 
    \[
        \limsup_{t \searrow 0} \Big( H(\rho_t|\nu) - H(\rho_0|\nu) \Big) \le \int \log\Big( \frac{d\rho_0}{d\nu} \Big) \, d(\rho_1 - \rho_0),
    \]
    by dominated convergence.
    We note that the right-hand side is $-\infty$ if $\rho_0$ is not absolutely continuous w.r.t.\ $\rho_1$.
    Similarly, we find that
    \[
        \limsup_{t \searrow 0} H(\rho_{1-t}| \nu) - H(\rho_0|\nu) \le \int \log\Big( \frac{d\rho_1}{d\nu} \Big) \, d(\rho_0 - \rho_1),
    \]
    where the right-hand side is $-\infty$ if $\rho_1$ is not absolutely continuous w.r.t.\ $\rho_0$. 
    Summarizing, using \eqref{eq:47.11} we find the contradiction
    \[
        0 \le \limsup_{t \searrow 0} \frac1t\Big( C_{\vartheta}(x_0,\rho_t) + C_\vartheta(x_1,\rho_{1-t}) - C_\vartheta(x_0,\rho_0) - C_\vartheta(x_1,\rho_1) \Big) \le -\infty,
    \]
    if $\rho_0 \not\ll \rho_1$ or $\rho_1 \not\ll \rho_0$.
    This concludes the proof.
\end{proof}

\subsection{Proof of \Cref{thm:RWMT} and  \Cref{prop:RWMT}}\label{sec:RWMT_proofs}

\begin{proof}[Proof of \Cref{thm:RWMT} \eqref{it:RWMT1}]
We first show that the left-hand side of \eqref{eq:WMOTREG} is bigger than the right-hand side. To this end, let $P \in \Lambda(\mu,\nu)$. We then write $\eta = ((x,\rho) \mapsto \mean{\rho})_\# P$ and define the couplings $\xi = ((x,\rho) \mapsto (x,\mean{\rho}))_\# P$ and $\kappa(dm,dy) = \int \delta_{\mean{\rho}}(dm)\rho(dy) \,P(d\rho)$.
    Observe that $\xi \in \cpl(\mu,\eta)$ and $\kappa \in \cplm(\eta,\nu)$.
    Using convexity of $C$, we find
    \begin{align*}
        \int C_\vartheta  \, dP &= \int \vartheta(x - m) \, d\xi + \int C(\mean{\rho}, \rho)  \, P(dx, d\rho)  \\
        &\ge \int \vartheta(x - m) \, d\xi + \int C(m,\kappa_m) \, \eta(dm) \\
        &\ge {\rm T}_\vartheta(\mu,\eta) + {\rm WMT}_C(\eta, \nu). 
    \end{align*}

    For the reverse inequality, let $\eta \in \cP_p(\R^d)$ and suppose that $\eta \le_c \nu$ (otherwise there is nothing to prove as ${\rm WMT}_C(\eta,\nu)=+\infty$ in this case). Let $\xi \in \cpl(\mu,\eta)$ and $\kappa \in \cplm(\eta,\nu)$ be optimal for ${\rm T}_\vartheta(\mu,\eta)$ and ${\rm WMT}_C(\eta,\nu)$ respectively. We set $P = ((x,m) \mapsto (x,\kappa_m))_\# \xi$ and observe that $P \in \Lambda(\mu,\nu)$. Using $P$, we estimate
    \begin{align*}
        {\rm WT}_{C_\vartheta}(\mu,\nu) &\le \int C_\vartheta \, dP = \int h(x-m) + C(m, \kappa_m)\, \xi(dx,dm) \\
        &= {\rm T}_\vartheta(\mu, \eta) + {\rm WMT}_C(\eta, \nu) \qedhere 
    \end{align*}
\end{proof}
By going through the constructions given in the previous proof and in particular checking the equality cases of the estimates, it is straightforward to derive 
\begin{corollary}\label{cor:tedious_decomposition}
Let $P \in \Lambda(\mu,\nu)$ and consider the following conditions:
\begin{enumerate}[\upshape(a)]
    \item $\eta = ((x,\rho) \mapsto \mean{\rho})_\# P$ is optimal for \eqref{eq:WMOTREG}.
    \item $\xi = ( (x,\rho)\mapsto(x, \mean{\rho}))_\# P$ is optimal for ${\rm T}_\vartheta(\mu,\eta)$ 
    \item $\kappa$ is optimal for ${\rm WMT}_C(\eta,\nu)$,
    \item $\rho = \kappa_{\mean{\rho}} \, P\text{-a.s.}$
\end{enumerate}
We then have: 
\begin{enumerate}[\upshape(i)]
    \item If $P$ satisfies (a)--(d) it is optimal in ${\overline{{\rm WT}}}_{C_\vartheta}(\mu,\nu)$.
    \item If $P$ is optimal in ${\overline{{\rm WT}}}_{C_\vartheta}(\mu,\nu)$, it satisfies (a)--(c)
    \item If $P$ is optimal in ${\overline{{\rm WT}}}_{C_\vartheta}(\mu,\nu)$ and $C(x,\cdot)$ is strictly convex, $P$ satisfies (a)--(d)
    \item There exists an optimizer $P$ for ${\overline{{\rm WT}}}_{C_\vartheta}(\mu,\nu)$ that satisfies (a)--(d)
\end{enumerate}
\end{corollary}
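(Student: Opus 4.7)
The approach is to revisit the chain of inequalities established in the proof of \Cref{thm:RWMT}\eqref{it:RWMT1} and track exactly when each of its three steps is an equality. For any $P \in \Lambda(\mu,\nu)$ with associated $\eta$, $\xi$, $\kappa$ as defined in the statement, that proof exhibited
\begin{align*}
\int C_\vartheta \, dP
&\ge \int \vartheta(x-m)\, d\xi(x,m) + \int C(m,\kappa_m)\,d\eta(m) \\
&\ge {\rm T}_\vartheta(\mu,\eta) + {\rm WMT}_C(\eta,\nu) \\
&\ge {\overline{\rm WT}}_{C_\vartheta}(\mu,\nu).
\end{align*}
The first inequality is a fiberwise Jensen step: disintegrating the law of $(\mean\rho, \rho)$ under $P$ over $\eta$ as $\int Q_m(d\rho)\, \eta(dm)$, one has $\kappa_m = \int \rho \, Q_m(d\rho)$, so convexity of $\widehat C$ on $\{\rho : \mean\rho = m\}$ yields $\widehat C(\kappa_m) \le \int \widehat C(\rho) \, Q_m(d\rho)$. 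The remaining two inequalities are by definition of the respective infima.

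For (ii), assume $P$ is optimal for ${\overline{\rm WT}}_{C_\vartheta}(\mu,\nu)$. Then leftmost and rightmost terms coincide, forcing all three inequalities to be equalities. Equality in the third yields (a); equality in the second yields that $\xi \in \cpl(\mu,\eta)$ attains ${\rm T}_\vartheta(\mu,\eta)$ and $\kappa \in \cplm(\eta,\nu)$ attains ${\rm WMT}_C(\eta,\nu)$, which are (b) and (c). For (iii), if additionally $C(x,\cdot)$ is strictly convex, the equality in the first (Jensen) step forces $Q_m = \delta_{\kappa_m}$ for $\eta$-a.e.\ $m$, i.e.\ $\rho = \kappa_{\mean\rho}$ $P$-a.s., which is (d).

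For (i), assume (a)--(d). Condition (d) turns the first inequality into an equality since $C(\mean\rho,\rho) = C(\mean\rho, \kappa_{\mean\rho})$ $P$-a.s.; (b) and (c) make the second an equality; and (a) makes the third an equality. Hence $\int C_\vartheta\,dP = {\overline{\rm WT}}_{C_\vartheta}(\mu,\nu)$, so $P$ is optimal. For (iv), \Cref{thm:FTWOT_mainbody_L} furnishes some optimal $P_0 \in \Lambda(\mu,\nu)$; by (ii) already proved, the induced $\eta_0 := ((x,\rho) \mapsto \mean\rho)_\# P_0$ is optimal for \eqref{eq:WMOTREG}. Pick optimal $\xi \in \cpl(\mu,\eta_0)$ for ${\rm T}_\vartheta(\mu,\eta_0)$ by classical primal attainment, and optimal $\kappa \in \cplm(\eta_0,\nu)$ for ${\rm WMT}_C(\eta_0,\nu)$ by \Cref{thm:FTWOT_mainbody}. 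Set $P := ((x,m) \mapsto (x,\kappa_m))_\# \xi$. The second half of the proof of \Cref{thm:RWMT}\eqref{it:RWMT1} shows that $\int C_\vartheta \, dP = {\rm T}_\vartheta(\mu,\eta_0) + {\rm WMT}_C(\eta_0,\nu) = {\overline{\rm WT}}_{C_\vartheta}(\mu,\nu)$, so $P$ is optimal, and (a)--(d) hold by construction.

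The main obstacle is a clean justification of the fiberwise Jensen step, since $\widehat C$ need only be convex on fibers of $\rho \mapsto \mean\rho$: one must select a regular conditional distribution $Q_m$ of $\rho$ given $\mean\rho = m$ under $P$ that is concentrated on the fiber, apply convex-on-fibers Jensen to identify $\kappa_m$ as a fiber barycenter, and extract the equality case in the strictly convex situation. Once this ingredient is in place, the rest is bookkeeping of equality cases.
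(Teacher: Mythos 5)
Your proof is correct and takes essentially the same approach as the paper, which states only that the corollary follows by "going through the constructions given in the previous proof and in particular checking the equality cases of the estimates." You have filled in exactly those equality-case checks, including correctly flagging that the fiberwise Jensen step requires disintegrating $P$ over $\eta$ onto the fibers $\{\rho : \mean{\rho}=m\}$ and that strict convexity of $\widehat C$ on fibers is what upgrades (a)--(c) to (a)--(d).
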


\begin{proof}[Proof of \Cref{thm:RWMT} \eqref{it:RWMT2} and \eqref{it:RWMT3}]

We first observe that $C_\vartheta$ satisfies the conditions of \Cref{thm:FTWOT_mainbody_L} (FTWOT, non-convex). It is easy to check that $C$ satisfying \eqref{eq:MWOTcont} implies that $C_\vartheta$ satisfies \eqref{eq:WOTcont}. Moreover, $C_\vartheta$ satisfies the growth condition as \eqref{eq:MWOTbound} yields
\begin{align*}
    C_\vartheta(x,\rho) \le \int \vartheta(x-y) \, \rho(dy) +  \rho(b) + \int h\Big(  \frac{d \rho}{d\nu} \Big) \, d\nu \le a(x) +\rho(2b) +  \int h\Big(  \frac{d \rho}{d\nu} \Big) \, d\nu. 
\end{align*}%\LR{2. ungleichung aktuell $b$ convex für Jensen notwendig, oder?}
Therefore, the non-convex fundamental theorem of weak optimal transport (\Cref{thm:FTWOT_mainbody_L}) yields duality and dual attainment for the weak transport problem with cost $C_\vartheta$. Noting that the $C_\vartheta$-transform is first applying the $C$-transform and then inf-convolving with $\vartheta$ (cf. \eqref{eq:C-theta-trafo}), we obtain \eqref{eq:RegWMOT-Dual}. 

Next, we prove the optimality criterion \eqref{it:RWMT3}. Suppose that \eqref{eq:RWMT3a} and \eqref{eq:RWMT3b} are satisfied. We then have 
\[
{\overline{{\rm WT}}}_{C_\vartheta}(\mu,\nu) \ge \mu(\vartheta \Box g ^C ) + \nu(g) = {\rm T}_\vartheta(\mu,\eta) + {\rm WMT}_C(\eta,\nu) \ge {\overline{{\rm WT}}}_{C_\vartheta}(\mu,\nu),
\]
where the first inequality follows because $(\vartheta\Box g^C,g)$ is an admissible dual pair, the equality is obtained by adding \eqref{eq:RWMT3a} and \eqref{eq:RWMT3b} and the last inequality follows from \eqref{eq:WMOTREG}. Hence, all of these inequalities are in fact equalities, i.e. $\eta$ is optimal in \eqref{eq:WMOTREG} and $g$ is optimal in the dual problem of ${\overline{{\rm WT}}}_{C_\vartheta}(\mu,\nu)$.

Conversely, assume that $g \in \cL^1(\nu)$ and $\eta \in \cP_p(\R^d)$ are optimal. Then we find 
\begin{align}\label{eq:prf:RWMTiii}
    {\rm T}_\vartheta(\mu,\eta) + {\rm WMT}_C(\eta,\nu) = {\overline{{\rm WT}}}_{C_\vartheta}(\mu,\nu)  = \mu(\vartheta \Box g ^C ) -\eta(g^C) + \eta(g^C)   + \nu(g). 
\end{align}
As $ (\vartheta \Box g ^C,g^C)$ and $(g^C, g) $ are admissible in the respective dual problems, 
\[
    \mu(\vartheta \Box g ^C) \le {\rm T}_\vartheta(\mu,\eta) \quad  \text{and} \quad \eta(g^C)   + \nu(g) \le {\rm WMT}_C(\eta,\nu). 
\]
By \eqref{eq:prf:RWMTiii}, these inequalities are in fact equalities.
\end{proof}

\begin{proof}[Proof of \Cref{prop:RWMT}, Case \eqref{it:RWMT_A}]
The key observation is that the assumption $C(x,\rho)=C(\rho)$ with $C$ convex guarantees that $C_\vartheta$ is convex in the second argument. \eqref{eq:WMOTREG2} follows from \eqref{eq:WMOTREG} because 
 ${\overline{{\rm WT}}}_{C_\vartheta}(\mu,\nu) = {\rm WT}_{C_\vartheta}(\mu,\nu)$, cf.\ \eqref{eq:Relax_Eq}. Moreover, \Cref{thm:FTWOT_mainbody}~\eqref{it:FTWOT_main1} guarantees the existence of an optimizer $\pi \in \cpl(\mu,\nu)$.

Now suppose that $C(x,\cdot)$ is strictly convex. Then $C_\vartheta(x,\cdot)$ is strictly convex as well and therefore the optimal $\pi$ is unique. 

It remains to prove the optimality criterion \eqref{eq:pi-opt-dec}. Using strict convexity of $C(x,\cdot)$, \Cref{cor:tedious_decomposition} and the fact that $J(\pi) := (x \mapsto (x, \pi_x))_\#\mu$ is optimal if $\pi$ is optimal (see \Cref{sec:Relaxed_Primal_Problem}), yield the following: $\pi$ is optimal for ${\rm WT}_{C_\vartheta}(\mu,\nu)$ if and only if $J(\pi)$ is optimal for ${\overline{{\rm WT}}}_{C_\vartheta}(\mu,\nu)$ if and only if $J(\pi)$ satisfies the conditions (a)--(d) in \Cref{cor:tedious_decomposition}. It is straightforward to check that the right-hand side of \eqref{eq:pi-opt-dec} is equivalent to the conditions (a)--(d) if $P=J(\pi)$. 
\end{proof}

\begin{proof}[Proof of \Cref{prop:RWMT}, Case \eqref{it:RWMT_B}]
As ${\overline{{\rm WT}}}_{C_\vartheta}(\mu,\nu) \le  {\rm WT}_{C_\vartheta}(\mu,\nu)$, equation \eqref{eq:WMOTREG} yields that 
\[
     {\rm WT}_{C_\vartheta}(\mu,\nu) \ge \min_{\eta \in \cP_p(\R^d)}  {\rm T}_\vartheta(\mu,\eta)  +  {\rm WMT}_C(\eta,\nu).
\]
Let $\eta \in \cP_p(\R^d)$, $\xi \in \cpl(\mu,\eta)$ and $\kappa \in \cplm(\eta,\nu)$ all be optimal. As the assumptions of the Gangbo--McCann theorem are satisfied, $\xi = (\id,T)_\#\mu$ for some Borel map $T$. We set $\pi(dx,dy) = \kappa_{T(x)}(dy)$. Observe that $\pi \in \cpl(\mu,\nu)$ and $\mean{\pi_x}= \mean{\kappa_{T(x)}}=T(x)$. 

\begin{align*}
    {\rm WT}_{C_\vartheta}(\mu,\nu) &\le \int \vartheta(x-\mean{\pi_x}) + C(\mean{\pi_x}, \pi_x) \, \mu(dx) \\
    & \le \int \vartheta(x-T(x)) \, \mu(dx) + \int C(m, \kappa_m) \, \eta(dm) \\
    &= {\rm T}_\vartheta(\mu,\eta) + {\rm WMT}_C(\eta,\nu).
\end{align*}
This proves \eqref{eq:WMOTREG} and attainment for $ {\rm WT}_{C_\vartheta}(\mu,\nu)$.

In the case that $C(x,\cdot)$ is strictly convex, the proof of uniqueness of $\pi$ and \eqref{eq:pi-opt-dec} follow line by line as in the proof of \Cref{prop:RWMT}, Case \eqref{it:RWMT_A}. 
\end{proof}
\begin{proof}[Proof of \Cref{thm:RWMT} \eqref{it:RWMT4}]
Suppose that $P \in \Lambda(\mu,\nu)$ and $g \in \cL^1(\nu)$ are optimal. By \Cref{cor:tedious_decomposition}, $\eta = ((x,\rho) \mapsto \mean{\rho})_\# P$ is optimal for \eqref{eq:WMOTREG}, 
     $\xi = ( (x,\rho)\mapsto(x, \mean{\rho}))_\# P$ is optimal for ${\rm T}_\vartheta(\mu,\eta)$, and $\kappa$ is optimal for ${\rm WMT}_C(\eta,\nu)$. By \Cref{thm:RWMT} \eqref{it:RWMT3}, $(\vartheta \Box g^C, g^C)$ is a dual optimizer for ${\rm T}_\vartheta(\mu,\eta)$ and $(g^C,g)$ is a dual optimizer for ${\rm WMT}_C(\eta,\nu)$. Hence, \eqref{eq:RWMT41} follows from the complementary slackness criterion for ${\rm T}_\vartheta(\mu,\eta)$ and \eqref{eq:RWMT42} follows from the complementary slackness criterion for ${\rm WMT}_C(\eta,\nu)$.
     
     Conversely, suppose that $P \in \Lambda(\mu,\nu)$ and $g \in \cL^1(\nu)$ satisfy \eqref{eq:RWMT41} and \eqref{eq:RWMT42}. Adding up these conditions, implies that $P(dx,d\rho)$-a.s.
     \[
     (\vartheta \Box g^C)(x) + \rho(g) = \widehat C(\rho) + \vartheta(x-\mean{\rho}).
     \]
     This is precisely the complementary slackness condition for ${\overline{{\rm WT}}}_{C_\vartheta}(\mu,\nu)$. Hence, \Cref{thm:FTWOT_mainbody_L} \eqref{it:FTWOT_main4_L} yields optimality of $P$ and $g$. 
\end{proof}

\appendix

\section{Auxiliary results from convex analysis}\label{sec:convex_ana} 
The aim of this appendix is to give a brief recap of results from convex analysis (for more details see e.g.\ \cite{BaCo11, Ro70, BoVa10}) and to derive a few auxiliary results that were used throughout. 
\subsection{Convex functions: subdifferential, conjugation and infimal convolution}\label{sec:app1}
A function $f: \R^d \to \R \cup \{+\infty \}$ is convex if $f((1-t)x+ty) \le (1-t)f(x)+tf(y)$ for all $x,y \in \R^d$ and $t \in [0,1]$. The domain of a function $f$ is $\dom(f) = \{x \in \R^d : f(x)<\infty \}$. Moreover, $f$ is called proper if $\dom(f) \neq \emptyset$ and finite if $\dom(f)=\R^d$. Every convex function is continuous in the interior of its domain. 

The convex conjugate (or Legendre transform) of a proper function $f: \R^d \to \R \cup \{+\infty\}$ is defined as
\begin{align}\label{eq:def_legendre}
f^\ast(y) = \sup_{x \in \R^d} \{ \langle x,y \rangle - f(x) \}.
\end{align}
Note that $f^{\ast\ast}$ is the largest lsc convex function that is smaller than $f$. The Fenchel--Moreau theorem states that $f = f^{\ast\ast}$ if and only if $f$ is convex and lsc. 

The subdifferential of a convex function $f: \R^d \to \R \cup \{+\infty\}$ at $x \in \R^d$ is 
\[
\partial f (x) = \{y \in \R^d : f(x') \ge f(x) + \langle x'-x,y \rangle \text{ for every } x' \in \R^d \}.
\]
By the Hahn--Banach theorem, $\partial f (x) \neq \emptyset$ for every $x$ in the interior of  $\dom(f)$.
The Fenchel--Young inequality states that $\langle x,y \rangle \le f(x) +f^\ast(y)$ for every $x,y \in \R^d$.
Moreover, $y \in \partial f(x)$ if and only if $(x,y)$ is an equality case in the Fenchel--Young inequality if and only if $x \in \partial f^\ast(y)$.
Further note that there is a Borel selector of $\partial f$ (that is a Borel function $g : \{ x \in \R^d : \partial f(x) \neq \emptyset\} \to \R^d$ such that $g(x) \in \partial f(x)$). This can be seen using the Arsenin--Kunugui selection theorem, see e.g.\ \cite[Theorem~18.8]{Ke95}.

Note that a convex function $f$ is Frechet-differentiable in $x \in \dom(f)$ if and only if it is Gateaux-differentiable in $x$ if and only if $\partial f(x)$ is a singleton, see e.g.\ \cite[Corollary~17.34]{BaCo11}. Moreover, if $f$ is differentiable on an open set $U \subset \dom(f)$, then $f \in C^1(U)$, see e.g.\ \cite[Corollary~17.34]{BaCo11}.

A convex function $f: \R^d \to \R \cup \{+\infty\}$ is called supercoercive if 
\begin{align}\label{eq:def:supercoerive}
    \lim_{|x| \to +\infty} \frac{f(x)}{|x|} = +\infty.
\end{align}

A convex function $f: \R^d \to \R \cup \{+\infty\}$ is supercoercive if and only if $f^\ast$ is finite, see e.g.\ \cite[Proposition~3.5.4]{BoVa10}. Given a finite supercoercive convex function $f$, we have that $f$ is strictly convex if and only if $f^\ast \in C^1(\R^d)$, see e.g.\ \cite[Corollary~18.12]{BaCo11}.

The infimal convolution of two proper functions $f, g : \R^d \to \R \cup \{+\infty\}$ is
\begin{align}\label{eq:def:infconv}
    (f\Box g)(x):=\inf_{y\in\R^d}\{f(x-y) + g(y)\}.
\end{align}
We then have 
\begin{align}\label{eq:BoxDualPlus}
  (f \Box g)^\ast = f^\ast + g^\ast \qquad \text{and} \qquad (f+g)^\ast = f^\ast \Box g^\ast,
\end{align}
where the second assertion is only true provided that $f,g$ are convex lsc and that intersection between $\dom(f)$ and the interior of $\dom(g)$ is not empty, see e.g.\ \cite[Proposition~13.21, Theorem~15.3]{BaCo11}.

Let $f : \R^d \to \R \cup \{+\infty \}$ be convex and lsc. Then $f \Box \frac{1}{2} | \cdot |^2$ is the Moreau-Yosida approximation of $f$. For $x \in \R^d$, the unique minimizer in the definition of $f \Box \frac{1}{2} | \cdot |^2(x)$ is called ${\rm Prox}_f x$, i.e.\
$
    \frac{1}{2}|\cdot|^2 \Box f(x) = \frac{1}{2}|x - {\rm Prox}_fx|^2 + f({\rm Prox}_fx).
$
The map $x \mapsto {\rm Prox}_f x$ is called proximal operator of $f$. Both ${\rm Prox}_f$ and $\id - {\rm Prox}_f$ are 1-Lipschitz, see e.g.\ \cite[Section 12.4]{BaCo11}.

If $f,g$ are proper convex lsc, and one of them is finite, differentiable and supercoercive, then $f \Box g \in C^1(\R^d)$, see e.g.\ \cite[Corollary~18.8]{BaCo11}. If $f: \R^d \to \R$ is continuous and $g : \R^d \to \R \cup \{ +\infty\}$ is convex, replacing it by its lsc envelope, does not affect their infimal convolution (see e.g.\ \cite[Exercise~12.10]{BaCo11}). Specifically, we have 
\begin{align}\label{eq:BoxLscHull}
    f \Box g = f \Box g^{\ast\ast}.
\end{align}

In the context of the growth condition \eqref{eq:WOTbound}, we often work with convex functions $h: [0,\infty) \to \R \cup \{+\infty\}$. These functions fit into the framework by extending them with the value $+\infty$ to a function on the whole real line. It is easy to see from \eqref{eq:def_legendre} that for such functions $h$, their convex conjugate $h^\ast$ is increasing as supremum of increasing functions. In the same spirit as the equivalence between supercoercivity and finiteness of the convex conjugate, we have the following: 
\begin{lemma}\label{lem:superlinear}
A lsc convex function $h: [0,\infty) \to [0,\infty)$ satisfies $\lim_{x \to +\infty} h^{\ast}(x)/x = +\infty$ if and only if $h(x) < \infty$ for all sufficiently large $x$. 
\end{lemma}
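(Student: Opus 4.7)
The approach is to work directly from the defining formula
\[
h^\ast(y) = \sup_{x \geq 0}\bigl(xy - h(x)\bigr),
\]
which immediately yields the pointwise lower bound $h^\ast(y) \geq x_0 y - h(x_0)$ for every $x_0 \in \dom(h)$. Both directions of the biconditional will follow from this inequality together with convexity of $h$ (which forces $\dom(h) \subseteq [0,\infty)$ to be an interval).

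For the direction ($\Leftarrow$), suppose $h$ is finite on some tail $[a,\infty)$. Fixing $x_0 \geq a$ and dividing the above bound by $y > 0$ gives
\[
\frac{h^\ast(y)}{y} \;\geq\; x_0 \,-\, \frac{h(x_0)}{y},
\]
so $\liminf_{y \to \infty} h^\ast(y)/y \geq x_0$. Since $x_0$ may be chosen arbitrarily large by hypothesis, I conclude $h^\ast(y)/y \to +\infty$. (Note that under the stated hypothesis $h:[0,\infty)\to[0,\infty)$ the condition of this direction is automatic, so this already proves the lemma as stated; I include the converse because it is the substantive content when $h$ is allowed to take the value $+\infty$.)

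For the converse I will argue by contraposition. If $h(x) = +\infty$ for some $x$, then convexity of $h$ forces $\dom(h)$ to be a bounded interval, say $\dom(h) \subseteq [0,b]$ with $b < \infty$. The supremum in the Legendre transform is then effectively restricted to $[0,b]$, and using $h \geq 0$ yields the linear upper bound
\[
h^\ast(y) \;=\; \sup_{x \in [0,b]}\bigl(xy - h(x)\bigr) \;\leq\; b\,y,
\]
for every $y \geq 0$. Hence $h^\ast(y)/y \leq b$ stays bounded, contradicting supercoercivity.

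I do not anticipate a substantive obstacle: each direction reduces to a single elementary estimate on the Legendre transform, the only minor subtlety being the use of convexity to conclude that $\dom(h)$ is an interval (so that ``not eventually finite'' forces ``bounded domain'').
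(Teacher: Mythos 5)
Your approach is genuinely different from, and more elementary than, the paper's. You argue directly from the defining supremum $h^\ast(y) = \sup_{x\ge 0}(xy - h(x))$, using only the pointwise inequality $h^\ast(y) \ge x_0 y - h(x_0)$ and the fact that $\dom(h)$ is an interval. The paper instead sandwiches $h$ and $h^\ast$ between functions of the form $\alpha + \beta x + \chi_I(x)$ and transfers these bounds across Legendre duality via $h = h^{\ast\ast}$. Both arguments are short; yours avoids any explicit computation of conjugates and is arguably cleaner. Your observation that the lemma is vacuous as literally stated (codomain $[0,\infty)$) and becomes substantive only when $h$ is allowed to take $+\infty$ is also accurate and worth noting, since the surrounding text of the appendix explicitly works with $h:[0,\infty)\to\R\cup\{+\infty\}$.

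There is, however, a small logical slip in the contrapositive direction. The statement ``$h(x)=+\infty$ for some $x$'' is not the negation of ``$h(x)<\infty$ for all sufficiently large $x$'', and it does \emph{not} force $\dom(h)$ to be bounded: the convex lsc function with $h\equiv+\infty$ on $[0,1)$ and $h\equiv 0$ on $[1,\infty)$ takes the value $+\infty$ somewhere yet has unbounded domain. The correct negation is that $h(x)=+\infty$ for \emph{arbitrarily large} $x$; since $\dom(h)$ is an interval contained in $[0,\infty)$ whose complement then contains arbitrarily large points, the interval is bounded above, hence $\dom(h)\subseteq[0,b]$, and your estimate $h^\ast(y)\le by$ goes through. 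With this one-line correction the proof is complete.
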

\begin{proof}
If $\lim_{x \to +\infty} h^{\ast}(x)/x < +\infty$, there are $\alpha,\beta >0$ such that $h^{\ast}(x) \le \chi_{[\alpha,\infty)}(x) + \beta x$. Taking the convex conjugation of this inequality yields, $h(x) = h^{\ast\ast}(x) \ge -\alpha (x-\beta) + \chi_{(-\infty,\beta]}(x)$, contradicting $h(x) < \infty$ for all sufficiently large $x$. 

Conversely, if $h(x)=+\infty$ for all $x \ge \gamma$, then $h(x) \ge \alpha + \beta x + \chi_{[\gamma,\infty)}$. By taking the convex conjugate, we have $h^\ast(x) \le -\alpha - \beta\gamma + \gamma x + \chi_{(-\infty,\beta]}(x)$ and hence $\lim_{x \to +\infty} h^{\ast}(x)/x \le \gamma < \infty$.
\end{proof}

\subsection{Support functions, convex sets and cones}\label{sec:app2}
A support function is a convex function $h : \R^d \to \R \cup \{+\infty \}$ that is positively homogeneous, i.e.\ $h(tx)=th(x)$ for every $x \in \R^d$ and $t \ge 0$. Note that support functions are subadditive, i.e., $h(x+y) \le h(x) + h(y)$.

There is a one-to-one correspondence between support functions and closed convex subsets of $\R^d$. The support function of a closed convex set $K \subset \R^d$ is given by
\begin{align}\label{eq:hK_app}
    h_K(x)=\sup_{y \in K} x \cdot y.
\end{align}
Note that we have 
$
    \partial h_K(0) =K.
$
As $h = h_{\partial h(0)}$ and $\partial h(0)$ is compact convex, every support function arises as in \eqref{eq:hK_app}. Moreover, $h_K$ is finite if and only if $K$ is compact. 

The convex indicator of a convex set $C \subset \R^d$ is given by $\chi_K(x)=0$ if $x \in K$ and $\chi_K(x)=+\infty$ if $x \notin K$. Note that $\chi_K$ is lsc if and only if $K$ is closed. In this case we have $\chi_K^\ast = h_K$ and $h_K^\ast = \chi_K$. Using \eqref{eq:BoxDualPlus} it follows that $h_{K \cap L} = h_K \Box h_L$ for $K,L \subset \R^d$ convex.

\begin{lemma}\label{lem:RangePartialBox}
Let $K \subset \R^d$ be a compact convex set and $\phi: \R^d \to \R \cup \{+\infty\}$ be a convex function. Then $\partial (h_K \Box \phi)(x) \subseteq K$ for every $x \in \R^d$. 
\end{lemma}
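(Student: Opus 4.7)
The plan is to use the subadditivity of the support function $h_K$ together with the defining inequality of the subdifferential to conclude that any element of $\partial(h_K \Box \phi)(x)$ satisfies the defining inequality for membership in $\partial h_K(0) = K$.

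Concretely, fix $x \in \R^d$ and let $z \in \partial(h_K \Box \phi)(x)$. In particular $(h_K \Box \phi)(x)$ is finite, and for every $w \in \R^d$ the subdifferential inequality (applied at $x' = x + w$) gives
\begin{equation*}
    (h_K \Box \phi)(x + w) \ge (h_K \Box \phi)(x) + \langle z, w \rangle.
\end{equation*}
On the other hand, using subadditivity of the support function $h_K$, namely $h_K(x + w - y) \le h_K(x - y) + h_K(w)$ for every $y \in \R^d$, and passing to the infimum in $y$, one obtains
\begin{equation*}
    (h_K \Box \phi)(x + w) \le (h_K \Box \phi)(x) + h_K(w).
\end{equation*}

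Combining the two displays and cancelling the finite term $(h_K \Box \phi)(x)$ yields $\langle z, w \rangle \le h_K(w)$ for every $w \in \R^d$. This is precisely the statement that $z$ lies in the subdifferential of $h_K$ at the origin, i.e.\ $z \in \partial h_K(0) = K$, as recalled in the preceding paragraph. Since $w$ was arbitrary and $K$ is closed and convex, this concludes the proof.

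I do not expect a substantial obstacle here: the only subtle point is ensuring that $(h_K \Box \phi)(x)$ is finite (so that cancellation is legitimate), which is automatic from $\partial(h_K\Box\phi)(x)\neq\emptyset$. Everything else is a direct manipulation combining subadditivity of $h_K$ with the definition of the subgradient and the identity $\partial h_K(0) = K$.
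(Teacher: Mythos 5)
Your proof is correct, and it takes a genuinely different route from the paper's. The paper applies the subdifferential inequality $h_K\Box\phi(z) \ge h_K\Box\phi(x) + \langle y, z-x\rangle$ and then passes to convex conjugates on both sides, using $(h_K\Box\phi)^\ast = \chi_K + \phi^\ast$ and evaluating the resulting inequality at $w = y$ to deduce $\chi_K(y) < \infty$. You instead exploit the translation estimate
\[
(h_K\Box\phi)(x+w) \le (h_K\Box\phi)(x) + h_K(w),
\]
which is the direct consequence of subadditivity of $h_K$ (equivalently, $h_K$ acting like its own modulus of continuity), combine it with the subgradient lower bound, cancel the finite value $(h_K\Box\phi)(x)$, and read off $\langle z, w\rangle \le h_K(w)$ for all $w$, i.e.\ $z \in \partial h_K(0) = K$. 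Your argument is more elementary, staying entirely at the level of primal inequalities and avoiding the Legendre transform and the formula for $(h_K\Box\phi)^\ast$; the paper's version instead makes explicit the mechanism ``the conjugate of a sup-convolution with $h_K$ carries the indicator $\chi_K$,'' which is more in the spirit of the surrounding conjugation identities in the appendix. You correctly flag the one delicate point, namely that $(h_K\Box\phi)(x)$ must be finite for the cancellation, and correctly resolve it from nonemptiness of the subdifferential.
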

\begin{proof}
Let $x \in \R^d$ and $y \in \partial(h_K \Box \phi)(x)$. Then the subdifferential inequality yields for every $z \in \R^d$
\[
h_K \Box \phi(z) \ge h_K \Box  \psi (x) + \langle y, z - x \rangle.
\]
By considering both sides of the inequality as a function of $z$ and applying the convex conjugate, we find that for every $w \in \R^d$
\[
\chi_K(w) + \phi^\ast(w) \le - h_K \Box  \psi (x) + \langle x,y \rangle + \chi_{\{y\}}(w).
\]
As the right hand side of this inequality yields a finite value at $w=y$, the left-hand side does as well. Hence, $\chi_K(y) < \infty$, i.e.\ $y \in K$.
\end{proof}
The following characterization of convex functions that arise as infimal convolution with a given support function is straightforward to prove. 
\begin{lemma}\label{lem:RangeOfBox}
Let $\phi: \R^d \to \R \cup \{+\infty\}$ be convex and $h: \R^d \to \R \cup \{+\infty \}$ be a support function. Then the following are equivalent:
\begin{enumerate}[(i)]
    \item $\phi=h\Box \phi$
    \item $\phi = h \Box \psi$ for some convex function $\psi$
    \item $\phi(x_1) - \phi(x_2) \le h(x_1-x_2)$ for all $x_1,x_2 \in \R^d$ 
    \end{enumerate}
\end{lemma}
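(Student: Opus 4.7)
The plan is to prove the implications (i) $\Rightarrow$ (ii) $\Rightarrow$ (iii) $\Rightarrow$ (i), using two structural properties of the support function $h$: namely, $h(0)=0$ (since $h=h_K$ for the compact convex set $K=\partial h(0)$) and subadditivity, $h(a+b)\le h(a)+h(b)$ for all $a,b\in\R^d$.

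The implication (i) $\Rightarrow$ (ii) is immediate, taking $\psi:=\phi$. For (ii) $\Rightarrow$ (iii), fix $x_1,x_2\in\R^d$ and $\eps>0$. By the definition of infimal convolution, I can pick $y_\eps\in\R^d$ with $h(x_2-y_\eps)+\psi(y_\eps)\le \phi(x_2)+\eps$ (the case $\phi(x_2)=+\infty$ is trivial). Writing $x_1-y_\eps=(x_1-x_2)+(x_2-y_\eps)$, subadditivity of $h$ gives
\[
\phi(x_1) \le h(x_1-y_\eps)+\psi(y_\eps) \le h(x_1-x_2)+h(x_2-y_\eps)+\psi(y_\eps)\le h(x_1-x_2)+\phi(x_2)+\eps,
\]
and letting $\eps\downarrow 0$ yields (iii).

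For (iii) $\Rightarrow$ (i), I need the two inequalities $h\Box\phi\le\phi$ and $\phi\le h\Box\phi$. The first follows by choosing $y=x$ in the infimum defining $(h\Box\phi)(x)$ and using $h(0)=0$. The second is precisely (iii): since $\phi(x)\le h(x-y)+\phi(y)$ for every $y\in\R^d$, taking the infimum over $y$ yields $\phi(x)\le(h\Box\phi)(x)$.

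There is no real obstacle here — the proof is elementary once the two facts about support functions (namely $h(0)=0$ and subadditivity) are in hand, both of which are standard and already recorded in Section A.2 of the excerpt. The only minor subtlety is in (ii) $\Rightarrow$ (iii) to avoid trouble in case the infimum defining $\phi(x_2)$ is not attained, which is dealt with by the $\eps$-approximation above.
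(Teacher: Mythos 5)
Your proof is correct. The paper itself omits the argument (it records the lemma as ``straightforward to prove''), and what you wrote is exactly the standard argument, resting on the two facts $h(0)=0$ and subadditivity of $h$ recorded in the appendix; the cycle (i)$\Rightarrow$(ii)$\Rightarrow$(iii)$\Rightarrow$(i) with the $\eps$-approximation in (ii)$\Rightarrow$(iii) is the intended route.
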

From this we immediately derive
\begin{corollary}\label{cor:RangeBoxIntersec}
Let $\phi: \R^d \to \R \cup \{+\infty\}$ be convex and $h_1,h_2: \R^d \to \R \cup \{+\infty \}$ be support functions and write $h=h_1 \Box h_2$. Then there is a convex function $\psi$ such that $\phi = \psi \Box h$ if and only if there are convex functions $\psi_1,\psi_2$ such that $\phi = h_1 \Box \psi_1$ and $\phi = h_2 \Box \psi_2$.
\end{corollary}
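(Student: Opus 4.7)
My plan is to reduce everything to the characterization in \Cref{lem:RangeOfBox}, in particular the subadditivity-style condition (iii), combined with the fact that infimal convolution is associative and that $h_1\Box h_2$ is itself a support function. Note that since $h_i = h_{K_i}$ for compact convex $K_i$, we have $h = h_1\Box h_2 = h_{K_1\cap K_2}$, so $h$ is a support function and hence \Cref{lem:RangeOfBox} applies to $h$ as well.

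For the forward direction, suppose $\phi = \psi \Box h$ for some convex $\psi$. Using associativity and commutativity of infimal convolution,
\begin{equation*}
    \phi = \psi \Box h_1 \Box h_2 = h_1 \Box (\psi \Box h_2) = h_2 \Box (\psi \Box h_1),
\end{equation*}
so setting $\psi_1 := \psi \Box h_2$ and $\psi_2 := \psi \Box h_1$ (which are convex as infimal convolutions of convex functions) gives the required representations.

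For the backward direction, suppose $\phi = h_1 \Box \psi_1 = h_2 \Box \psi_2$. By \Cref{lem:RangeOfBox}(iii) applied to each $h_i$, we have
\begin{equation*}
    \phi(x_1) - \phi(x_2) \le h_i(x_1 - x_2) \qquad \text{for all } x_1, x_2 \in \R^d,\ i \in \{1,2\}.
\end{equation*}
Now fix $x_1, x_2 \in \R^d$ and $y \in \R^d$, and interpolate via $z := x_2 + y$:
\begin{equation*}
    \phi(x_1) - \phi(x_2) = \bigl( \phi(x_1) - \phi(z) \bigr) + \bigl( \phi(z) - \phi(x_2) \bigr) \le h_1(x_1 - x_2 - y) + h_2(y).
\end{equation*}
Taking the infimum over $y \in \R^d$ yields $\phi(x_1) - \phi(x_2) \le (h_1 \Box h_2)(x_1 - x_2) = h(x_1 - x_2)$, and invoking \Cref{lem:RangeOfBox} in the reverse direction produces the desired convex $\psi$ with $\phi = h \Box \psi$.

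The only genuine subtlety is that the telescoping step requires $\phi(z)$ to be finite, so the case where $\phi(z) = +\infty$ must be handled separately. Here one uses that $\phi = h_1\Box \psi_1$ together with subadditivity of $h_1$: if $\phi(x_2)<\infty$ then there exists $y_0$ with $h_1(x_2 - y_0) + \psi_1(y_0)<\infty$, and if additionally $\phi(x_1) = \infty$ then necessarily $h_1(x_1 - y_0) = \infty$, whence $h_1(x_1 - x_2)=\infty$ by subadditivity, making the target inequality trivial. This is the only mildly delicate point; once it is dispatched, the proof is essentially algebraic.
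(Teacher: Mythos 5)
Your forward direction is correct and is the natural argument: associativity of infimal convolution. For the backward direction you route through characterization (iii) of \Cref{lem:RangeOfBox}, which is a valid route but more computational than what the paper seems to intend. A cleaner "immediate" derivation uses characterization (i): from $\phi = h\Box\phi = h_1\Box(h_2\Box\phi) \le h_1\Box\phi \le \phi$ (using monotonicity of $\Box$ in each argument, $h_2\Box\phi\le\phi$ and $h_1(0)=0$) one gets $\phi = h_1\Box\phi$, and symmetrically $\phi = h_2\Box\phi$; the converse is the one-line computation $h\Box\phi = h_1\Box(h_2\Box\phi) = h_1\Box\phi = \phi$.

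However, your handling of the "$\phi$ takes the value $+\infty$" subtlety contains a genuine error. Establishing $h_1(x_1-x_2)=\infty$ does \emph{not} make the target inequality $\phi(x_1)-\phi(x_2)\le (h_1\Box h_2)(x_1-x_2)$ trivial: the infimal convolution of two $\{+\infty\}$-valued support functions can be finite where both factors are infinite. For instance on $\R$, with $h_1 = \chi_{[0,\infty)}$ and $h_2 = \chi_{(-\infty,0]}$ one has $h_1(-1)=h_2(1)=\infty$ yet $h_1\Box h_2 \equiv 0$. So the conclusion you want does not follow from $h_1(x_1-x_2)=\infty$ (nor, for the same reason, from additionally proving $h_2(x_1-x_2)=\infty$). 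The correct and simpler fix works pointwise in $y$ inside the infimum: fix $x_2$ with $\phi(x_2)<\infty$ and set $z = x_2+y$. If $\phi(z)<\infty$ your telescoping gives $\phi(x_1)-\phi(x_2)\le h_1(x_1-z)+h_2(z-x_2)$; if $\phi(z)=\infty$, then the Lipschitz bound $\phi(z)-\phi(x_2)\le h_2(z-x_2)$ already forces $h_2(z-x_2)=\infty$, so the bound $\phi(x_1)-\phi(x_2)\le h_1(x_1-x_2-y)+h_2(y)$ holds trivially. In both cases the per-$y$ inequality holds, and taking the infimum over $y$ gives the claim; the case $\phi(x_1)=\infty$ needs no separate treatment, as the same dichotomy shows $h_1(x_1-x_2-y)+h_2(y)=\infty$ for every $y$.
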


A cone is a convex set $C \subset \R^d$ such that for every $t \ge 0$ and $x \in C$, we have $tx \in C$. A cone is called pointed if it contains no non-trivial subspace. There is a one-to-one correspondence between pointed cones and partial orders on $\R^d$ that are compatible with the linear structure.

\bibliographystyle{abbrv}%plain
\bibliography{joint_biblio}

\end{document}